\newtheorem{theorem}{Theorem}[section]
\newtheorem{lemma}[theorem]{Lemma}
\newtheorem{definition}[theorem]{Definition}
\newtheorem*{theorem*}{Theorem}
\newtheorem*{lemma*}{Lemma}
\newtheorem*{remark*}{Remark}
\newtheorem*{definition*}{Definition}
\newtheorem*{proposition*}{Proposition}
\newtheorem*{corollary*}{Corollary}
\numberwithin{equation}{section}
\newcommand{\real}{\mathbb{R}}
\let\ced=\c         
\def\L{\Lambda}
\def\qed{\,\unskip\kern 6pt \penalty 500
\raise -2pt\hbox{\vrule \vbox to8pt{\hrule width 6pt
\vfill\hrule}\vrule}\par}
\definecolor{darkblue}{rgb}{0.05, .05, .65}
\definecolor{darkgreen}{rgb}{0.1, .65, .1}
\definecolor{darkred}{rgb}{0.8,0,0}
\newcommand{\beqn}{\begin{equation}}
\newcommand{\eeqn}{\end{equation}}
\newcommand{\bear}{\begin{eqnarray}}
\newcommand{\eear}{\end{eqnarray}}
\newcommand{\bean}{\begin{eqnarray*}}
\newcommand{\eean}{\end{eqnarray*}}
\begin{document}

\title{\huge \bf Blow-up patterns for a reaction-diffusion equation with weighted reaction in general dimension}

\author{
\Large Razvan Gabriel Iagar\,\footnote{Departamento de Matem\'{a}tica
Aplicada, Ciencia e Ingenieria de los Materiales y Tecnologia
Electr\'onica, Universidad Rey Juan Carlos, M\'{o}stoles,
28933, Madrid, Spain, \textit{e-mail:} razvan.iagar@urjc.es},\\
[4pt] \Large Marta Latorre\,\footnote{Departamento de Matem\'{a}tica
Aplicada, Ciencia e Ingenieria de los Materiales y Tecnologia
Electr\'onica, Universidad Rey Juan Carlos, M\'{o}stoles,
28933, Madrid, Spain, \textit{e-mail:} marta.latorre@urjc.es},
\\[4pt] \Large Ariel S\'{a}nchez,\footnote{Departamento de Matem\'{a}tica
Aplicada, Ciencia e Ingenieria de los Materiales y Tecnologia
Electr\'onica, Universidad Rey Juan Carlos, M\'{o}stoles,
28933, Madrid, Spain, \textit{e-mail:} ariel.sanchez@urjc.es}\\
[4pt] }
\date{}
\maketitle

\begin{abstract}
We classify all the blow-up solutions in self-similar form to the following reaction-diffusion equation
$$
\partial_tu=\Delta u^m+|x|^{\sigma}u^p,
$$
posed for $(x,t)\in\real^N\times(0,T)$, with $m>1$, $1\leq p<m$ and $-2(p-1)/(m-1)<\sigma<\infty$. We prove that there are several types of self-similar solutions with respect to the local behavior near the origin, and their existence depends on the magnitude of $\sigma$. In particular, these solutions have different blow-up sets and rates: some of them have $x=0$ as a blow-up point, some other only blow up at (space) infinity. We thus emphasize on the effect of the weight on the specific form of the blow-up patterns of the equation. The present study generalizes previous works by the authors limited to dimension $N=1$ and $\sigma>0$.
\end{abstract}

\

\noindent {\bf Mathematics Subject Classification 2020:} 35A24, 35B44, 35C06,
35K10, 35K57, 35K65.

\smallskip

\noindent {\bf Keywords and phrases:} reaction-diffusion equations, finite time blow-up, weighted reaction, singular potential, backward self-similar solutions.

\section{Introduction}

The goal of this paper is to complete the picture of the solutions in self-similar form to the following quasilinear reaction-diffusion equation
\begin{equation}\label{eq1}
u_t=\Delta u^m+|x|^{\sigma}u^p,
\end{equation}
posed for $(x,t)\in\real^N\times(0,T)$ for some $T>0$ and with the following conditions on the exponents $m$, $p$ and $\sigma$
\begin{equation}\label{range.exp}
m>1, \qquad 1\leq p<m, \qquad -\frac{2(p-1)}{m-1}<\sigma<\infty,
\end{equation}
which in particular includes the whole range $\sigma\geq0$ but also some values of $\sigma<0$. With respect to the dimension $N$, our main focus will be on $N\geq2$, since the case $N=1$ has been considered for $\sigma>0$ in a previous work \cite{IS21a}. However, we will also study the case of dimension $N=1$ linked to negative values of $\sigma$, restricted in this case to the range $-1<\sigma<0$ in order to satisfy the usual condition $N+\sigma>0$. In the range of exponents \eqref{range.exp}, we consider self-similar solutions which present finite time blow-up, meaning that there exists a finite time $T\in(0,\infty)$ such that the solution $u(t)\in L^{\infty}(\real^N)$ for any $t\in(0,T)$, but $u$ becomes unbounded at time $T$. This time $T$ is called the blow-up time of $u$. Here and in the sequel we use the standard notation $u(t)$ for the application $x\mapsto u(x,t)$ for a fixed $t>0$. Our interest will be focused on those properties of the solutions related to the blow-up and depending strongly on the weight and the magnitude of $\sigma$.

Reaction-diffusion equations such as Eq. \eqref{eq1} but without a weight at the reaction term are rather well studied by now, a lot of information being available in monographs such as \cite{QS} for the semilinear case $m=1$ and $p>1$ and \cite[Chapter 4]{S4} for $m>1$ and $p>1$ including all the relative positions of $m$ and $p$. In particular, it is shown that for $\sigma=0$ all the solutions to Eq. \eqref{eq1} blow up in finite time for $1<p<m$ but not also for $p=1$, which is a trivial limit case where solutions present exponential grow-up as $t\to\infty$ but are all global. A deeper study of self-similar solutions to Eq. \eqref{eq1} with $\sigma=0$ is performed in \cite[Chapter 4]{S4}, where existence and some properties of such solutions are established, while some further functional properties in higher space dimensions are given in \cite{CdPE98, CdPE02}, see also references therein.

Considering a weight on the reaction term came as a natural step forward once the theory in the spatially homogeneous case was developing with a fast pace. Some nowadays classical papers concerning the semilinear case $m=1$ of Eq. \eqref{eq1} jointly with weighted reaction terms are, for example, \cite{M78} concerning the $\omega$-limit sets of solutions and \cite{BK87, BL89, Pi97, Pi98} where finite time blow-up is considered and the life-span of solutions is studied in dependence on their initial condition and the weight. Andreucci and DiBenedetto study in the long but deep paper \cite{AdB91} the local well-posedness, initial traces and regularity of local solutions to equations such as \eqref{eq1} but with weight $(1+|x|)^{\sigma}$ for a very general range of exponents $m\geq1$, $p>1$ and $\sigma\in\real$, thus including both bounded and unbounded weights. Results related to properties of the exponents $m$, $p$, $\sigma$ and of the initial condition $u_0(x)$ for the solutions to blow up in finite time or to exist globally in time are given for Eq. \eqref{eq1} with $m>1$ in \cite{Qi98, Su02}, the former establishing the Fujita-type exponent, while the existence of global (in time) solutions for $p>m$ sufficiently large is obtained in form of forward self-similar solutions. Moreover, \cite{Su02} also analyzes the interval of existence of solutions to the Cauchy problem with respect to the decay rate of $u_0(x)$ as $|x|\to\infty$, proving that for $p$ sufficiently large, ``fat tails" lead to finite time blow-up, while rapidly decaying initial conditions give solutions that exist globally in time. We also quote in this paragraph the paper \cite{AT05} where a blow-up rate of solutions is given, and works such as \cite{FdPV06, KWZ11, BZZ11, FdP18} where the case of localized, compactly supported weights is considered and quite complete results, including blow-up sets, rates and asymptotic convergence are obtained.

The presence of a spatially dependent coefficient in Eq. \eqref{eq1} produces a second competition in the mechanism of the equation, apart from the already existing one between the diffusion and the reaction effects: it is a competition between the dynamics of the equation near $x=0$ (where formally the weight is very small, tending to zero) and the dynamics of it for large values of $|x|$, where the weight becomes very strong. Thus, a very interesting question related to these models of equations such as Eq. \eqref{eq1} is related to the blow-up set, which can be influenced strongly by the strength of the weighted part. For the semilinear case $m=1$ and in bounded domains, this question has been considered in a series of papers \cite{GLS, GS11, GLS13, GS18} where some conditions for the blow-up to occur at $x=0$ (and more general at the zeros of a generic weight $V(x)$) are given. As a by-product of our analysis, we also give an answer to this question for Eq. \eqref{eq1} by showing that both \emph{blowing up or not at $x=0$ is possible and it depends strongly on the magnitude of $\sigma$}.

Another natural problem is to consider a weight which is unbounded at $x=0$ instead of as $|x|\to\infty$. This is usually called a \emph{singular potential} and its interest stems, in mathematics, from a number of functional inequalities that lead to properties of the solutions, such as, for example, the Hardy inequality. This has been exploited by Baras and Goldstein in their classical work \cite{BG84}, generalized later in \cite{CM99}, to show that letting $m=p=1$ and $\sigma=-2$ produces a very interesting and unexpected balance between existence and non-existence (explained as an \emph{instantaneous complete blow-up}) of the solutions. Later on, singular potentials were considered in a number of different equations and models, such as for example fast diffusion $0<m<1$ in \cite{GK03, GGK05} and doubly nonlinear diffusion in \cite{Ko04}. Recently, the functional analysis of solutions to the semilinear problem with singular potential $|x|^{\sigma}$ with $-2<\sigma<0$ became fashionable, with a number of works studying this problem, see for example \cite{BSTW17, BS19, CIT21a, CIT21b, T20, HT21}. Similarity solutions and behavior near blow-up for general solutions were addressed by Filippas and Tertikas \cite{FT00} in the semilinear case $m=1$ with $\sigma>-2$. Their analysis has been completed very recently by Mukai and Seki \cite{MS21} with a study of the so-called \emph{Type II blow-up} for $p$ sufficiently large.

The present paper extends, on the one hand, to dimension $N\geq2$, in the range $\sigma\geq0$, previous results by two of the authors \cite{IS19, IS21a} concerning the classification of self-similar solutions to Eq. \eqref{eq1}. This generalization, as we shall see, introduces rather tough technical challenges, since some techniques used in dimension $N=1$ do no longer apply for $N\geq2$ and some previous results for $\sigma=0$ were no longer available in higher space dimension. This led us to consider new and rather difficult geometric barriers for the flow in the dynamical system, as seen in Sections \ref{sec.small} and \ref{sec.large} below. On the other hand, we have noticed that a natural limit for $\sigma$ for many technical steps to hold true is not $\sigma=0$ but $\sigma=-2$, and thus we extend our analysis and classification to the whole range $\sigma>-2$. However, some differences in the form and local behavior of a part of the profiles appear also when passing through $\sigma=0$. The results obtained here for $\sigma=-2$ also complete the remaining ranges after previous works by two of the authors such as \cite{IMS21}, where the opposite range for $\sigma$, namely $-2<\sigma<-2(p-1)/(m-1)$, is analyzed and it is proved that finite time blow-up should not occur in this range, or the short note \cite{IS20} for $\sigma=-2$, $p=m$ where an interesting phenomenon of blow-up only at $x=0$ but with suitable integrability properties has been evidenced.

It is now the moment to explain with more rigor our main results and techniques.

\medskip

\noindent \textbf{Main results.} As previously explained, we are looking for self-similar solutions to \eqref{eq1} presenting finite time blow-up, that is, in backward form
\begin{equation}\label{SSS}
u(x,t)=(T-t)^{-\alpha}f(\xi), \qquad \xi=|x|(T-t)^{\beta},
\end{equation}
where the exponents $\alpha$, $\beta$ and the profiles $f$ are to be determined. Inserting the ansatz \eqref{SSS} into Eq. \eqref{eq1}, we readily obtain that the self-similarity exponents are both explicit and positive
\begin{equation}\label{SS.exp}
\alpha=\frac{\sigma+2}{\sigma(m-1)+2(p-1)}, \qquad \beta=\frac{m-p}{\sigma(m-1)+2(p-1)},
\end{equation}
while the profiles $f(\xi)$ solve the following differential equation
\begin{equation}\label{SSODE}
(f^m)''(\xi)+\frac{N-1}{\xi}(f^m)'(\xi)-\alpha f(\xi)+\beta \xi f'(\xi)+\xi^{\sigma}f(\xi)^p=0, \qquad \xi\in[0,\infty).
\end{equation}
This differential equation will be our main object of study in the present work. Similarly as in the one-dimensional case \cite{IS21a}, we introduce our concept of profile we are looking for in the next
\begin{definition}\label{def1}
We say that a solution $f$ to \eqref{SSODE} is a \textbf{good profile}
if it fulfills one of the following two properties related to its
behavior at $\xi=0$:

\indent (H1) $f(0)=a>0$ if $N\geq2$ or $f(0)=a>0$ and $f'(0)=0$ if $N=1$.

\indent (H2) $f(0)=0$, $(f^m)'(0)=0$.

A good profile $f$ is called a \textbf{good profile with interface}
at some point $\eta\in(0,\infty)$ if
$$
f(\eta)=0, \qquad (f^m)'(\eta)=0, \qquad f>0 \ {\rm on} \
(\eta-\delta,\eta), \ {\rm for \ some \ } \delta>0.
$$
\end{definition}
Our first result is an existence theorem in which we do not make any difference between the local behavior of profiles.
\begin{theorem}\label{th.exist}
Let $m$, $p$ and $\sigma$ be as in \eqref{range.exp}. Then there exists at least one good profile with interface to Eq. \eqref{eq1}, according to Definition \ref{def1}.
\end{theorem}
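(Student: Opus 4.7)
My plan is to combine a local analysis of \eqref{SSODE} near $\xi=0$ with a shooting argument carried out in a suitable autonomous phase space associated to the profile ODE. As a first step, I would establish local solvability at the origin: when $N\geq 2$, the singular factor $(N-1)/\xi$ multiplying $(f^m)'$ forces $(f^m)'(0)=0$ on any $C^{1}$ solution, so that a standard fixed-point argument on a small interval $[0,\xi_{0}]$ produces, for each $a>0$, a unique local solution of \eqref{SSODE} satisfying (H1), together with continuous dependence on $a$. An analogous expansion-plus-iteration procedure yields a one-parameter family of local (H2) solutions whose leading behavior at $\xi=0$ is prescribed by the dominant balance in \eqref{SSODE}.

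Next, I would recast \eqref{SSODE} as an autonomous dynamical system in $\real^{3}$ by introducing variables of the type
$$
X=\frac{\xi^{2}}{f^{m-1}}, \qquad Y=\frac{\xi\,(f^{m})'}{f^{m}}, \qquad Z=\xi^{\sigma+2}f^{p-m},
$$
(or a closely related triple, chosen so that the system becomes polynomial in a logarithmic variable $s=\log\xi$). In such a phase space, each of the behaviors (H1), (H2), the interface condition, and the admissible decays as $\xi\to\infty$ corresponds to a specific critical point whose local stable or unstable manifold parametrizes the relevant one-parameter family of orbits; the existence of a good profile with interface is then equivalent to producing a connecting orbit from a profile-emitting critical point to the interface one.

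I would then run a shooting argument along the parameter $a=f(0)$ of the (H1)-family (or along the analogous parameter for (H2)). Let $A_{+}\subset(0,\infty)$ denote the set of parameters for which the corresponding trajectory eventually leaves a prescribed barrier region on the side where $f$ stays strictly positive, and $A_{-}$ the set for which it leaves on the side where $f$ first vanishes with non-zero derivative; both are open by continuous dependence on initial data, and the crucial non-emptiness of each at the extremes of $(0,\infty)$ is obtained by comparison with suitable super- and subsolutions built from homogeneous profiles. Any $a^{*}\in\partial A_{+}\cap\partial A_{-}$ then yields, by connectedness of $(0,\infty)$, a trajectory that tends exactly to the interface critical point and hence a good profile with interface.

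The main obstacle — and what truly separates this from the one-dimensional analysis of \cite{IS21a} — is the construction of the \emph{geometric barriers} delimiting the two shooting sets: invariant regions of the phase space whose boundaries cannot be crossed by any orbit of the system. In dimension $N\geq 2$ the extra curvature contribution $(N-1)(f^{m})'/\xi$ alters the sign structure of the vector field along several natural candidate surfaces and prevents a direct reuse of the one-dimensional barriers of \cite{IS21a}; moreover, these barriers must be tailored separately to the sub-ranges of $\sigma$ at which the qualitative phase portrait changes, which explains the subsequent split into the ``small $\sigma$'' and ``large $\sigma$'' analyses announced in the introduction.
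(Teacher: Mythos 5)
Your proposal shares the paper's overall framework (phase-space reformulation of \eqref{SSODE}, topological shooting, geometric barriers), but it shoots in the \emph{opposite direction} from the paper, and that choice creates a genuine gap over part of the range \eqref{range.exp}.

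The paper shoots \emph{backward from the interface}: it parametrizes the two-dimensional stable manifold of the critical point $P_1$ by the parameter $K$ of the tangent family $Z=KX^{(p-1)/(m-1)}$, shows that the boundary orbit in $\{Z=0\}$ (reached as $K\to 0$) comes from $Q_5$ and the boundary orbit in $\{X=0\}$ (reached as $K\to\infty$) comes from $Q_2$, forms the three sets $\mathcal{A},\mathcal{B},\mathcal{C}$ accordingly, and finishes by excluding $\alpha$-limit sets (damped-oscillation argument plus Dulac/Poincar\'e--Bendixson in the invariant planes). You propose instead to shoot \emph{forward from $\xi=0$}, parametrizing by $a=f(0)$ and classifying trajectories by whether they remain positive ($A_+$) or vanish with nonzero derivative ($A_-$). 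For $\sigma\geq 0$ this is essentially the route of \cite{IS21a} transplanted to $N\ge 2$, and it would very likely work. However, for the negative values $-2(p-1)/(m-1)<\sigma<0$ admitted in \eqref{range.exp}, your first step — ``a standard fixed-point argument on a small interval $[0,\xi_0]$ produces, for each $a>0$, a unique local solution of \eqref{SSODE} satisfying (H1), together with continuous dependence on $a$'' — does not hold as stated. The reaction coefficient $\xi^{\sigma}$ blows up at $\xi=0$, so the right-hand side is not continuous (let alone Lipschitz) near the origin and Picard iteration fails; moreover the actual local expansion \eqref{beh.Q1} for $\sigma<0$ is $f\sim\bigl[K-c\,\xi^{\sigma+2}\bigr]^{1/(m-p)}$, whose derivative diverges at $\xi=0$ when $-2<\sigma<-1$, so $f$ is not even $C^1$ at the origin there and continuous dependence on $a$ has no standard ODE justification. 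This is precisely the difficulty the paper flags explicitly when motivating the backward, purely phase-space shooting: the singular coefficient at $\xi=0$ disappears after the change to $(X,Y,Z)$ and the new time $\eta$, so the entire argument can be run at the level of the regular dynamical system \eqref{PSsyst1}, the required local one-parameter family being supplied by the stable manifold of $P_1$ rather than by a shooting family anchored at the singular point $\xi=0$.

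A secondary, smaller omission: your connectedness step needs to rule out that the boundary trajectory $a^*$ ends at some other $\omega$-limit set (a periodic orbit, a critical point such as $P_\gamma$, or the point at infinity $Q_4$), rather than at the interface point $P_1$. The paper devotes its Step 5 (no $\alpha$-limit sets, via damped oscillations and Dulac's criterion) to the corresponding exclusion in the backward direction, and an analogous $\omega$-limit exclusion would have to be supplied in your forward formulation. If you redo the shooting as the paper does — at the level of \eqref{PSsyst1}, backward from $P_1$, using the parameter $K$ — both issues are resolved and the rest of your outline (invariant barriers tailored to the ranges of $\sigma$, interpretation of the critical points) matches the paper well.
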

However, the most interesting part of the analysis is, in our opinion, the classification of the good self-similar profiles with interface (and thus of the self-similar solutions to Eq. \eqref{eq1} according to \eqref{SSS}) with respect to their local behavior as $\xi\to0$. An outcome of the analysis in this work will give that there are exactly three possible local behaviors of the good profiles according to their behaviors at $\xi=0$, namely
\begin{itemize}
  \item Profiles with positive value at $\xi=0$, that is $f(0)>0$ and
\begin{equation}\label{beh.Q1}
f(\xi)\sim\left\{\begin{array}{ll}\left[K+\frac{\alpha(m-1)}{2mN}\xi^{2}\right]^{1/(m-1)}, & {\rm if} \ \sigma>0,\\
\left[K+\frac{(m-1)(\alpha-K^{(p-1)/(m-1)})}{2mN}\xi^2\right]^{1/(m-1)}, & {\rm if} \ \sigma=0,\\
\left[K-\frac{m-p}{m(N+\sigma)(\sigma+2)}\xi^{\sigma+2}\right]^{1/(m-p)}, & {\rm if} \ \sigma<0\end{array}\right.
\end{equation}
as $\xi\to0$, where $K>0$ is an arbitrary constant.
  \item Profiles with $f(0)=0$ and local behavior given in a first order approximation by
\begin{equation}\label{beh.P2}
f(\xi)\sim\left[\frac{m-1}{2m(mN-N+2)}\right]^{1/(m-1)}\xi^{2/(m-1)}
\end{equation}
as $\xi\to0$.
  \item Profiles with $f(0)=0$ and local behavior given in a first order approximation by
\begin{equation}\label{beh.P0}
f(\xi)\sim K\xi^{(\sigma+2)/(m-p)}
\end{equation}
as $\xi\to0$, where $K>0$ is an arbitrary constant.
\end{itemize}
Let us remark at this point that there is a qualitative difference between the profiles with positive value $f(0)>0$. Indeed, for $\sigma>0$ all these profiles start increasingly in a right neighborhood of $\xi=0$ (a fact similar to the case $N=1$ in \cite{IS21a}), for $\sigma<0$ the profiles start in a decreasing way in a right neighborhood of $\xi=0$, while in the limiting case $\sigma=0$ (also noticed in \cite[Chapter 4]{S4}) there exists a constant solution $f(\xi)=\alpha^{1/(p-1)}=K^*$ and the profiles might start either in an increasing way or in a decreasing way according to whether $f(0)>K^*$ (decreasing) or $f(0)<K^*$ (increasing). 

It is immediate to check that profiles with local behavior \eqref{beh.P2} and \eqref{beh.P0} satisfy assumption (H2) in Definition \ref{def1}, provided \eqref{range.exp} holds true. The natural question that arises in view of Theorem \ref{th.exist} and the previous list of local behaviors is to classify the good profiles with interface with respect to their local behavior at the origin. This is where the magnitude of $\sigma$ comes into play. More precisely, we have the following classification:
\begin{theorem}\label{th.class}
Let $m$, $p$ and $\sigma$ be as in \eqref{range.exp}. We then have
\begin{enumerate}
\item There exists $\sigma_0>0$ such that if
$$
-\frac{2(p-1)}{m-1}<\sigma<\sigma_0,
$$
all the good profiles with interface to Eq. \eqref{SSODE} present the local behavior \eqref{beh.Q1} as $\xi\to0$.
\item There exists $\sigma_1>\sigma_0$ sufficiently large such that if $\sigma\in(\sigma_1,+\infty)$, all the good profiles with interface to Eq. \eqref{SSODE} present the local behavior \eqref{beh.P0} as $\xi\to0$.
\item There exists at least one $\sigma^*\in[\sigma_0,\sigma_1]$ such that, for $\sigma=\sigma^*$, there exists a good profile with interface to Eq. \eqref{SSODE} presenting the local behavior \eqref{beh.P2} as $\xi\to0$.
\end{enumerate}
\end{theorem}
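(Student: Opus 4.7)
The plan is to attack Theorem \ref{th.class} by reformulating \eqref{SSODE} as an autonomous planar dynamical system, classifying the orbits emanating from the critical points that correspond to the three local behaviors \eqref{beh.Q1}--\eqref{beh.P0}, and then monitoring how these orbits depend on $\sigma$. First I would perform the standard change of variables (as in \cite[Chapter 4]{S4} and the one-dimensional analysis in \cite{IS21a}), turning \eqref{SSODE} into a system $(X'(\eta),Y'(\eta))=F(X,Y;\sigma)$ in which profiles of type \eqref{beh.Q1} correspond to a one-parameter family of orbits leaving a critical point $P_Q$ indexed by $a=f(0)>0$; profiles of type \eqref{beh.P0} form a one-parameter family leaving a critical point $P_0$ indexed by the constant $K>0$; and the profile \eqref{beh.P2} is the unique orbit along the (one-dimensional) stable/center manifold of a third critical point $P_2$. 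Good profiles with interface correspond to orbits ending at a specific saddle-type point $P_\eta$ encoding the interface condition $f(\eta)=(f^m)'(\eta)=0$.

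The heart of the proof is Parts~(1) and~(2), both of which require building geometric barriers in the $(X,Y)$-plane that trap or expel certain families of orbits. For Part~(1), when $\sigma$ is small (and in particular close to the lower threshold $-2(p-1)/(m-1)$), the reaction term $\xi^\sigma f^p$ contributes weakly at infinity, so I would construct a region in phase space that no orbit from $P_0$ can leave and that does not meet $P_\eta$, ruling out family \eqref{beh.P0} as a source of good profiles with interface; combined with Theorem~\ref{th.exist}, which guarantees the existence of at least one good profile with interface, this forces the profile to come from the family \eqref{beh.Q1} and defines $\sigma_0$ as the supremum of $\sigma$ for which the barrier argument applies. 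For Part~(2), the argument is symmetric: for $\sigma$ large enough, the strong weighted reaction at finite $\xi$ prevents orbits from $P_Q$ from hitting $P_\eta$ (one typically builds a barrier forcing such orbits into a region of monotone growth or oscillation from which $P_\eta$ is unreachable), defining $\sigma_1$ and leaving \eqref{beh.P0} as the only family producing good profiles with interface.

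With Parts~(1) and~(2) in hand, Part~(3) follows from a continuity/topological argument. Define
$$
A=\{\sigma>-2(p-1)/(m-1):\text{some good profile with interface has behavior }\eqref{beh.Q1}\},
$$
and let $\sigma^\ast=\sup A$. By Part~(1), $\sigma^\ast\geq\sigma_0$, and by Part~(2), $\sigma^\ast\leq\sigma_1$. Choose $\sigma_n\nearrow\sigma^\ast$ with profiles $f_n$ of type \eqref{beh.Q1} that have interface at some $\eta_n$, and extract a convergent subsequence of the associated orbits in the $(X,Y)$-plane by standard compactness and continuous dependence on $\sigma$. The limiting orbit yields a good profile $f^\ast$ with interface at $\sigma=\sigma^\ast$, and the definition of $\sigma^\ast$ combined with the same type of barrier argument applied from the opposite side prevents $f^\ast$ from being of type \eqref{beh.Q1} or of type \eqref{beh.P0}; hence $f^\ast$ must be the exceptional profile of type \eqref{beh.P2}.

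The main obstacle is constructing the geometric barriers needed in Parts~(1) and~(2). In dimension $N\geq 2$ the singular coefficient $(N-1)/\xi$ in \eqref{SSODE} produces nontrivial $\xi$-dependence in the dynamical system that cannot be eliminated by the changes of variables available in $N=1$, so that the barriers used in \cite{IS21a} do not transfer. One must therefore build new regions invariant under the flow whose boundary is carefully tailored to the competing effects of diffusion, convection and the weighted reaction, especially through the transition at $\sigma=0$ where the local behavior \eqref{beh.Q1} itself changes form. Controlling these barriers quantitatively in $\sigma$ is what ultimately pins down the thresholds $\sigma_0$ and $\sigma_1$ and enables the limiting argument in Part~(3).
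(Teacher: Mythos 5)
Your overall strategy---a phase-space reformulation, geometric barriers quantified in $\sigma$ for Parts~1 and~2, and a topological argument for Part~3---is in the same spirit as the paper, but there is a structural gap at the very first step that would propagate through the whole argument. You propose to convert \eqref{SSODE} into a \emph{planar} autonomous system $(X',Y')=F(X,Y;\sigma)$, citing \cite{IS21a} and \cite[Chapter 4]{S4}. However, \eqref{SSODE} contains the explicitly $\xi$-dependent term $\xi^\sigma f^p$, and for $\sigma\neq 0$ this forces the introduction of a \emph{third} variable $Z=\frac{1}{\alpha}\xi^\sigma f^{p-1}$; the paper (and \cite{IS21a}) actually work with the three-dimensional quadratic system \eqref{PSsyst1} in $(X,Y,Z)$. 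Several features you invoke cannot coexist in a genuine planar system: the interface point $P_1$ has a \emph{two-dimensional} stable manifold, and the whole shooting argument of Theorem~\ref{th.exist} is parametrized by the one-parameter family of tangent directions $Z\sim KX^{(p-1)/(m-1)}$ entering $P_1$. In a plane, the stable manifold of a saddle is a single orbit, so you could not shoot, and you could not simultaneously accommodate one-parameter families leaving both a point $P_Q$ and a point $P_0$ plus a distinguished orbit from $P_2$ all headed for the same saddle. Moreover, the behavior \eqref{beh.Q1} does not correspond to a finite critical point at all; it is carried by a point $Q_1$ on the Poincar\'e sphere at infinity, and distinguishing $\sigma>0$, $\sigma=0$ and $\sigma<0$ there is exactly what produces the three cases in \eqref{beh.Q1}.

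Granting the correct three-dimensional setting, the rest of your sketch is essentially right in direction, but the paper's realization of Part~3 is cleaner than yours: instead of defining $\sigma^\ast=\sup A$ with $A$ the set of $\sigma$ for which \emph{some} good profile is of type \eqref{beh.Q1} (which would require you to prove openness of a condition involving a whole one-parameter family of orbits from $Q_1$, plus a compactness/continuous-dependence argument), the paper tracks only the \emph{unique} orbit emanating from $P_2$, partitions $(0,\infty)$ into the three sets $\mathbb{A}$ (orbit enters $P_\gamma$), $\mathbb{B}$ (orbit enters $P_1$), $\mathbb{C}$ (orbit enters $Q_3$), shows $\mathbb{A}$ and $\mathbb{C}$ are open and non-empty using the asymptotic stability of $P_\gamma$ and $Q_3$, and concludes $\mathbb{B}\neq\emptyset$ by basic topology. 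Tracking a single orbit avoids the uniformity issues your limiting argument would face. Finally, while you correctly anticipate that the barrier constructions are the hard technical core (and that the $N=1$ barriers do not transfer because of the $(N-1)/\xi$ term), your sketch gives no hint of the actual mechanism: the paper covers $1<p\leq p_c(\sigma)$ with a plane barrier (Lemma~\ref{lem.rooftop}) and $p_c(\sigma)<p<m$ with an explicit quadratic surface barrier \eqref{surface} in the $(X,Y,W)$-variables (Lemma~\ref{lem.surface}), splicing the two at $p_c(\sigma)$ and then extending by continuity to $\sigma\in(0,\sigma_0)$; for large $\sigma$ (Part~2) the barrier is a pair of planes \eqref{PlaneYZ}--\eqref{PlaneXY} through $P_2$ that funnel both the $P_2$-orbit and the $Q_1$-orbits into $\{Y\leq -Y_0\}$, from which they can only reach $Q_3$. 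Without committing to a concrete geometric construction of this sort, the claim that barriers ``rule out'' families is just an assertion.
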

We leave here as a \textbf{conjecture} that in fact $\sigma_0=\sigma_1=\sigma^*$, hence the uniqueness of the value of $\sigma$ for which the behavior \eqref{beh.P2} is taken. Both intuition and numerical evidence suggest that this is true, and there is thus a continuous process of change of the geometric form of the profiles as $\sigma$ increases. However, in order to prove this conjecture rigorously, some monotonicity properties of the dynamics of the equation \eqref{SSODE} with respect to $\sigma$ are needed, and proving such monotonicity is usually a very difficult problem when one deals with self-similar solutions in backward form. We leave below some comments and remarks related to the classification.

\medskip

\noindent \textbf{Blow-up sets.} A very important influence of the classification given in Theorem \ref{th.class} appears in relation to the blow-up sets of the self-similar solutions \eqref{SSS} with good profiles $f(\xi)$ as in Theorem \ref{th.class}. We recall below the definition of the blow-up set adapted from \cite[Section 24]{QS}: for any solution $u$ to Eq. \eqref{eq1} with finite blow-up time $T\in(0,\infty)$, the \emph{blow-up set} of $u$ is defined as
\begin{equation}\label{BUS}
B(u)=\{x\in\real^N: \exists(x_k,t_k)\in\real^N\times(0,T), \ t_k\to T, \ x_k\to x, \ {\rm and} \  |u(x_k,t_k)|\to\infty, \ {\rm as} \ k\to\infty\}.
\end{equation}
On the one hand, we readily notice that for either a good profile with local behavior \eqref{beh.Q1}, or for a good profile with local behavior \eqref{beh.P2} as $\xi\to0$, the blow-up set is the whole space $\real^N$. Indeed, we infer from \eqref{SSS} that either
\begin{equation}\label{interm1}
u(x,t)=(T-t)^{-\alpha}f(|x|(T-t)^{\beta})\sim a(T-t)^{-\alpha}, \quad {\rm as} \ t\to T,
\end{equation}
for self-similar solutions with good profiles presenting the local behavior at the origin given by \eqref{beh.Q1} with $a=K^{1/(m-1)}$ if $\sigma\geq0$ or $a=K^{1/(m-p)}$ if $\sigma<0$, or
\begin{equation}\label{interm2}
u(x,t)\sim C(T-t)^{-\alpha+2\beta/(m-1)}|x|^{2/(m-1)}=C(T-t)^{-1/(m-1)}|x|^{2/(m-1)}, \quad {\rm as} \ t\to T,
\end{equation}
for self-similar solutions with good profiles presenting the local behavior at the origin given by \eqref{beh.P2}. It is obvious from \eqref{interm1}, \eqref{interm2} and the definition \eqref{BUS} that in both cases the blow-up set is the whole space, but the \emph{blow-up rate over fixed compact sets is different} in the two cases. On the other hand, for self-similar solutions whose profiles behave locally as in \eqref{beh.P0} as $\xi\to0$, a sharp difference appears: indeed, for any $x\in\real^N$ fixed, we find
$$
u(x,t)\sim C(T-t)^{-\alpha+(\sigma+2)\beta/(m-p)}|x|^{(\sigma+2)/(m-p)}=C|x|^{(\sigma+2)/(m-p)}<\infty,
$$
hence these solutions remain bounded forever at any finite point. However, finite time blow-up still occurs at $t=T$ but only on curves $x(t)$ depending on $t$ such that $x(t)\to\infty$ as $t\to T$. This phenomenon is known in literature as \emph{blow-up at (space) infinity}, see also \cite{La84, GU05, GU06} for other examples when it occurs in the semilinear case. Let us thus end this discussion by stressing here that in particular \textbf{the origin can be a blow-up point} for a solution to Eq. \eqref{eq1} \textbf{or not}, and this depends on how large is $\sigma$, giving thus a partial answer to the question discussed in this introduction.

\medskip

\noindent \textbf{Remark.} The results in this paper generalize, on the one hand, the ones obtained in \cite{IS19, IS21a} in the one-dimensional case for $\sigma>0$, proving a similar classification for the profiles. For the admitted range of negative values of $\sigma$ according to \eqref{range.exp}, our analysis shows that there is a single type of good profiles with interface, that will be decreasing with respect to $\xi$ and thus completing the outcome of the recent paper \cite{IMS21} with the complementary range of negative $\sigma$, that is, when the important constant
\begin{equation}\label{constantL}
L:=\sigma(m-1)+2(p-1)
\end{equation}
is positive. We show that in this case finite time blow-up may occur when $L>0$, while this is not true for $L<0$ as shown in \cite{IMS21}. Finally, we also provide an alternative and independent proof of the existence of a self-similar solution for the homogeneous case $\sigma=0$, established in \cite[Chapter 4]{S4}.

\medskip

\noindent \textbf{Organization of the paper}. The main tool of this work is a phase-space analysis applied to a quadratic dynamical system of three equations into which \eqref{SSODE} is mapped through a change of variable that is introduced in Section \ref{sec.local}. The critical points in the phase space will be classified in Section \ref{sec.local} for the finite part of the space, respectively Section \ref{sec.infty} for the infinity of the phase space. Once the local analysis has been understood, it is time for the global analysis of the connections in the phase space, which will cover the rest of the work. In a first step, the proof of the existence Theorem \ref{th.exist} is performed in Section \ref{sec.exist}, using a strategy of \emph{backward shooting from the interface point} but which in this case will be technically different from the analogous result in \cite{IS21a}: in the latter, an analysis directly in terms of profiles and using arguments of continuity near $x=0$ was used, while in the present paper, such arguments are no longer valid at least for exponents $\sigma<0$ where continuity with respect to $\sigma$ cannot be used anymore. Thus, all the proofs in this paper \emph{are performed at the level of the quadratic dynamical system}, where the possible singularity at $x=0$ is removed, and this also allows us to have an independent proof also for $\sigma=0$ of the previous results in the book \cite{S4}. The proof of Theorem \ref{th.class} is split into Section \ref{sec.small} and Section \ref{sec.large}, in both being used geometric arguments such as barriers for the flow of the dynamical system in forms of planes or surfaces, but generating very different invariant regions according to whether $\sigma$ is small or large. Notice here that the proofs in these sections strongly depart from their analogous results in dimension $N=1$ and $\sigma>0$, and are far more involved technically. We end this presentation by stressing that the fact that for $\sigma$ large there exist only good profiles with behavior as $\xi\to0$ given by \eqref{beh.P0} is novel also in dimension $N=1$, as we were not able to prove it in the dedicated paper \cite{IS21a} with the tools developed therein.

\section{The phase space. Local analysis of the finite critical points}\label{sec.local}

As explained in the Introduction, we focus on the differential equation \eqref{SSODE} satisfied by the self-similar profiles $f(\xi)$. Since this equation is non-autonomous and not easy to study by direct methods, we employ a phase space analysis. More precisely, we introduce the following change of variables also used in previous works such as \cite{IS19,IMS21}
\begin{equation}\label{PSchange}
X(\eta)=\frac{m}{\alpha}\xi^{-2}f(\xi)^{m-1}, \ \ Y(\eta)=\frac{m}{\alpha}\xi^{-1}f(\xi)^{m-2}f'(\xi), \ \ Z(\eta)=\frac{1}{\alpha}\xi^{\sigma}f(\xi)^{p-1},
\end{equation}
with a new independent variable $\eta$ defined implicitly in terms of $\xi$ as
\begin{equation}\label{ind.var}
\eta(\xi)=\frac{\alpha}{m}\int_{0}^{\xi}\zeta f(\zeta)^{1-m}\,d\zeta.
\end{equation}
By expressing $f'(\xi)$ in terms of $Y$ from the second equation in \eqref{PSchange} and then performing some direct calculations, we find that Eq. \eqref{SSODE} is mapped by \eqref{PSchange} into the following three-dimensional dynamical system:
\begin{equation}\label{PSsyst1}
\left\{\begin{array}{ll}\dot{X}=X[(m-1)Y-2X],\\
\dot{Y}=-Y^2-\frac{\beta}{\alpha}Y+X-NXY-XZ,\\
\dot{Z}=Z[(p-1)Y+\sigma X].\end{array}\right.
\end{equation}
Let us first observe that the planes $\{X=0\}$ and $\{Z=0\}$ are invariant for the system \eqref{PSsyst1} and also infer from \eqref{PSchange} that $X\geq0$, $Z\geq0$. Assume now that $p>1$ (the limit case $p=1$ being very similar and treated at the end of the current section). The critical points in the finite part of the phase space associated to the system \eqref{PSsyst1} are then
$$
P_0=(0,0,0), \ \ P_1=\left(0,-\frac{\beta}{\alpha},0\right), \ \ P_2=(X(P_2),Y(P_2),0) \ \ {\rm and} \ \ P_{\gamma}=(0,0,\gamma),
$$
for any $\gamma>0$, where
\begin{equation}\label{comp.P2}
X(P_2)=\frac{m-1}{2\alpha(mN-N+2)}, \qquad Y(P_2)=\frac{1}{\alpha(mN-N+2)}.
\end{equation}
Let us remark at this point that $Y(P_2)<1/N$, since
\begin{equation}\label{ineq1}
\frac{1}{N}-Y(P_2)=\frac{2(N(m-p)+\sigma+2)}{N(\sigma+2)(mN-N+2)}>0.
\end{equation}
We next analyze the flow of the system \eqref{PSsyst1} near these critical points.
\begin{lemma}[Local analysis near $P_0$]\label{lem.P0}
In a neighborhood of the critical point $P_0$ the system \eqref{PSsyst1} has a one-dimensional stable manifold and a two-dimensional center manifold. The connections tangent to the center manifold go out of $P_0$ and contain profiles with the local behavior \eqref{beh.P0} as $\xi\to0$.
\end{lemma}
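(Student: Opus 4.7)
The plan is to apply standard center manifold theory to the system \eqref{PSsyst1}. First I would compute the Jacobian of \eqref{PSsyst1} at $P_0=(0,0,0)$, which reads
$$
J(P_0)=\begin{pmatrix} 0 & 0 & 0 \\ 1 & -\beta/\alpha & 0 \\ 0 & 0 & 0 \end{pmatrix},
$$
with characteristic polynomial $-\lambda^{2}(\lambda+\beta/\alpha)$. Its spectrum therefore consists of the simple negative eigenvalue $-\beta/\alpha<0$ (recall $\alpha,\beta>0$ by \eqref{SS.exp}), with eigenvector $(0,1,0)$, together with a double zero eigenvalue whose eigenspace is two-dimensional, spanned by $(1,\alpha/\beta,0)$ and $(0,0,1)$. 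The center manifold theorem then directly yields a one-dimensional local stable manifold tangent at $P_0$ to the $Y$-axis, and a two-dimensional local center manifold $\mathcal{W}^{c}$ tangent at $P_0$ to the center subspace.

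Next, I would represent $\mathcal{W}^{c}$ locally as a graph $Y=h(X,Z)$ with $h(0,0)=0$ and $\nabla h(0,0)=(\alpha/\beta,0)$. Substituting this ansatz in the invariance identity $h_{X}\dot X+h_{Z}\dot Z=\dot Y$ and balancing the lowest-order terms gives the expansion
$$
h(X,Z)=\frac{\alpha}{\beta}\,X+\mathrm{h.o.t.}
$$
Plugging it back into the first and third equations of \eqref{PSsyst1}, a direct computation using $\alpha/\beta=(\sigma+2)/(m-p)$ and the definition \eqref{constantL} of $L$ yields the reduced two-dimensional dynamics on $\mathcal{W}^{c}$:
$$
\dot X=\frac{L}{m-p}\,X^{2}+\mathrm{h.o.t.},\qquad \dot Z=\frac{L}{m-p}\,XZ+\mathrm{h.o.t.},
$$
and the coefficient $c:=L/(m-p)$ is strictly positive by the standing assumption \eqref{range.exp}.

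From this reduced system one sees that $d(Z/X)/d\eta$ vanishes to leading order in the interior $\{X>0,\,Z>0\}$, so the orbits of $\mathcal{W}^{c}$ there are, to leading order, the rays $Z=KX$ for arbitrary $K>0$. Moreover, integrating $\dot X\sim cX^{2}$ gives $X(\eta)\sim -1/(c\eta)$, so every such orbit enters $P_{0}$ precisely as $\eta\to-\infty$. Translating the limit $Z/X\to K$ back to the original variables through \eqref{PSchange} gives
$$
\frac{\xi^{\sigma+2}}{f(\xi)^{m-p}}\longrightarrow mK\qquad \text{as }\xi\to 0^{+},
$$
which is exactly \eqref{beh.P0} with the free constant there equal to $(mK)^{-1/(m-p)}$. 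This simultaneously shows that the connections tangent to $\mathcal{W}^{c}$ go out of $P_{0}$ (recall that $\eta\to-\infty$ corresponds to $\xi\to 0^{+}$) and that they carry the profiles announced in the lemma.

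The main obstacle will be to upgrade the leading-order computation above into a rigorous asymptotic for the profiles; concretely, one has to check that the higher-order corrections in the expansion of $h$ do not destroy the approximate first integral $Z/X\approx\text{const}$. The cleanest way is to introduce $W=Z/X$ as a new variable on $\mathcal{W}^{c}$ and to derive an equation of the shape $\dot W=\mathrm{O}(X)\,W$ valid throughout a full neighborhood of $P_{0}$; integrating this relation along a center-manifold trajectory with $X(\eta)\to 0$ as $\eta\to-\infty$ then shows that $W$ admits a positive finite limit, which is precisely the constant $K$ above and parametrizes the one-parameter family of profiles of type \eqref{beh.P0} that emanate from $P_0$.
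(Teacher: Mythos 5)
Your overall strategy matches the paper's: identify the double-zero eigenvalue at $P_0$, compute the reduced dynamics on the two-dimensional center manifold, and read off the local behavior from the leading-order reduced system $\dot X\approx cX^2$, $\dot Z\approx cXZ$ with $c=1/\beta=L/(m-p)>0$. The cosmetic difference is that the paper first transforms to canonical coordinates $(X,U,Z)$ with $U=(\beta/\alpha)Y-X$, so that the center manifold is a graph $U=h(X,Z)$ with $\nabla h(0,0)=0$ and the standard form of the center manifold theorem applies directly; you instead write it as $Y=h(X,Z)$ with $\nabla h(0,0)=(\alpha/\beta,0)$, which is legitimate because the center eigenspace projects bijectively onto the $(X,Z)$-plane, but it requires you to argue this graph representation rather than invoke the theorem verbatim.

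There is, however, a genuine imprecision in your final paragraph that would break the argument as stated. You propose to derive $\dot W=O(X)W$ for $W=Z/X$ and then integrate to get a finite positive limit of $W$ as $\eta\to-\infty$. This bound is too weak: along the reduced flow one has $X(\eta)\sim -1/(c\eta)$ as $\eta\to-\infty$, so $\int_{-\infty}^{\eta_0}X(s)\,ds$ diverges and the relation $\dot W=O(X)W$ only gives $\ln W$ changing by an unbounded amount, allowing $W\to 0$ or $W\to\infty$. What you actually need, and what your own leading-order computation already buys you, is the cancellation of the linear contribution to $\dot W$: writing $\dot X=X\,f$ and $\dot Z=Z\,g$, one has $\dot W=W(g-f)$, and $g-f=(p-m)Y+(\sigma+2)X$ vanishes at first order on the center manifold because $(p-m)(\alpha/\beta)+(\sigma+2)=0$. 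This yields $\dot W=O(X^2)$ (equivalently $\dot W=O(X^2)W$), which is integrable since $X^2\sim 1/(c\eta)^2$, and only then does $W$ converge to a finite positive constant $K$. The paper makes this explicit through the observation (proved by induction on the Taylor expansion of the center manifold) that all higher-order terms in $h$ are multiples of $X$, giving the structure $\dot X=\frac{1}{\beta}X^2+X^2O(|(X,Z)|)$ and $\dot Z=\frac{1}{\beta}XZ+XO(|(X,Z)|^2)$; you should state this structural fact and the cancellation $g-f=O(X\,|(X,Z)|)$ explicitly rather than the weaker $\dot W=O(X)W$.
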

\begin{proof}
The linearization of the system \eqref{PSsyst1} near $P_0$ has the matrix
$$
M(P_0)=\left(
         \begin{array}{ccc}
           0 & 0 & 0 \\
           1 & -\frac{\beta}{\alpha} & 0 \\
           0 & 0 & 0 \\
         \end{array}
       \right),
$$
with one negative eigenvalue and a two-dimensional center manifold. We introduce a new change of variable by setting $U=(\beta/\alpha)Y-X$ in order to replace the variable $Y$ and get a canonical form:
\begin{equation}\label{interm3}
\left\{\begin{array}{ll}\dot{X}=\frac{1}{\beta}X^2+\frac{(m-1)\alpha}{\beta}XU,\\
\dot{U}=-\frac{\beta}{\alpha}U-\frac{\alpha}{\beta}U^2-\frac{\alpha(m+1)+N\beta}{\beta}XU-\frac{\beta}{\alpha}XZ-\frac{(N-2)\beta+m\alpha}{\beta}X^2,\\
\dot{Z}=\frac{1}{\beta}XZ+\frac{\alpha(p-1)}{\beta}ZU.\end{array}\right.
\end{equation}
According to \cite[Theorem 3, Section 2.5]{Carr}, we look for a center manifold in form of a surface whose first order Taylor approximation has the form
$$
U=h(X,Z)=aX^2+bXZ+cZ^2+O(|(X,Z)|^3),
$$
with $a$, $b$ and $c$ coefficients to be determined later. By replacing this expression into the equation of the center manifold given in \cite[Theorem 1, Section 2.12]{Pe} and identifying the similar quadratic terms, we find the expansion of the center manifold near $P_0$ as
\begin{equation}\label{center.P0}
h(X,Z)=-\frac{\alpha}{\beta^2}[(N-2)\beta+\alpha m]X^2-XZ+XO(|(X,Z)|^2),
\end{equation}
where the fact that the higher order terms are all multiples of $X$ follows from an easy argument by induction based on the non-appearance of pure powers of $Z$ alone in the vector field of the system \eqref{PSsyst1}. We omit these straightforward but rather long calculations, more details are given in \cite[Section 2]{IS21a}. By replacing next $U$ by $h(X,Z)$ in the first and third equation of the system \eqref{interm3}, we infer from the same theoretical result \cite[Theorem 1, Section 2.12]{Pe} that the flow on the center manifold is given by the reduced system
\begin{equation}\label{interm4}
\left\{\begin{array}{ll}\dot{X}&=\frac{1}{\beta}X^2+X^2O(|(X,Z)|),\\
\dot{Z}&=\frac{1}{\beta}XZ+XO(|(X,Z)|^2),\end{array}\right.
\end{equation}
in a neighborhood of its origin $(X,Z)=(0,0)$. It thus follows that the orbits on the center manifold go out of the point $P_0$. The local behavior of the profiles contained in these orbits is deduced by direct integration of the system \eqref{interm4}, leading to $Z\sim KX$ in a first approximation for any free constant $K>0$ and then to \eqref{beh.P0} by replacing $Z$ and $X$ with their definitions in \eqref{PSchange}. Since $X\to0$ at $P_0$, we get that
$$
X(\xi)=\frac{m}{\alpha}\xi^{-2}f(\xi)^{m-1}\sim\xi^{-2+(m-1)(\sigma+2)/(m-p)}=\xi^{[\sigma(m-1)+2(p-1)]/(m-p)}\to0,
$$
thus the behavior \eqref{beh.P0} is taken as $\xi\to0$ owing to the fact that $\sigma(m-1)+2(p-1)>0$ in our range \eqref{range.exp}. Finally, the one-dimensional stable manifold of $P_0$ is contained in the $Y$ axis, as indicated by the eigenvector $e_2=(0,1,0)$ corresponding to the eigenvalue $-\beta/\alpha$ of the matrix $M(P_0)$ and by the uniqueness of the stable manifold \cite[Theorem 3.2.1]{GH}, and no profiles are contained in it.
\end{proof}
It is now the turn for the point $P_1$, which will codify the interface behavior.
\begin{lemma}[Local analysis near $P_1$]\label{lem.P1}
The system \eqref{PSsyst1} in a neighborhood of $P_1$ has a two-dimensional stable manifold and a one-dimensional unstable manifold. The orbits entering $P_1$ on the two-dimensional manifold correspond to an interface at a point $\xi_0\in(0,\infty)$, with the more precise local behavior
\begin{equation}\label{beh.P1}
f(\xi)\sim\left[C-\frac{\beta(m-1)}{2m}\xi^2\right]_{+}^{1/(m-1)}, \qquad {\rm as} \ \xi\to\xi_0=\sqrt{\frac{2mC}{\beta(m-1)}}, \ \xi<\xi_0,
\end{equation}
where $C>0$ is a free constant.
\end{lemma}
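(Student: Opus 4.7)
The plan is to linearize the system \eqref{PSsyst1} at $P_1=(0,-\beta/\alpha,0)$, identify its stable manifold, and then translate the local flow back into the profile variable via \eqref{PSchange}.

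First, I would observe that since the planes $\{X=0\}$ and $\{Z=0\}$ are invariant for \eqref{PSsyst1}, the Jacobian $M(P_1)$ is lower triangular and its eigenvalues are read off the diagonal:
$$
\lambda_1=-\frac{(m-1)\beta}{\alpha},\qquad \lambda_2=\frac{\beta}{\alpha},\qquad \lambda_3=-\frac{(p-1)\beta}{\alpha}.
$$
Under \eqref{range.exp} (with $p>1$) together with the positivity of $\alpha,\beta$ from \eqref{SS.exp}, two eigenvalues are negative and one is positive, so the stable manifold theorem produces the claimed two-dimensional stable manifold and one-dimensional unstable manifold. A direct computation shows that the eigenvector for $\lambda_2$ is parallel to $(0,1,0)$ and the one for $\lambda_3$ is parallel to $(0,0,1)$; via \eqref{PSchange} both directions have $X=0$, hence $f\equiv 0$, and therefore carry no profile. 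The only non-trivial direction is that of the eigenvector for $\lambda_1$, which I would compute to be $(1,-(\alpha+N\beta)/(m\beta),0)$ and which lies in the stable manifold.

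Second, and this is the decisive step, I would translate the convergence $Y(\eta)\to-\beta/\alpha$ on any orbit of the stable manifold back to the original variable. Using \eqref{PSchange} and the identity $(f^{m-1})'=(m-1)f^{m-2}f'$, one rewrites
$$
Y(\eta)=\frac{m}{\alpha(m-1)}\,\xi^{-1}(f^{m-1})'(\xi),
$$
so that $Y\to-\beta/\alpha$ is equivalent to $(f^{m-1})'(\xi)\sim-\beta(m-1)\xi/m$ in the asymptotic regime. Integrating from a prospective interface point $\xi_0$ (where $f(\xi_0)=0$) backwards to $\xi$ immediately produces
$$
f^{m-1}(\xi)\sim\frac{\beta(m-1)}{2m}(\xi_0^2-\xi^2)=C-\frac{\beta(m-1)}{2m}\,\xi^2,
$$
with $C=\beta(m-1)\xi_0^2/(2m)$, which is the claimed \eqref{beh.P1}.

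Finally, I would verify that reaching $P_1$ as $\eta\to+\infty$ really corresponds to a \emph{finite} positive $\xi_0$. Plugging the Ansatz above into \eqref{ind.var} yields $\zeta f(\zeta)^{1-m}\sim\mathrm{const}/(\xi_0-\zeta)$ near $\xi_0$, so $\eta$ diverges logarithmically as $\xi\to\xi_0^-$, consistent with the approach to a critical point in infinite $\eta$-time. The main technical obstacle will be to upgrade the qualitative convergence $Y\to-\beta/\alpha$ on the \emph{entire} two-dimensional stable manifold to the asymptotic identity used above: since $|\lambda_1|=(m-1)\beta/\alpha>(p-1)\beta/\alpha=|\lambda_3|$ (recall $m>p$), the $Z$-mode along the $\lambda_3$ eigenvector decays more slowly than the $X$-mode, but it affects only $Z$ and not the leading order of $Y$, so the asymptotic identity persists uniformly on the stable manifold. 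This argument also makes transparent why the limit case $p=1$, for which $\lambda_3=0$ and $P_{\gamma}$ collapses onto $P_1$, needs a separate treatment at the end of the section.
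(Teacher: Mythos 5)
Your linearization, eigenvalue computation, eigenvector identification, and the integration from $Y\to-\beta/\alpha$ back to \eqref{beh.P1} all match the paper's argument in substance. However, there is a genuine gap in the one step that the paper treats as the crux of the lemma: establishing that the asymptotic regime is $\xi\to\xi_0\in(0,\infty)$ rather than $\xi\to\infty$. As $\eta\to+\infty$ along an orbit entering $P_1$, the change of variable \eqref{ind.var} only tells you that $\xi$ tends to \emph{some} limit in $(0,+\infty]$; a priori both a finite interface point and $\xi\to\infty$ are compatible with $\eta\to+\infty$. Your "verification" already assumes the finite--$\xi_0$ ansatz and then checks that \eqref{ind.var} diverges logarithmically, which is a consistency check, not an exclusion of the alternative. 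The paper instead \emph{rules out} $\xi\to\infty$: from $\xi X'(\xi)=-2X(\xi)+(m-1)Y(\xi)$ and an application of the calculus lemma \cite[Lemma 2.9]{IL13} to $X(\xi)\to0$, one extracts a sequence $\xi_k\to\infty$ along which $\xi_k X'(\xi_k)\to0$ and hence $(m-1)Y(\xi_k)\to0$, contradicting $Y\to-\beta/\alpha\neq0$. (An even more elementary route is available: if the regime were $\xi\to\infty$, then $(f^{m-1})'(\xi)\sim-\beta(m-1)\xi/m\to-\infty$ forces $f^{m-1}(\xi)\to-\infty$, contradicting $f^{m-1}\ge0$; but some such argument must be supplied.) Only after this exclusion does $f(\xi_0)=0$ at a finite $\xi_0$ become available, and the interface interpretation of $P_1$ stand.

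Two smaller remarks. First, the claim that "the only non-trivial direction is that of the eigenvector for $\lambda_1$" is misleading: $e_3$ is the \emph{slow} stable direction (since $|\lambda_3|=(p-1)\beta/\alpha<(m-1)\beta/\alpha=|\lambda_1|$), so generic orbits of the stable manifold not contained in $\{X=0\}$ approach $P_1$ tangent to $e_3$, not $e_1$; all such orbits carry profiles (this one-parameter family of approach directions, $Z\sim KX^{(p-1)/(m-1)}$, is exactly what the shooting argument in Section~\ref{sec.exist} exploits). What matters for \eqref{beh.P1} is only the common limit $Y\to-\beta/\alpha$, not the tangent direction. Second, the concern you raise at the end about "upgrading the qualitative convergence" is not an obstacle: the asymptotic identity $(f^{m-1})'(\xi)\sim-\beta(m-1)\xi/m$ is an immediate rewriting of $Y(\eta)\to-\beta/\alpha$ via \eqref{PSchange}, and the relative decay rates of the $X$ and $Z$ modes play no role in it.
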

\begin{proof}
The linearization of the system \eqref{PSsyst1} in a neighborhood of $P_1$ has the matrix
$$
M(P_1)=\left(
         \begin{array}{ccc}
           -\frac{(m-1)\beta}{\alpha} & 0 & 0 \\[1mm]
           1+\frac{N\beta}{\alpha} & \frac{\beta}{\alpha} & 0 \\[1mm]
           0 & 0 & -\frac{(p-1)\beta}{\alpha} \\
         \end{array}
       \right)
$$
thus the two-dimensional stable manifold and the one-dimensional unstable manifold are obvious. The local behavior near $P_1$ is given by the fact that $Y\to-\beta/\alpha$ on the orbits entering $P_1$, together with the fact that $X\to0$ and $Z\to0$. If this behavior would be taken as $\xi\to\infty$, since $Y(\xi)=m\xi^{-1}(f(\xi)^{m-1})'(\xi)/(m-1)$, the fact that
$$
\xi X'(\xi)=-2X(\xi)+(m-1)Y(\xi)
$$
together with an application of \cite[Lemma 2.9]{IL13} for the function $X(\xi)$ would imply that there exists a sequence $\xi_k\to\infty$ such that $(m-1)Y(\xi_k)\to0$, which is a contradiction. We thus deduce that the local behavior is taken, in terms of profiles, as $\xi\to\xi_0\in(0,\infty)$ from the left, which gives first that $f(\xi_0)=0$ and then
$$
(f^{m-1})'(\xi)\sim\frac{\beta(m-1)\xi}{m}, \qquad {\rm as} \ \xi\to\xi_0,
$$
whence the local behavior given by \eqref{beh.P1} follows by integration on a generic interval $(\xi,\xi_0)$.
\end{proof}
We now turn out our attention to the next critical point $P_2$.
\begin{lemma}[Local analysis near $P_2$]\label{lem.P2}
The system \eqref{PSsyst1} in a neighborhood of the critical point $P_2$ has a two-dimensional stable manifold and a one-dimensional unstable manifold. The stable manifold is included in the plane $\{Z=0\}$. There exists a unique orbit going out of $P_2$, which contains profiles with local behavior given by \eqref{beh.P2} as $\xi\to0$.
\end{lemma}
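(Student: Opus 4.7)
The plan is to linearize the system \eqref{PSsyst1} at $P_2$ and exploit the invariance of the plane $\{Z=0\}$, which forces the Jacobian $J(P_2)$ to be block triangular with the $Z$-direction giving a scalar eigenvalue. Using the relation $(m-1)Y(P_2)=2X(P_2)$ (obtained from $\dot X=0$ at $P_2$) to simplify the $(1,1)$ entry of the Jacobian, this decoupled eigenvalue reads
\[
\lambda_3=(p-1)Y(P_2)+\sigma X(P_2)=\frac{L}{2\alpha(mN-N+2)},
\]
with $L$ as in \eqref{constantL}, which is positive under our standing assumption \eqref{range.exp}. This yields the one-dimensional unstable manifold at $P_2$, whose tangent line has a non-zero $Z$-component, and whose admissible branch in $Z\geq 0$ produces the unique orbit going out of $P_2$; uniqueness is standard \cite[Theorem 3.2.1]{GH}.

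The $2\times 2$ block $A$ acting on the $(X,Y)$-plane governs the stable manifold, which by the invariance of $\{Z=0\}$ must lie entirely in that plane. Its trace
\[
\mathrm{tr}(A)=-(N+2)X(P_2)-2Y(P_2)-\frac{\beta}{\alpha}
\]
is clearly negative. For the determinant the plan is to expand, apply once more $(m-1)Y(P_2)=2X(P_2)$, and use the elementary identity $4Y(P_2)+4NX(P_2)=2/\alpha$, which follows directly from \eqref{comp.P2}. After substituting the explicit expressions $\alpha=(\sigma+2)/L$ and $\beta=(m-p)/L$ from \eqref{SS.exp}, the expression should collapse to
\[
\det(A)=X(P_2)\,\frac{L}{\sigma+2},
\]
which is strictly positive because $L>0$ and $\sigma+2>0$ in the range \eqref{range.exp}. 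Combined with the negative trace, this forces both eigenvalues of $A$ to have negative real part, giving the two-dimensional stable manifold inside $\{Z=0\}$.

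It remains to translate the outgoing orbit into profile behavior. Inverting \eqref{PSchange}, the convergence $X(\eta)\to X(P_2)$ as $\eta\to-\infty$ forces
\[
f(\xi)^{m-1}\sim\frac{\alpha}{m}\,X(P_2)\,\xi^{2}=\frac{m-1}{2m(mN-N+2)}\,\xi^{2},
\]
which is exactly \eqref{beh.P2}. The convergence $Y\to Y(P_2)$ then follows by differentiating this leading term, while $Z\to 0$ reduces to $\xi^{\sigma+2(p-1)/(m-1)}\to 0$, which holds precisely because $L>0$. Plugging the same leading order into \eqref{ind.var} gives $\zeta f(\zeta)^{1-m}\sim c/\zeta$, hence a logarithmic divergence of $\eta$ at $0$, so $\eta(\xi)\to-\infty$ does correspond to $\xi\to 0^{+}$. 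The main technical obstacle is the computation of $\det(A)$: nothing a priori suggests that the relevant combination of exponents should reduce to $L/(\sigma+2)$, and only after all the explicit substitutions does the positivity become manifest, revealing that the critical threshold $L=0$ coincides exactly with the lower admissible bound on $\sigma$ in \eqref{range.exp}.
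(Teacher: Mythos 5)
Your proposal is correct and follows essentially the same route as the paper: block-triangularity of the Jacobian owing to the invariance of $\{Z=0\}$, identification of the unstable $Z$-direction, trace and determinant of the $2\times 2$ block to deduce that both remaining eigenvalues have negative real part (hence a stable node or focus inside $\{Z=0\}$), and translation of the outgoing orbit into the profile behavior \eqref{beh.P2}. Your explicit determinant computation, using $(m-1)Y(P_2)=2X(P_2)$ and $4Y(P_2)+4NX(P_2)=2/\alpha$, indeed collapses to $\det(A)=X(P_2)/\alpha=X(P_2)L/(\sigma+2)$, which agrees with the paper's $\lambda_1\lambda_2=(m-1)/(2\alpha\varphi)$. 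Two small remarks. First, your value $\lambda_3=(p-1)Y(P_2)+\sigma X(P_2)=L/(2\alpha(mN-N+2))$ is the correct one; the paper's displayed $\lambda_3=L/\varphi$ is off by a factor of $2$ (a harmless typo since only the sign matters). Second, you add a verification, via \eqref{ind.var}, that the leading-order behavior $f(\xi)^{1-m}\sim c\,\xi^{-2}$ makes $\eta(\xi)$ diverge logarithmically as $\xi\to0^{+}$, so that $\eta\to-\infty$ does correspond to $\xi\to0$; the paper instead checks that $Z(\xi)\to0$ forces $\xi\to0$. Both are valid; the paper's choice is the one that generalizes more directly to the other critical points. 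You also do not exhibit the eigenvector $e_3$ explicitly as in \eqref{vector.P2}, relying only on its non-vanishing $Z$-component; this suffices for the present lemma, although the explicit sign information on the $X$- and $Y$-components of $e_3$ is used later (in Lemma \ref{lem.walls}), so it is worth recording.
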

\begin{proof}
Setting
$$
\varphi:=\alpha(mN-N+2),
$$
we compute the matrix of the linearization of the system \eqref{PSsyst1} near $P_2$ as follows
$$
M(P_2)=\frac{1}{\varphi}\left(
  \begin{array}{ccc}
    -(m-1) & \frac{(m-1)^2}{2} & 0 \\[1mm]
    \varphi-N & -\frac{\beta\varphi}{\alpha}-\frac{mN-N+4}{2} & -\frac{m-1}{2} \\[1mm]
    0 & 0 & \sigma(m-1)+2(p-1) \\
  \end{array}
\right)
$$
with eigenvalues $\lambda_1$, $\lambda_2$ and
$$
\lambda_3=\frac{2(p-1)+\sigma(m-1)}{\varphi}>0, \qquad {\rm for} \ \sigma>-\frac{2(p-1)}{m-1}.
$$
Since $M(P_2)$ is a block matrix, by standard linear algebra results related to the trace and the determinant we have
$$
\lambda_1+\lambda_2<0, \qquad \lambda_1\lambda_2=\frac{m-1}{2\alpha\varphi}>0,
$$
hence either $\lambda_1<0$ and $\lambda_2<0$ or $\lambda_1$, $\lambda_2$ are conjugated complex numbers with negative real parts. Moreover, the eigenvectors $e_1$ and $e_2$ of $M(P_2)$ corresponding to these eigenvalues have zero $Z$-component, thus the critical point $P_2$ is a local stable node or stable focus inside the invariant plane $\{Z=0\}$. Since $\lambda_3>0$, there exists a unique orbit going out of $P_2$ towards the interior of the phase space, tangent to the eigenvector $e_3$ corresponding to the eigenvalue $\lambda_3$, whose components are
\begin{equation}\label{vector.P2}
\begin{split}
&X(\sigma)=-(m-1)^3<0, \qquad Y(\sigma)=-(m-1)[(\sigma+2)(m-1)+2(p-1)]<0,\\
&Z(\sigma)=(\sigma+2)^2(m-1)^2+(\sigma+2)[(N-2)(m-1)^2+4p(m-1)]\\&+2N(m-1)^2+4(p-1)^2-4(m-2)(p-1)>0.
\end{split}
\end{equation}
We thus deduce that the orbit going out of $P_2$ starts in a small neighborhood of the point decreasingly in $X$ and $Y$ and (of course) increasingly in $Z$. Moreover, this orbit contains profiles such that, in a first approximation, $X(\xi)\sim X(P_2)$, which readily leads to \eqref{beh.P2}. Since
$$
Z(\xi)=\frac{1}{\alpha}\xi^{\sigma}f(\xi)^{p-1}\sim K\xi^{(\sigma(m-1)+2(p-1))/(m-1)}\to0,
$$
we infer that the local behavior \eqref{beh.P2} is taken as $\xi\to0$, ending the proof.
\end{proof}
We are left with the family of critical points $P_{\gamma}$.
\begin{lemma}[Local analysis near $P_{\gamma}$]\label{lem.Pg}
For
\begin{equation}\label{eq.gamma}
\gamma=\gamma_0:=\frac{1}{\alpha(p-1)},
\end{equation}
the critical point $P_{\gamma_0}$ behaves as an attractor for the orbits approaching it from the half-space $\{X>0\}$. The profiles contained in the orbits entering $P_{\gamma_0}$ have a tail at infinity
\begin{equation}\label{queue}
f(\xi)\sim\left(\frac{1}{p-1}\right)^{1/(p-1)}\xi^{-\sigma/(p-1)}, \qquad {\rm as} \ \xi\to\infty.
\end{equation}
For $\gamma\in(0,\infty)$ with $\gamma\neq\gamma_0$, there is no profile contained in any orbit either entering or going out of $P_{\gamma}$.
\end{lemma}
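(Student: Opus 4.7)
The plan is to combine a linearization with a center-manifold reduction and a selection argument for $\gamma$. First I would compute the Jacobian
\begin{equation*}
M(P_\gamma)=\begin{pmatrix} 0 & 0 & 0 \\ 1-\gamma & -\beta/\alpha & 0 \\ \sigma\gamma & (p-1)\gamma & 0 \end{pmatrix},
\end{equation*}
whose eigenvalues are $0$, $0$, $-\beta/\alpha$, with stable eigenvector $(0,1,-\alpha(p-1)\gamma/\beta)$. Since this eigenvector lies in the invariant plane $\{X=0\}$, uniqueness of the stable manifold \cite[Theorem 3.2.1]{GH} forces the one-dimensional stable manifold of $P_\gamma$ to be contained in $\{X=0\}$. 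Orbits inside $\{X=0\}$ correspond through \eqref{PSchange} to $f\equiv0$, so they carry no profile, and every profile-bearing orbit approaching $P_\gamma$ from the half-space $\{X>0\}$ must do so along the 2D center manifold.

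Next I would perform a center-manifold reduction, writing the manifold locally as a graph $Y=h(X,Z)$ and matching leading-order terms in the invariance equation (as in the computation carried out in Lemma \ref{lem.P0}). This yields $h(X,Z)=(\alpha(1-\gamma)/\beta)\,X+O(|(X,Z-\gamma)|^2)$, which captures the quasi-equilibrium relaxation of $Y$ driven by the exponential factor $-\beta/\alpha$. Substituting $Y=h(X,Z)$ back into the first and third equations of \eqref{PSsyst1}, the reduced flow on the center manifold reads, to principal order,
\begin{equation*}
\dot X\sim -\kappa(\gamma)X^2,\qquad \frac{\dot Z}{Z}\sim \frac{\alpha(p-1)}{\beta}(\gamma_0-\gamma)\,X,
\end{equation*}
where $\kappa(\gamma):=2-(m-1)\alpha(1-\gamma)/\beta$, and the factor $\gamma_0-\gamma$ arises from the algebraic identity $(p-1)(\alpha(1-\gamma)/\beta)+\sigma=(\alpha(p-1)/\beta)(\gamma_0-\gamma)$, obtained by substituting the definition \eqref{eq.gamma} of $\gamma_0$.

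The selection of $\gamma_0$ then follows from an integrability argument, which I expect to be the most delicate step and the main obstacle of the proof. Assume an orbit with $X>0$ approaches $P_\gamma$: then $X\to0$ forces $\kappa(\gamma)>0$, and integrating $\dot X\sim -\kappa(\gamma)X^2$ produces the slow decay $X(\eta)\sim 1/(\kappa(\gamma)\eta)$ as $\eta\to\infty$, so that $\int^{\infty}X\,d\eta=+\infty$. Consequently $\int^{\infty}(\dot Z/Z)\,d\eta$ diverges to $\pm\infty$ unless $\gamma_0-\gamma=0$, contradicting $Z\to\gamma\in(0,\infty)$. The same reasoning carried out in reverse time rules out orbits in $\{X>0\}$ emanating from $P_\gamma$, proving the last part of the lemma. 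At $\gamma=\gamma_0$ one checks $\kappa(\gamma_0)=L/(p-1)>0$ in the range \eqref{range.exp}, the reduced flow is consistent with $X\to0$, $Y\sim -(\sigma/(p-1))X\to0$ and $Z\to\gamma_0$, and the stable foliation argument transfers convergence on the center manifold to every orbit in a small neighborhood in $\{X>0\}$, providing the claimed attractor behavior. Finally, the tail \eqref{queue} is recovered by inverting \eqref{PSchange}: from $Z\to 1/(\alpha(p-1))$ we extract $f(\xi)^{p-1}\sim \xi^{-\sigma}/(p-1)$, and substituting this ansatz into \eqref{ind.var} gives $\eta\sim C\xi^{L/(p-1)}$, so $\eta\to\infty$ if and only if $\xi\to\infty$, confirming that \eqref{queue} is taken as $\xi\to\infty$.
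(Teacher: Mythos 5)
Your computation of the Jacobian $M(P_\gamma)$, of the stable eigenvector $(0,1,-\alpha(p-1)\gamma/\beta)$, of the tangent direction $Y\sim\alpha(1-\gamma)X/\beta$ of the center manifold, and the algebraic identity $(p-1)\alpha(1-\gamma)/\beta+\sigma=(\alpha(p-1)/\beta)(\gamma_0-\gamma)$ are all correct, and the divergent-integral argument is a streamlined and genuinely different presentation of the selection of $\gamma_0$: the paper instead performs a double change of variables to canonical coordinates $(X,W,V)$, reads off the reduced flow there, and then argues via $dV/dX\sim C/X\Rightarrow V\sim C\ln X$. The two are equivalent in spirit, but yours avoids most of the bookkeeping of Lemma \ref{lem.Pg}'s proof by parametrizing the center manifold as a graph $Y=h(X,Z)$ directly (which is legitimate here: the center subspace, spanned by $(0,0,1)$ and $(1,\alpha(1-\gamma)/\beta,0)$, projects isomorphically onto the $(X,Z)$-plane, while the $Y$-axis is transverse to it). The tail computation and the observation $\eta\sim C\xi^{L/(p-1)}$ are also correct.

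However, there is a real gap at the end: the phrase ``the reduced flow is consistent with $X\to0$, $Y\sim-(\sigma/(p-1))X\to0$ and $Z\to\gamma_0$'' is not a proof that the reduced flow actually converges to $(0,\gamma_0)$. At $\gamma=\gamma_0$ the leading coefficient of $\dot Z/Z$ is zero, and the set $\{X=0\}$ is a whole line of equilibria of the reduced system, so it is not immediate that $Z$ does not drift to some other value on that line. To see the drift towards $\gamma_0$ you must compute the \emph{next}-order term in the center-manifold expansion; writing $\zeta=Z-\gamma_0$ one finds $h(X,Z)=-\tfrac{\sigma}{p-1}X-\tfrac{\alpha}{\beta}X\zeta+O(X^2)+\dots$, which yields $\dot\zeta\sim-\tfrac{1}{\beta}X\zeta+O(X^2)$, and only then can one conclude $\zeta\to0$. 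The paper finishes this step by changing to the slow time $\theta$ with $d\theta=X\,d\eta$, after which the origin of the reduced system becomes a genuine hyperbolic stable node (eigenvalues $-L/(p-1)$ and $-1/\beta$), and then invokes Carr's reduction lemma; a Gronwall/integrating-factor estimate would serve equally well. Without one of these arguments, the stable-foliation step has no convergence on the center manifold to ``transfer,'' so the attractor claim is unproven. Everything else in your argument holds.
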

\begin{proof}
We follow similar steps as in the proof of \cite[Lemma 2.4]{IS21a} or \cite[Lemma 2.3]{IMS21}. We first translate the critical point to the origin by letting $Z=\overline{Z}+\gamma$ and obtaining the new system
\begin{equation}\label{interm5}
\left\{\begin{array}{ll}\dot{X}=X[(m-1)Y-2X],\\
\dot{Y}=-Y^2-\frac{\beta}{\alpha}Y+(1-\gamma)X-NXY-X\overline{Z},\\
\dot{\overline{Z}}=\overline{Z}[(p-1)Y+\sigma X]+\sigma\gamma X+(p-1)\gamma
Y.\end{array}\right.
\end{equation}
The linearization of the system \eqref{interm5} near the origin has a one-dimensional stable manifold and a two-dimensional center manifold. As usual, the most involved part of the analysis is the study of the center manifold. We next perform a double change of variable in \eqref{PSsyst1} in order to reduce some linear terms and to put it into a canonical form, by replacing $Y$ and $\overline{Z}$ with the new variables $W$ and $V$ defined by
$$
W=\frac{\beta}{\alpha}Y-(1-\gamma)X, \qquad V=\overline{Z}+kY, \qquad k=\frac{\alpha(p-1)\gamma}{\beta},
$$
to get the following new system in variables $(X,W,V)$:
\begin{equation}\label{interm6}
\left\{\begin{array}{ll}\dot{X}=\left[\frac{(m-1)\alpha(1-\gamma)}{\beta}-2\right]X^2+\frac{(m-1)\alpha}{\beta}WX,\\
\dot{W}=-\frac{\beta}{\alpha}W-\frac{\alpha}{\beta}W^2-\frac{\beta}{\alpha}VX-D_1X^2-D_2XW,\\
\dot{V}=(\gamma\sigma+k(1-\gamma))X+D_3XV-D_4X^2-\frac{\alpha^2kp}{\beta^2}W^2-D_5XW+\frac{\alpha(p-1)}{\beta}VW,\end{array}\right.
\end{equation}
with coefficients
\begin{equation}\label{interm6b}
\begin{split}
&D_1=\frac{(1-\gamma)[(N-2)\beta+m\alpha(1-\gamma)-\alpha\gamma(p-1)]}{\beta}, \ D_2=\frac{N\beta+\alpha(m+1)-\alpha\gamma(m+p)}{\beta},\\
&D_3=\sigma-k+\frac{(p-1)\alpha(1-\gamma)}{\beta}, \ D_4=\frac{(1-\gamma)\alpha^2\gamma(p-1)[(N+\sigma)\beta+\alpha(\gamma+p-2p\gamma)]}{\beta^3},\\
&D_5=\frac{\alpha^2\gamma(p-1)[(N+\sigma)\beta+\alpha(\gamma+2p-3p\gamma)]}{\beta^3}.
\end{split}
\end{equation}
By applying the center manifold theorem \cite[Theorem 1, Section 2.12]{Pe} to the system \eqref{interm6}, we obtain that the two-dimensional center manifold at the origin in \eqref{interm6} has the form
$$
W=AX^2-XV+XO(|(X,V)|^2), \qquad A=\frac{\alpha}{\beta}\left[\gamma\sigma+k(1-\gamma)-D_1\right],
$$
where $D_1$ is the first coefficient in \eqref{interm6b} and the fact that the higher order terms are a multiple of $X$ follows by induction taking into account the non-appearance of pure powers of $V$ in the vector field of the system \eqref{interm6}. Therefore, the flow on the center manifold is given by the following reduced system
\begin{equation}\label{flowgamma}
\left\{\begin{array}{ll}\dot{X}=\left[\frac{(m-1)\alpha(1-\gamma)}{\beta}-2\right]X^2+X^2O(|(X,V)|),\\
\dot{V}=\left[\sigma\gamma+k(1-\gamma)\right]X+\left[\sigma-k+\frac{k(1-\gamma)}{\gamma}\right]XV-DX^2+XO(|(X,V)|^2),\end{array}\right.
\end{equation}
with
$$
D=\frac{k(1-\gamma)\alpha[(N+\sigma-k)\beta+\alpha p(1-\gamma)]}{\beta^2}.
$$
We omit here the detailed calculations leading to all the previous expressions, since they are rather tedious but straightforward. We next introduce a new independent variable by setting, for $X>0$,
$$
d\theta=X d\eta,
$$
mapping the reduced system \eqref{flowgamma} into the system obtained by dividing by $X$ its both equations. We then notice that $(X,V)=(0,0)$ remains a critical point with respect to this new variable and there are orbits connecting to it if and only if the linear term in the equation for $\dot{V}$ vanishes, that is
$$
\sigma\gamma+k(1-\gamma)=\frac{\gamma}{\beta}\left[\sigma\beta+(1-\gamma)\alpha(p-1)\right]=0,
$$
which leads to either $\gamma=0$ (not of interest here) or $\gamma=\gamma_0$ defined in \eqref{eq.gamma}. Let us fix now $\gamma=\gamma_0$. Then the reduced system \eqref{flowgamma} becomes with respect to the new variable $\theta$
\begin{equation}\label{flowgamma2}
\left\{\begin{array}{ll}\dot{X}=-\frac{\sigma(m-1)+2(p-1)}{p-1}X+XO(|(X,V)|),\\
\dot{V}=-\frac{1}{\beta}V-DX+O(|(X,V)|^2),\end{array}\right.
\end{equation}
thus the origin of it is a stable node. It follows that the center manifold is in fact stable and thus the point $P_{\gamma_0}$ behaves like an attractor for all orbits coming from the half-space $\{X>0\}$ according to \cite[Lemma 1, Section 2.4]{Carr}, since also the only nonzero eigenvalue is negative. The orbits entering this point contain profiles such that $Z\sim\gamma_0$, which leads to the tail behavior \eqref{queue} after undoing the change of variable \eqref{PSchange}. Moreover, since $X\to0$ at $P_{\gamma_0}$, we also infer that this behavior is taken as $\xi\to\infty$. For $\gamma\neq\gamma_0$, a simple integration of \eqref{flowgamma} in a neighborhood of $P_{\gamma}$ and in the half-space $\{X>0\}$ leads in a first approximation to
$$
\frac{dV}{dX}\sim\frac{C}{X}, \qquad C=\sigma\gamma+k(1-\gamma)\neq0,
$$
whence $V\sim C\ln\,X$ and this is no longer an orbit passing through the origin $(X,V)=(0,0)$. We thus infer that there are no orbits either entering or going out of $P_{\gamma}$ with $\gamma\neq\gamma_0$ except for the ones fully included in the invariant plane $\{X=0\}$, which do not contain profiles.
\end{proof}

\noindent \textbf{Changes for $p=1$.} In the case $p=1$ there is a first change with respect to the critical point $P_1$, which now expands into a critical line
$$
P_1^{\gamma}=\left(0,-\frac{\beta}{\alpha},\gamma\right), \qquad \gamma>0.
$$
\begin{lemma}[Local analysis near $P_1^{\gamma}$ for $p=1$]\label{lem.P1p1}
For any $\gamma>0$, the critical point $P_1^{\gamma}$ has a one-dimensional stable manifold, a one-dimensional unstable manifold and a one-dimensional center manifold. The orbits entering $P_1^{\gamma}$ on the stable manifolds of these points contain profiles with interface behaving as in \eqref{beh.P1}, while the unstable manifolds are all contained in the invariant plane $\{X=0\}$ and the center manifold is unique for each $\gamma\in(0,\infty)$ and contained in the line $\{X=0, Y=-\beta/\alpha\}$.
\end{lemma}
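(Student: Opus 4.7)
The plan is to adapt the local analysis of Lemma \ref{lem.P1} to the degenerate case $p=1$, where the third equation of the system \eqref{PSsyst1} reduces to $\dot Z = \sigma X Z$ and an entire line of critical points appears.

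First, I compute the matrix of the linearization of the system \eqref{PSsyst1} with $p=1$ at the point $P_1^{\gamma}=(0,-\beta/\alpha,\gamma)$. Since the linear term in $Y$ in the third equation vanishes, I obtain
$$
M(P_1^{\gamma})=\left(\begin{array}{ccc} -\frac{(m-1)\beta}{\alpha} & 0 & 0 \\[1mm] 1+\frac{N\beta}{\alpha}-\gamma & \frac{\beta}{\alpha} & 0 \\[1mm] \sigma\gamma & 0 & 0 \end{array}\right),
$$
whose eigenvalues are $\lambda_1=-(m-1)\beta/\alpha<0$, $\lambda_2=\beta/\alpha>0$ and $\lambda_3=0$, which gives at once the announced dimensions of the stable, unstable and center manifolds.

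Next, I would identify the three invariant directions. A direct computation shows that the unstable eigenspace is spanned by $(0,1,0)$, which lies in the invariant plane $\{X=0\}$; since the unstable manifold is unique and tangent to this direction, it must be entirely contained in $\{X=0\}$, where it reduces to an orbit of the planar flow restricted to $X=0$. For the center direction, the null eigenspace is spanned by $(0,0,1)$, and I would note that the whole line $\ell:=\{(0,-\beta/\alpha,z):z\geq 0\}$ consists of critical points of \eqref{PSsyst1} for $p=1$ (substitute to verify each component of the vector field vanishes on $\ell$). Thus $\ell$ is an invariant $C^{\infty}$ curve tangent to the center subspace at $P_1^{\gamma}$, hence $\ell$ is a (trivially smooth) center manifold, and uniqueness follows from the fact that any center manifold through $P_1^\gamma$ must be tangent to $(0,0,1)$, be one-dimensional, and be invariant; since any such curve must, by the hyperbolicity of the other two directions, consist of fixed points in a neighborhood of $P_1^\gamma$, and the full set of fixed points near $P_1^\gamma$ is exactly $\ell$, the center manifold coincides with $\ell$.

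Finally, the stable eigenvector computed from $(M+(m-1)\beta/\alpha\, I)v=0$ has nonzero $X$-component (it can be taken of the form $(1,v_2,v_3)$ with $v_2$ and $v_3$ explicit in terms of $\gamma$, $\sigma$ and the exponents), so orbits on the one-dimensional stable manifold enter $P_1^{\gamma}$ from the half-space $\{X>0\}$ with $Y\to-\beta/\alpha$, $X\to 0$ and $Z\to\gamma$. The translation of this convergence into the behavior \eqref{beh.P1} then proceeds exactly as in the proof of Lemma \ref{lem.P1}: the alternative $\xi\to\infty$ is ruled out by combining $\xi X'(\xi)=-2X+(m-1)Y$ with the application of \cite[Lemma 2.9]{IL13} to $X(\xi)$, which would force $(m-1)Y(\xi_k)\to 0$ along some sequence and contradict $Y\to-\beta/\alpha$; hence $Y\to-\beta/\alpha$ as $\xi\to\xi_0\in(0,\infty)$, giving $(f^{m-1})'(\xi)\sim\beta(m-1)\xi/m$ and \eqref{beh.P1} after integration on $(\xi,\xi_0)$.

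The main technical subtlety is the uniqueness of the center manifold: general center manifold theorems do not ensure uniqueness, so I must exploit the rather special circumstance that $\ell$ is a curve of equilibria, which forces any invariant manifold tangent to $(0,0,1)$ at $P_1^{\gamma}$ to coincide with $\ell$ locally. Apart from this point, the proof is a direct transposition of the arguments used in the $p>1$ case.
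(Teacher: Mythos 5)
Your linearization, the identification of the three one-dimensional invariant subspaces, the observation that the whole line $\ell=\{(0,-\beta/\alpha,z)\}$ is a curve of equilibria for $p=1$, and the translation of the stable-manifold behavior into the interface asymptotics \eqref{beh.P1} are all correct and match the natural adaptation of Lemma \ref{lem.P1} (the paper itself simply cites \cite[Lemma 2.2]{IS19} and omits the argument).

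There is, however, a genuine error in the way you justify uniqueness of the center manifold. You assert that ``any such curve must, by the hyperbolicity of the other two directions, consist of fixed points in a neighborhood of $P_1^\gamma$.'' That implication is false: hyperbolicity of the transverse directions says nothing about the reduced flow on a center manifold. The planar system $\dot x=-x$, $\dot y=y^2$ already falsifies the claim: the $y$-axis is the one-dimensional center manifold, the transverse direction is hyperbolic, yet the origin is the only equilibrium. What makes your conclusion true is a different mechanism: every orbit that stays in a small neighborhood of the equilibrium for all $\eta\in\real$ (in particular, every nearby equilibrium) must lie on \emph{any} local center manifold; since the set of equilibria near $P_1^\gamma$ is exactly the smooth one-dimensional curve $\ell$, and a center manifold here is also one-dimensional, the two must coincide locally. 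Replacing your hyperbolicity argument with this ``bounded orbits live on the center manifold'' argument closes the gap.

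Two smaller points. First, there is a sign inherited from the statement of Lemma \ref{lem.P1}: one has $(f^{m-1})'(\xi)\sim -\beta(m-1)\xi/m$ (not $+$), which is what the decreasing expression inside the bracket in \eqref{beh.P1} actually requires. Second, for $p=1$ the identification of the interface point is more elementary than you make it: since $Z(\xi)=\xi^\sigma/\alpha$ is a deterministic, strictly increasing (recall $\sigma>0$ when $p=1$) function of $\xi$ that does not involve $f$, the convergence $Z\to\gamma\in(0,\infty)$ forces $\xi\to\xi_0=(\alpha\gamma)^{1/\sigma}$ directly, with no need to invoke the monotonicity lemma \cite[Lemma 2.9]{IL13} to exclude $\xi\to\infty$; this is exactly the observation the paper records after the lemma statement.
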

We omit here the proof, as it is given in \cite[Lemma 2.2]{IS19}, where the interface point of the profile entering $P_1^{\gamma}$ is related to $\gamma$ by $\xi_0=(\alpha\gamma)^{1/\sigma}$.

Another significant change in the case $p=1$ comes from the analysis of the critical points $P_{\gamma}$. Indeed, $\gamma_0$ as in \eqref{eq.gamma} is no longer well defined for $p=1$, and the expectation is that the attractor moves to the infinity of the phase space, a fact already noticed in \cite[Lemma 2.10]{IS19} in dimension $N=1$. This specific case will be addressed in Section \ref{sec.infty}. This also means that, for $p=1$ and any $\gamma\in(0,\infty)$, the critical points $P_{\gamma}$ do not contain interesting orbits, according to the proof of Lemma \ref{lem.Pg}.

\section{Local analysis of the critical points at infinity}\label{sec.infty}

In order to understand all the possible local behaviors of the profiles, a local analysis of the critical points of the system \eqref{PSsyst1} lying at infinity is also required. For this analysis we follow the theory in \cite[Section 3.10]{Pe} by passing to the Poincar\'e hypersphere through the new variables $(\overline{X},\overline{Y},\overline{Z},W)$ defined as
$$
X=\frac{\overline{X}}{W}, \qquad Y=\frac{\overline{Y}}{W}, \qquad Z=\frac{\overline{Z}}{W}
$$
and obtaining that the critical points at space infinity solve the following system
\begin{equation}\label{Poincare}
\left\{\begin{array}{ll}\overline{X}(\overline{X}\overline{Z}+(N-2)\overline{X}\overline{Y}+m\overline{Y}^2)=0,\\
\overline{X}\overline{Z}[(\sigma+2)\overline{X}+(p-m)\overline{Y}]=0,\\
\overline{Z}[p\overline{Y}^2+(\sigma+N)\overline{X}\overline{Y}+\overline{X}\overline{Z}]=0,\end{array}\right.
\end{equation}
together with the condition of belonging to the equator of the hypersphere, which leads to the additional equation $\overline{X}^2+\overline{Y}^2+\overline{Z}^2=1$. Taking into account that we are considering only points with coordinates $\overline{X}\geq0$ and $\overline{Z}\geq0$ and that we are working in dimension $N\geq2$ (or $N=1$ with $\sigma>-1$), we find the following critical points on the Poincar\'e hypersphere:
\begin{equation*}
\begin{split}
&Q_1=(1,0,0,0), \ \ Q_{2,3}=(0,\pm1,0,0), \ \ Q_4=(0,0,1,0), \\
&Q_5=\left(\frac{m}{\sqrt{(N-2)^2+m^2}},-\frac{N-2}{\sqrt{(N-2)^2+m^2}},0,0\right).
\end{split}
\end{equation*}
For the main part of this section, and in order to avoid exceptional cases, we assume (unless specified something else) that $N\geq3$ and $p>1$. Under these assumptions, we analyze one by one the critical points below.

\medskip

\noindent \textbf{Local analysis near $Q_1$ and $Q_5$}. In order to analyze the flow of the system \eqref{PSsyst1} near these points, we introduce a new change of variable following \cite[Theorem 5(a), Section 3.10]{Pe} in order to translate them into the finite part of a new phase space topologically equivalent to the original one. We thus let
\begin{equation}\label{interm7}
y=\frac{Y}{X}, \qquad z=\frac{Z}{X}, \qquad w=\frac{1}{X},
\end{equation}
to obtain the system
\begin{equation}\label{systinf1}
\left\{\begin{array}{ll}\dot{y}=-(N-2)y-z+w-my^2-\frac{\beta}{\alpha}yw,\\
\dot{z}=(\sigma+2)z+(p-m)yz,\\
\dot{w}=2w-(m-1)yw,\end{array}\right.
\end{equation}
where the minus sign has been chosen in the general framework of \cite[Theorem 5, Section 3.10]{Pe}, since in the equation for $\dot{X}$ in the original system \eqref{PSsyst1} it occurs that $\dot{X}<0$ in a neighborhood of $Q_1$ (which is obvious since $|X/Y|\to+\infty$ near this point). We thus notice that in the new system \eqref{systinf1} the critical point $Q_1$ is mapped into its origin $(y,z,w)=(0,0,0)$ and the critical point $Q_5$ into the point $(y,z,w)=(-(N-2)/m,0,0)$.
\begin{lemma}[Local analysis near $Q_1$]\label{lem.Q1}
Let $N\geq3$. The system \eqref{systinf1} in a neihghborhood of $Q_1$ has a two-dimensional unstable manifold and a one-dimensional stable manifold. The orbits going out on the unstable manifold contain profiles with the local behavior \eqref{beh.Q1} as $\xi\to0$.
\end{lemma}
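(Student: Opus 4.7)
The plan is to apply to the infinity critical point $Q_1$ the same recipe already used for the finite critical points in Section~\ref{sec.local}: linearize the projected system \eqref{systinf1} at its origin (the image of $Q_1$ in the chart \eqref{interm7}), read the dimensions of the invariant manifolds off the spectrum, and then translate an outgoing orbit back into profile form via the inverses of \eqref{PSchange} and \eqref{interm7}.

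The first step is a direct computation: since only the explicitly linear terms in each equation of \eqref{systinf1} contribute at first order, the Jacobian at the origin is the upper triangular matrix
\[
M(Q_1)=\begin{pmatrix}-(N-2)&-1&1\\ 0&\sigma+2&0\\ 0&0&2\end{pmatrix},
\]
with spectrum $\{-(N-2),\,\sigma+2,\,2\}$. In the standing range ($N\geq3$ and $\sigma>-2(p-1)/(m-1)>-2$), the first eigenvalue is strictly negative and the other two are strictly positive, so the stable manifold theorem yields a one-dimensional stable manifold tangent to the $y$-axis (the eigendirection of $-(N-2)$) and a two-dimensional unstable manifold tangent to the plane spanned by the eigenvectors $(1,-(N+\sigma),0)$ and $(1,0,N)$. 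In the resonant case $\sigma=0$ the double eigenvalue $2$ still admits a two-dimensional eigenspace, so no Jordan block arises and the previous description persists.

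The second step converts an orbit leaving $Q_1$ into profile asymptotics. Along such an orbit $w=1/X\to0^+$, so $X\to+\infty$; and the identity $X=(m/\alpha)\xi^{-2}f(\xi)^{m-1}$ is compatible with $f$ staying positive and bounded only if $\xi\to0^+$ with $f(0)=a>0$. To recover the precise correction to $a$ in the three lines of \eqref{beh.Q1}, I would insert the ansatz $f(\xi)=a+g(\xi)$, $g(0)=0$, into \eqref{SSODE} and identify the dominant balance: for $\sigma>0$ the reaction term $\xi^\sigma f^p$ is subdominant and the balance of the diffusion terms with $\alpha f$ forces $g\sim C\xi^2$ with the coefficient in the first line of \eqref{beh.Q1}; for $\sigma<0$ the reaction dominates $\alpha f$ near the origin and its balance with the diffusion forces $g\sim C\xi^{\sigma+2}$ as in the third line; for $\sigma=0$ both contributions enter at the same order and produce the mixed coefficient of the middle line. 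A consistency check against the linear flow on the unstable manifold---dominated by the $\lambda_3=2$ eigendirection when $\sigma>0$ and by the $\lambda_2=\sigma+2$ eigendirection when $\sigma<0$---then confirms that every orbit on the unstable manifold carries one of these behaviors.

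The step I expect to be most delicate is the resonant case $\sigma=0$, where $\lambda_2$ and $\lambda_3$ collide and the two dominant balances used above have to be merged: the coefficient of the leading quadratic correction must be obtained by solving a small algebraic equation involving both $\alpha$ and the free constant $K$. Fortunately the linearization remains diagonalizable there, so no logarithmic scales appear, and this bookkeeping should produce the sharp coefficient $(m-1)(\alpha-K^{(p-1)/(m-1)})/(2mN)$ stated in the middle line of \eqref{beh.Q1}.
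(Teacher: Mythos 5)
Your linearization, spectrum, and eigenvector computations are all correct, including the observation that for $\sigma=0$ the double eigenvalue $2$ still has a two-dimensional eigenspace. The route you take afterwards — translating the outgoing orbit to the ODE \eqref{SSODE}, writing $f=a+g$, and reading off the coefficient of the correction by dominant balance — is a legitimate and in fact rather transparent alternative to the paper's coordinate-by-coordinate integration in the chart \eqref{interm7}. Your three balances do reproduce the three lines of \eqref{beh.Q1} once one knows that $f$ tends to a positive finite constant as $\xi\to0$.

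The gap is precisely there: you do not establish that $f(\xi)\to a>0$. Your sentence asserts that $X=(m/\alpha)\xi^{-2}f^{m-1}\to\infty$ is ``compatible with $f$ staying positive and bounded only if $\xi\to0^+$ with $f(0)=a>0$,'' but this is circular — boundedness of $f$ is assumed rather than derived, and even granting boundedness, $X\to\infty$ as $\xi\to0$ is perfectly compatible with $f(0)=0$ (take $f\sim\xi^{1/m}$, which still gives $X\to\infty$). The paper's proof pins down $f\sim K$ \emph{before} any ansatz, by integrating the leading balance $dz/dw\sim(\sigma+2)z/(2w)$ along the unstable manifold to get $z\sim Cw^{(\sigma+2)/2}$, i.e. $Z\sim KX^{-\sigma/2}$; together with $\sigma(m-1)+2(p-1)>0$ this forces $\xi^\sigma f^{p-1}\sim\xi^\sigma f^{-\sigma(m-1)/2}$, hence $f$ tends to a positive constant. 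That nontrivial step is what licenses your expansion $f=a+g$, and without it the dominant-balance argument is not grounded. If you supply this piece — which is exactly the $z$--$w$ relation you already have the ingredients for — the rest of your proof is a clean alternative to the paper's.
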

\begin{proof}
The linerization of the system \eqref{systinf1} in a neighborhood of $(y,z,w)=(0,0,0)$ has the matrix
$$
M(Q_1)=\left(
         \begin{array}{ccc}
           -(N-2) & -1 & 1 \\
           0 & \sigma+2 & 0 \\
           0 & 0 & 2 \\
         \end{array}
       \right),
$$
with two positive eigenvalues and one negative eigenvalue. The stable manifold is contained in the plane $\{w=0\}$, corresponding to $X=\infty$. The orbits going out on the unstable manifold satisfy $dz/dw\sim(\sigma+2)z/2w$, which after integration reads $z\sim Cw^{(\sigma+2)/2}$. By undoing the change of variable \eqref{interm7} we find that $Z\sim KX^{-\sigma/2}$, $K>0$, which in terms of profiles gives $f(\xi)\sim K$ for some $K>0$, and this asymptotic behavior is taken as $\xi\to0$ since $X\to\infty$ at $Q_1$. The analysis splits now into three cases with respect to the range of $\sigma$ as follows:

$\bullet$ if $\sigma<0$, we have $z/w\sim Cw^{\sigma/2}\to+\infty$ in a neighborhood of $Q_1$, thus in a first approximation the $w$ term in the first equation in \eqref{systinf1} is dominated by the $z$ term and the following asymptotic approximation holds true
$$
\frac{dy}{dz}\sim-\frac{N-2}{\sigma+2}\frac{y}{z}-\frac{1}{\sigma+2}.
$$
This can be integrated to deduce next that
\begin{equation}\label{interm9}
y\sim-\frac{z}{N+\sigma}+Kz^{-(N-2)/(\sigma+2)}.
\end{equation}
We have to take $K=0$ in \eqref{interm9}, since we are looking for orbits passing through the origin, hence $y/z=Y/Z\sim-1/(N+\sigma)$. Translating this in terms of profiles via \eqref{PSchange} and integrating the resulting differential equation, we are led to the local behavior given by \eqref{beh.Q1} with $\sigma<0$.

$\bullet$ if $\sigma>0$, we have $z/w\sim Cw^{\sigma/2}\to0$ in a neighborhood of $Q_1$, thus in a first approximation the $z$ term in the first equation in \eqref{systinf1} is dominated by the $w$ term and the following asymptotic approximation holds true
$$
\frac{dy}{dw}\sim-\frac{N-2}{2}\frac{y}{w}+\frac{1}{2},
$$
which gives after integration
\begin{equation}\label{interm8}
y\sim\frac{w}{N}+Kw^{-(N-2)/2}.
\end{equation}
We have to take again $K=0$ in \eqref{interm8}, as we are looking for orbits passing through the origin, hence $y/w=Y\sim 1/N$ in a neighborhood of $Q_1$. Recalling the definition of $Y$ in \eqref{PSchange} and integrating, we obtain the local behavior \eqref{beh.Q1} with $\sigma>0$.

$\bullet$ if $\sigma=0$, we observe that $z\sim Cw$ in a neighborhood of $Q_1$, and the constant $C>0$ is connected at the level of profiles with the value of $f(0)$. Indeed, we get $z/w=Z\to C$ as $\xi\to0$, and recalling that we are in the case $\sigma=0$, we get $f(0)=(\alpha C)^{1/(p-1)}$. We furthermore deduce from the first and third equations of the system \eqref{systinf1} and the local approximation $z=Cw$ that
$$
\frac{dy}{dw}\sim-\frac{N-2}{2}\frac{y}{w}+\frac{1-C}{2},
$$
which gives by integration
\begin{equation}\label{interm8bis}
y\sim\frac{(1-C)w}{N}+Kw^{-(N-2)/2}.
\end{equation}
We notice again that we have to take $K=0$ in \eqref{interm8bis} and then undoing the change of variable, we are left with $Y\sim(1-C)/N$. A final integration step gives the local behavior \eqref{beh.Q1} in this case. In particular, for $C=1$ we obtain a profile included in the plane $\{Y=0\}$ leading to the constant solution mentioned in the Introduction.
\end{proof}

\noindent \textbf{Remark.} Let us emphasize here that in a neighborhood of $Q_1$ and with $\sigma>0$, we have found that $Y\sim 1/N$. This information will be very important in the last section of the paper.

\begin{lemma}[Local analysis near $Q_5$]\label{lem.Q5}
Let $N\geq3$. The critical point $Q_5$ is an unstable node. The orbits going out of it into the finite part of the phase space contain profiles with a vertical asymptote at $\xi=0$ and the local behavior
\begin{equation}\label{beh.Q5}
f(\xi)\sim C\xi^{-(N-2)/m}, \qquad {\rm as} \ \xi\to0, \qquad C>0 \ {\rm free \ constant}.
\end{equation}
\end{lemma}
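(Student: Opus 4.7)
The plan is to translate $Q_5$ to the origin of the $(y,z,w)$-coordinates of the system \eqref{systinf1}, read off the Jacobian, and use the convenient upper-triangular structure to extract the three eigenvalues explicitly. Setting $\bar y = y + (N-2)/m$ and keeping $z,w$ unchanged, I linearize \eqref{systinf1} at $(\bar y,z,w)=(0,0,0)$. Because the cross term $(p-m)yz$ in the $z$-equation vanishes when $z=0$ and the cross term $-(m-1)yw$ in the $w$-equation vanishes when $w=0$, the Jacobian is upper triangular, and its diagonal entries yield the eigenvalues
\begin{equation*}
\lambda_1 = N-2,\qquad \lambda_2 = (\sigma+2)+\frac{(m-p)(N-2)}{m},\qquad \lambda_3 = 2+\frac{(m-1)(N-2)}{m}.
\end{equation*}
I would then verify that all three are strictly positive under \eqref{range.exp} together with $N\geq 3$: $\lambda_1>0$ from $N\geq 3$; $\lambda_3>0$ trivially; and $\lambda_2>0$ follows since $\sigma+2>0$ (implied by $\sigma>-2(p-1)/(m-1)>-2$, using $p>1$) and $m>p$. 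Therefore $Q_5$ is an unstable node.

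Next I would extract the profile behavior along the orbits leaving $Q_5$. At this point $y=Y/X\to -(N-2)/m$, and a direct calculation from the definitions \eqref{PSchange} gives
\begin{equation*}
\frac{Y}{X}=\frac{\xi f'(\xi)}{f(\xi)}.
\end{equation*}
Hence, in a first approximation, $\xi f'(\xi)/f(\xi)\sim -(N-2)/m$, which integrates immediately to $f(\xi)\sim C\xi^{-(N-2)/m}$ for a free positive constant $C$. Since $(N-2)/m>0$ for $N\geq 3$, this indeed exhibits a vertical asymptote, establishing \eqref{beh.Q5} at the level of the leading term.

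Finally, I would confirm that this asymptotic is attained as $\xi\to 0^+$, not at infinity, by substituting the candidate behavior back into the formula for $X$ in \eqref{PSchange}. This gives $X(\xi)\sim C'\xi^{-(mN-N+2)/m}$, and since $mN-N+2>0$, the variable $X$ blows up precisely as $\xi\to 0^+$, in agreement with $X=1/w\to\infty$ at $Q_5$.

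The main obstacle in this lemma is essentially cosmetic: the special product structure of the nonlinearities in \eqref{systinf1} makes the Jacobian upper triangular and the eigenvalues explicit, so the only genuine care needed lies in establishing the sign of $\lambda_2$, for which the full strength of the exponent condition $\sigma>-2(p-1)/(m-1)$ is used. Everything else reduces to straightforward integration of the first-order ODE for $f$ coming from the limit $Y/X\to -(N-2)/m$.
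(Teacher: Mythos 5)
Your proof is correct and follows the same standard route that the paper invokes by deferring to \cite[Lemma~3.2]{IMS21}: translate $Q_5$ to the origin of the $(y,z,w)$ system \eqref{systinf1}, exploit the upper-triangular Jacobian to read off the three positive eigenvalues, then integrate $\xi f'/f\sim Y/X\to -(N-2)/m$ to obtain \eqref{beh.Q5} and verify via $X\to\infty$ that the limit is $\xi\to 0^+$. One small remark: the inequality $-2(p-1)/(m-1)>-2$ is a consequence of $p<m$ rather than of $p>1$ (for $p=1$ the lower bound is simply $0$), but this does not affect your conclusion that $\sigma+2>0$ throughout \eqref{range.exp}.
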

Since the local analysis near $Q_5$ does not depend on $\sigma$, the proof is completely identical to the one of \cite[Lemma 3.2]{IMS21}.

\medskip

\noindent \textbf{Bifurcation in dimension $N=2$}. We notice that the points $Q_1$ and $Q_5$ coincide in dimension $N=2$. Keeping for convenience the label $Q_1$ for this mixed point, its local analysis is different.
\begin{lemma}[Local analysis near $Q_1$ for $N=2$]\label{lem.Q15N2}
Let $N=2$. Then the critical point $Q_1$ is a saddle-node in the sense of the theory in \cite[Section 3.4]{GH}. There exists a two-dimensional unstable manifold on which the orbits contain good profiles with local behavior given by \eqref{beh.Q1} as $\xi\to0$. All the rest of the orbits going out of $Q_1$ contain profiles with a vertical asymptote at $\xi=0$ given by
\begin{equation}\label{beh.Q12}
f(\xi)\sim K\left(-\ln\,\xi\right)^{1/m}, \qquad {\rm as} \ \xi\to0, \qquad K>0 \ {\rm free \ constant}.
\end{equation}
\end{lemma}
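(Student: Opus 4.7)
The plan is to identify the saddle-node character of $Q_1$ from the spectrum of its linearization, and then to treat separately the two-dimensional unstable manifold, which will produce \eqref{beh.Q1}, and the one-dimensional center-unstable direction, which will produce \eqref{beh.Q12}. Computing the linearization of \eqref{systinf1} at the origin with $N=2$ yields
\[
M(Q_1)=\begin{pmatrix} 0 & -1 & 1 \\ 0 & \sigma+2 & 0 \\ 0 & 0 & 2 \end{pmatrix},
\]
with eigenvalues $0$, $\sigma+2$ and $2$: two positive and one zero. By the center manifold theorem (see e.g. \cite[Theorem 3, Section 2.5]{Carr}) there is a two-dimensional unstable manifold $W^u(Q_1)$ tangent to the plane spanned by the eigenvectors for $\sigma+2$ and $2$, together with a one-dimensional center manifold $W^c(Q_1)$ tangent to the $y$-axis. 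The saddle-node terminology in the sense of \cite[Section 3.4]{GH} is justified in the next paragraph by exhibiting the normal form $\dot y=-my^2$ on $W^c(Q_1)$.

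For the unstable manifold I would follow the scheme of the proof of Lemma \ref{lem.Q1} with $N=2$: the second and third equations of \eqref{systinf1} still give the leading-order relation $z\sim C w^{(\sigma+2)/2}$ along orbits going out of $Q_1$ on $W^u(Q_1)$, and then, with the $(N-2)y$ term now absent from the first equation of \eqref{systinf1}, the three sub-cases according to the sign of $\sigma$ produce the balances $y\sim w/2$ for $\sigma>0$, $y\sim -z/(\sigma+2)$ for $\sigma<0$, and the mixed $\sigma=0$ case. Each of them translates, through the changes of variable \eqref{interm7} and \eqref{PSchange} and a final integration in $\xi$, into the corresponding branch of \eqref{beh.Q1} with the constants specialized at $N=2$.

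For the center direction, the key observation is that $\{z=0,\,w=0\}$ is invariant for \eqref{systinf1}, as both $\dot z$ and $\dot w$ vanish identically there; being one-dimensional and tangent to the $y$-axis at $Q_1$, it coincides locally with $W^c(Q_1)$, and the reduced flow on it is exactly $\dot y=-my^2$, the saddle-node normal form, whose branch with $y<0$ is the unique orbit approaching $Q_1$ as $\tilde\eta\to -\infty$ with $y(\tilde\eta)\sim 1/(m\tilde\eta)$. To extract the profile behavior I would use two elementary identities derived from \eqref{PSchange} and \eqref{ind.var}: first, $y=Y/X=\xi f'(\xi)/f(\xi)$; second, the time variable of \eqref{systinf1} (obtained by the rescaling $d\tilde\eta=X\,d\eta$ in \eqref{PSsyst1}) satisfies $d\tilde\eta=d\xi/\xi$, hence $\tilde\eta=\ln\xi+\mathrm{const}$. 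Substituting gives $\xi f'/f\sim 1/(m\ln\xi)$ as $\xi\to 0^+$, and integrating the resulting relation $d\xi/(\xi\ln\xi)$ yields $\ln f\sim (1/m)\ln(-\ln\xi)+\mathrm{const}$, i.e. $f(\xi)\sim K(-\ln\xi)^{1/m}$ with $K>0$ the free constant of integration, which is precisely \eqref{beh.Q12}.

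The main obstacle is that the invariant set $\{z=0,\,w=0\}$ lies on the equator of the Poincar\'e hypersphere, so one must still argue that \eqref{beh.Q12} is realized by genuine orbits in the finite half-space $\{w>0\}$, i.e. by profiles defined in a right neighborhood of $\xi=0$, and not only along orbits confined to infinity. I would address this by a direct asymptotic matching at $\tilde\eta\to -\infty$: for arbitrary constants $C_w,\,C_z>0$ the ansatz $y(\tilde\eta)\sim 1/(m\tilde\eta)$, $w(\tilde\eta)\sim C_w e^{2\tilde\eta}$, $z(\tilde\eta)\sim C_z e^{(\sigma+2)\tilde\eta}$ is consistent with \eqref{systinf1} (the perturbation $-z+w-(\beta/\alpha)yw$ in $\dot y$ being exponentially small compared with $-my^2$), producing a family of orbits in $\{w>0\}$ that approach $Q_1$ along the saddle-node center-unstable direction and carry profiles with the logarithmic vertical asymptote \eqref{beh.Q12}.
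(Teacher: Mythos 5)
Your proof is correct, and your treatment of the center-manifold orbits is a genuinely different route from the paper's. The paper's sketch first establishes that along the non-$W^u$ orbits one has $Y\to-\infty$ and $Y/Z\to-\infty$ near $Q_1$, then goes back to the profile ODE \eqref{SSODE} and drops the three lower-order terms to reduce to $(f^m)''(\xi)+\xi^{-1}(f^m)'(\xi)\sim 0$, whose first integral $\xi(f^m)'=\mathrm{const}$ gives \eqref{beh.Q12}. You instead exploit that the $y$-axis $\{z=0,w=0\}$ is exactly invariant and carries the autonomous Riccati equation $\dot y=-my^2$, and then translate through the two elementary identities $y=Y/X=\xi f'(\xi)/f(\xi)$ and $d\tilde\eta=X\,d\eta=d\xi/\xi$, i.e. $\tilde\eta=\ln\xi+\mathrm{const}$, to obtain $\xi f'/f\sim 1/(m\ln\xi)$ and hence $f\sim K(-\ln\xi)^{1/m}$. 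Both derivations lead to the same answer (and in fact your $y\sim 1/(m\tilde\eta)$ together with $w\sim C_we^{2\tilde\eta}$, $z\sim C_ze^{(\sigma+2)\tilde\eta}$ is precisely why $Y=y/w\to-\infty$ and $Y/Z=y/z\to-\infty$ in the paper's intermediate step). Your route is arguably cleaner: the logarithm appears transparently from integrating the Riccati normal form of the saddle-node. The one point at the same level of informality as the paper's own sketch is the asymptotic matching showing that genuine orbits in $\{w>0,z>0\}$ inherit the center-manifold asymptotics; the paper handles this by deferring to \cite[Lemma~3.5]{IMS22}, and your consistency check plays the analogous role.
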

\begin{proof}[Sketch of the proof]
It is immediate to see that at $N=2$ the critical point $Q_1=Q_5$ is a saddle-node and $M(Q_1)$ has eigenvalues $\lambda_1=0$, $\lambda_2=\sigma+2$ and $\lambda_3=2$. The two-dimensional unstable manifold tangent to the vector space spanned by the eigenvectors $e_2=(-1,\sigma+2,0)$ and $e_3=(1,0,2)$ (corresponding to the eigenvalues $\lambda_2$ and $\lambda_3$) contains orbits whose analysis leads to the local behavior \eqref{beh.Q1} exactly as in Lemma \ref{lem.Q1}. All the other orbits go out tangent to the direction of the eigenvector $e_1=(1,0,0)$ corresponding to the zero eigenvalue, according to the theory in \cite[Section 3.4]{GH}. This implies that $|y/w|\to\infty$ and $|y/z|\to\infty$ on these orbits in a neighborhood of $Q_1$. With the aid of this limit behavior, one can show in a first step that along these orbits one has $Y\to-\infty$ and $Y/Z\to-\infty$ in a neighborhood of the point $Q_1$ and then use this information to prove that the last three terms in Eq. \eqref{SSODE} are negligible with respect to the terms involving $(f^m)''$ and $(f^m)'$. Hence, the local behavior as $\xi\to0$ of the profiles contained in these orbits is given by equating
$$
(f^m)''(\xi)+\frac{1}{\xi}(f^m)'(\xi)\sim 0,
$$
leading to \eqref{beh.Q12} after an integration. A detailed proof is given in \cite[Lemma 3.5]{IMS22}.
\end{proof}
As a remark, if we think of the dimension $N$ as a parameter in the system \eqref{systinf1}, we are dealing with a \emph{transcritical bifurcation} of this system at $N=2$ in the sense of \cite{S73, GH}.

\medskip

\noindent \textbf{Differences for $N=1$ with $-1<\sigma<0$}. Letting $N=1$, one can easily observe that $Q_1$ becomes an unstable node (as an effect of the previously analyzed bifurcation at $N=2$) while $Q_5$ passes to have a two-dimensional unstable manifold. The orbits going out of $Q_1$ contain profiles with $f(0)=K>0$ and with any possible value for $f'(0)\in\real$, while the orbits going out of $Q_5$ contain profiles with the local behavior
\begin{equation}\label{beh.Q5N1}
f(\xi)\sim K\xi^{1/m}, \qquad {\rm as} \ \xi\to0,
\end{equation}
with $K>0$ arbitrary constant. This is proved in detail in \cite[Section 6]{IMS21}, where an analysis of the more complicated situation that appears in dimension $N=1$ and with $\sigma\in(-2,-1]$ (introducing a new bifurcation and a new critical point) is also performed.

\medskip

\noindent \textbf{Local analysis near $Q_2$ and $Q_3$}. We translate these points into the finite part of a phase space following \cite[Theorem 5(b), Section 3.10]{Pe}. We thus set
$$
x=\frac{X}{Y}, \qquad z=\frac{Z}{Y}, \qquad w=\frac{1}{Y}
$$
and obtain the new system in variables $(x,z,w)$
\begin{equation}\label{systinf2}
\left\{\begin{array}{ll}\pm\dot{x}=-mx-(N-2)x^2-\frac{\beta}{\alpha}xw+x^2w-x^2z,\\
\pm\dot{z}=-pz-\frac{\beta}{\alpha}zw-(N+\sigma)xz-xz^2+xzw,\\
\pm\dot{w}=-w-\frac{\beta}{\alpha}w^2+xw^2-Nxw-xzw,\end{array}\right.
\end{equation}
where the signs have to be chosen according to the direction of the flow as follows: in a neighborhood of $Q_2$ one has to choose the minus sign in \eqref{systinf2}, while in a neighborhood of $Q_3$ one has to choose the plus sign, since $\dot{Y}$ is negative near both $Q_2$ and $Q_3$ but the direction of the flow is reversed. We thus identify $Q_2$ with the origin of \eqref{systinf2} when taken the minus sign and $Q_3$ with the origin of \eqref{systinf2} when taken the plus sign. We next deduce that $Q_2$ is an unstable node, while $Q_3$ is a stable node and the local behavior is given in the next lemma.
\begin{lemma}[Local analysis near $Q_2$ and $Q_3$]\label{lem.Q23}
The orbits going out of $Q_2$ to the finite part of the phase space contain profiles $f(\xi)$ which change sign at some $\xi_0\in(0,\infty)$ in the sense that $f(\xi_0)=0$, $(f^m)'(\xi_0)>0$. The orbits entering the point $Q_3$ from the finite part of the phase space contain profiles $f(\xi)$ which change sign at some $\xi_0\in(0,\infty)$ in the sense that $f(\xi_0)=0$, $(f^m)'(\xi_0)<0$.
\end{lemma}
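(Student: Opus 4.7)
The argument combines a routine linearization at $Q_2$ and $Q_3$ with a profile-level identity that forces $f$ to vanish at a finite point with a prescribed sign of $(f^m)'$.

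First, I would linearize the system \eqref{systinf2} at its origin. With the minus sign (appropriate near $Q_2$, where $\dot Y<0$ and $Y\to+\infty$) the Jacobian is $\mathrm{diag}(m,p,1)$, so $Q_2$ is an unstable node; with the plus sign (near $Q_3$) the Jacobian is $\mathrm{diag}(-m,-p,-1)$, so $Q_3$ is a stable node. In both cases the Hartman--Grobman theorem yields a three-parameter family of orbits leaving $Q_2$, respectively entering $Q_3$, from the interior of the finite part of the phase space.

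To translate this into information on the profiles, I would exploit the identity
\[
Y(\xi) \;=\; \frac{m}{\alpha}\,\xi^{-1}f(\xi)^{m-2}f'(\xi) \;=\; \frac{1}{\alpha}\,\xi^{-1}\,\frac{(f^m)'(\xi)}{f(\xi)},
\]
coming from $(f^m)'=m f^{m-1}f'$. On an orbit approaching $Q_2$ we have $Y\to+\infty$ together with $X/Y,\,Z/Y,\,1/Y\to 0$, and near $Q_3$ the same limits hold with $Y\to-\infty$. The conditions $X/Y\to 0$ and $Z/Y\to 0$ translate into $f/(\xi f')\to 0$ and $\xi^{\sigma+1}f^{p-m+1}/(mf')\to 0$, so neither $f$ nor $\xi^{\sigma}f^{p-1}$ blow up; together with the identity above this forces $(f^m)'/f\to\pm\infty$. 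The only way to realize this is for $f(\xi)\to 0$ at some point $\xi_0$ at which $(f^m)'(\xi_0)$ stays bounded and nonzero. Integrating near $\xi_0$ gives $f^m\sim (f^m)'(\xi_0)(\xi-\xi_0)$, hence $f\sim c(\xi-\xi_0)^{1/m}$ with $c^m=(f^m)'(\xi_0)$, which is a transversal zero-crossing of $f$; the sign of $(f^m)'(\xi_0)$ is read off directly from the sign of the limit of $Y$, giving the two cases in the statement.

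The main obstacle is ensuring $\xi_0\in(0,\infty)$, i.e.\ excluding the limit being attained as $\xi\to\infty$ or $\xi\to 0$. The former is ruled out by the same argument as in the proof of Lemma \ref{lem.P1}: from $\xi X'(\xi)=(m-1)X(\xi)Y(\xi)-2X(\xi)^2$ and \cite[Lemma 2.9]{IL13} applied to $X(\xi)$ one extracts a sequence $\xi_k\to\infty$ along which $Y(\xi_k)$ is bounded, contradicting $|Y|\to\infty$. The latter is excluded by the local classification carried out in Sections \ref{sec.local} and \ref{sec.infty}: any orbit reaching $\xi=0$ must do so through one of the critical points $P_0$, $P_2$, $Q_1$, $Q_5$ (or their $N=2$, $N=1$ analogues), and inspection of the local behaviors \eqref{beh.P0}, \eqref{beh.P2}, \eqref{beh.Q1}, \eqref{beh.Q5} and \eqref{beh.Q12} shows that none of them is compatible with $|Y|\to\infty$ together with $X/|Y|,\,Z/|Y|,\,1/|Y|\to 0$. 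Therefore $\xi_0$ is finite and positive, completing the proof.
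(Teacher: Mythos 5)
The paper itself does not give a proof of Lemma~\ref{lem.Q23}; it simply defers to \cite[Lemma~2.6]{IS21a}. Your outline (linearize the system \eqref{systinf2} to identify $Q_2$ as an unstable node and $Q_3$ as a stable node with diagonal Jacobians $\pm\,\mathrm{diag}(m,p,1)$; translate $|Y|\to\infty$ together with $X/Y,\ Z/Y,\ 1/Y\to 0$ into $f(\xi_0)=0$ with $(f^m)'(\xi_0)$ of a prescribed sign; and exclude $\xi_0\in\{0,\infty\}$) is exactly the expected strategy, and the linearization computation is correct.

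There is, however, a genuine error in the step that excludes $\xi_0=\infty$. You wrote $\xi X'(\xi)=(m-1)X(\xi)Y(\xi)-2X(\xi)^2$, but the right-hand side there is $\dot X = dX/d\eta$, not $\xi\,dX/d\xi$. The change of variable \eqref{ind.var} gives $d\eta/d\xi = 1/(\xi X)$, so in fact $\xi X'(\xi)=(m-1)Y(\xi)-2X(\xi)$, as stated in the proof of Lemma~\ref{lem.P1}. This is not a cosmetic slip: with the formula as you wrote it, the conclusion you invoke does not follow, since $\xi_k X'(\xi_k)\to 0$ together with $X(\xi_k)\to 0$ would only give $X(\xi_k)\bigl[(m-1)Y(\xi_k)-2X(\xi_k)\bigr]\to 0$, which is automatically satisfied and yields no bound on $Y(\xi_k)$. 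With the corrected identity the intended contradiction with $|Y|\to\infty$ does go through. Two further points deserve a word rather than an assertion: \cite[Lemma 2.9]{IL13} requires $X(\xi)$ to have a finite limit as $\xi\to\infty$, which you should justify (e.g.\ from the direction of approach to $Q_2$, $Q_3$ along the slow eigendirection $w$, one has $X=x/w\to 0$); and the claim that $(f^m)'$ stays bounded and nonzero at $\xi_0$ should be argued, since the alternative $(f^m)'(\xi_0)=0$ would correspond to the critical point $P_1$ (where $Y$ stays finite), not to $Q_2$ or $Q_3$.
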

We omit here the proof of this Lemma, since it follows the proof of the similar Lemma in previous works such as for example \cite[Lemma 2.6]{IS21a}.

\medskip

\noindent \textbf{No connections to or from $Q_4$ for $p>1$}. We are left with the analysis of the critical point $Q_4$. The general theory is not helpful for this study, since an intent to use \cite[Theorem 5(b), Section 3.10]{Pe} in order to translate this point to the origin of an equivalent phase space produces a critical point whose linearization has all the eigenvalues equal to zero and an analysis of it would be extremely involved. We thus prove that there are no interesting orbits connecting (in any way) to this point in a direct way, working with Eq. \eqref{SSODE}.
\begin{lemma}\label{lem.Q4}
Let $p>1$. Then there are no profiles $f(\xi)$ contained in any orbit either going out or entering the critical point $Q_4$.
\end{lemma}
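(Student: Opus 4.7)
The plan is to argue by contradiction. Suppose there is an orbit of the system \eqref{PSsyst1} containing a positive profile $f$ and either entering or leaving $Q_4$; this means that along the orbit one has $X\to0$, $Y\to0$, $Z\to+\infty$ and additionally $X/Z\to0$, $Y/Z\to0$, that is, the orbit reaches $Q_4$ on the Poincar\'e hypersphere. I first locate the value of $\xi$ along such an orbit. A direct computation using \eqref{PSsyst1} yields
$$\frac{d}{d\eta}\log\frac{Z^{m-1}}{X^{p-1}}=\left[\sigma(m-1)+2(p-1)\right]X=L\,X,$$
with $L>0$ in the range \eqref{range.exp}, so this quotient is strictly increasing along orbits in $\{X>0\}$. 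But from \eqref{PSchange} one checks that $Z^{m-1}/X^{p-1}$ equals a positive constant times $\xi^L$, hence the divergence of the quotient near $Q_4$ forces $\xi\to+\infty$ along the orbit.

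Next, I pass to the limit in the equation for $\dot Y$ in \eqref{PSsyst1}. Since $X\to0$, $Y\to0$, and $Y$ is bounded, the only way to have a consistent balance as the orbit approaches $Q_4$ is that $XZ\to0$, which in profile language means $\xi^{\sigma-2}f^{m+p-2}\to0$. I then inspect \eqref{SSODE} as $\xi\to\infty$: the condition $Z\to+\infty$ forces $\xi^\sigma f^p\gg\alpha f$, while the smallness of $X$, $Y$ and $XZ$ renders the diffusion terms $(f^m)''$ and $(N-1)(f^m)'/\xi$ negligible compared to the reaction term. The only remaining term that can compensate the reaction is the convective term $\beta\xi f'$, which therefore must satisfy $\beta\xi f'\sim -\xi^\sigma f^p$. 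Integrating this asymptotic relation one obtains the tail $f(\xi)\sim C\xi^{-\sigma/(p-1)}$ as $\xi\to\infty$, which is exactly the tail behavior of the orbits entering $P_{\gamma_0}$ identified in Lemma \ref{lem.Pg}; but along those orbits $Z\to\gamma_0<+\infty$, in open contradiction with $Z\to+\infty$, which closes the argument.

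The main obstacle I anticipate is the dominant-balance step above: showing rigorously that both $(f^m)''$ and $(N-1)(f^m)'/\xi$ are genuinely negligible in \eqref{SSODE} compared with $\xi^\sigma f^p$ requires a careful bookkeeping of rates whose interplay depends delicately on the sign of $\sigma$, and one must also rule out oscillations of $f'$ that could invalidate the asymptotic integration. A cleaner but more technical backup, which I would use if the direct profile analysis proves too brittle, is to work on the chart $(x,y,w)=(X/Z,Y/Z,1/Z)$ centered at $Q_4$: after reparametrizing time via $d\tau=d\eta/w$, the system takes the form
$$x'=x[(m-p)y-(\sigma+2)x],\qquad y'=-x-py^2+O(|(x,y,w)|^2),\qquad w'=-w[(p-1)y+\sigma x],$$
and the sign $-x-py^2\leq0$ in the admissible half-space $\{x\geq0\}$ forces $y$ to be nonincreasing along any orbit approaching the origin, which combined with the full degeneracy of the linearization at $Q_4$ precludes a genuine connection from the interior of $\{x>0\}$ and thus rules out profiles connecting to $Q_4$.
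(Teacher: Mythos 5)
Your opening observation is elegant and genuinely simpler than the paper's treatment of the corresponding sub-cases. The identity $Z^{m-1}/X^{p-1}=C\xi^L$ (the powers of $f$ cancel exactly, and $\frac{d}{d\eta}\log(Z^{m-1}/X^{p-1})=LX$ is a nice consistency check) immediately forces $\xi\to\infty$ along any orbit approaching $Q_4$, since there $Z\to\infty$ and $X\to0$. This disposes in one stroke of the paper's Case~1 ($\xi\to0$, which in the paper requires a further split on the sign of $\sigma$) and Case~2 ($\xi\to\xi_0$). That part is correct and arguably cleaner.

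The gap is in the main case $\xi\to\infty$, which is where nearly all of the paper's work goes. Your claim that ``the only way to have a consistent balance as the orbit approaches $Q_4$ is that $XZ\to0$'' is not justified: $Y\to0$ does not force $\dot Y\to0$, and even the weaker conclusion that $\dot Y$ cannot converge to a nonzero limit leaves open the possibility that $XZ$ oscillates. More seriously, the dominant-balance step $\beta\xi f'\sim-\xi^{\sigma}f^{p}$ silently assumes $f$ is eventually decreasing to zero. The paper must, and does, distinguish three possibilities for $L=\lim_{\xi\to\infty}f(\xi)$: $L\in(0,\infty)$, where $f'\to0$ and no such balance holds (the paper handles this via the calculus lemma from \cite{IL13}, extracting subsequences along which all differential terms vanish); $L=\infty$, where $f'>0$ so $\beta\xi f'$ has the wrong sign (the paper uses a concavity argument on $f^m$); and $L=0$. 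You only address, informally, the last of these, and even there the asymptotic integration would need rigorous control of the discarded terms. The backup argument on the chart $(x,y,w)=(X/Z,Y/Z,1/Z)$ does not repair this: $-py^2$ is itself of the same quadratic order as the error terms you absorb into $O(|(x,y,w)|^2)$, so it cannot be cleanly separated out, and in any case the conclusion ``$y$ nonincreasing plus degenerate linearization $\Rightarrow$ no connection'' is not a valid inference; degenerate critical points admit monotone connecting orbits. So the proposal rules out $\xi\to0$ and $\xi\to\xi_0$ by a genuinely neater route than the paper, but the heart of the proof, ruling out $\xi\to\infty$, is incomplete.
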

\begin{proof}
Assume for contradiction that there are orbits connecting to $Q_4$ and containing solutions to Eq. \eqref{SSODE}. On such an orbit, the component $Z$ is dominating, thus we have $Z\to\infty$, $X/Z\to0$ and $Y/Z\to0$ when approaching $Q_4$. These limits read in terms of profiles
\begin{equation}\label{interm10}
\xi^{\sigma}f(\xi)^{p-1}\to\infty, \qquad \xi^{-2-\sigma}f(\xi)^{m-p}\to0, \qquad \xi^{-1-\sigma}f(\xi)^{m-p-1}f'(\xi)\to0.
\end{equation}
The limits in \eqref{interm10} can be taken simultaneously either as $\xi\to0$, or as $\xi\to\infty$, or as a third option as $\xi\to\xi_0\in(0,\infty)$. We will rule out one by one all these possibilities below.

\medskip

\noindent \textbf{Case 1: $\xi\to0$}. If $\sigma\geq0$, we infer on the one hand from the first limit in \eqref{interm10} and the fact that $p>1$ that $f(\xi)\to\infty$. On the other hand, the second limit in \eqref{interm10} readily gives that $f(\xi)\to0$, since $m>p$, and we reach a contradiction. If $\sigma<0$, we again infer from the first two limits in \eqref{interm10} that there exists $\epsilon\in(0,1)$ sufficiently small such that
$$
\xi^{-\sigma/(p-1)}<f(\xi)<\xi^{(\sigma+2)/(m-p)}, \qquad {\rm for} \ \xi\in(0,\epsilon).
$$
This in particular implies that
$$
\xi^{-\sigma/(p-1)}<\xi^{(\sigma+2)/(m-p)}
$$
and, taking into account that $\xi\in(0,1)$, we further find that
$$
-\frac{\sigma}{p-1}<\frac{\sigma+2}{m-p},
$$
which leads to a contradiction with the fact that in our range \eqref{range.exp} we have $\sigma(m-1)+2(p-1)>0$.

\medskip

\noindent \textbf{Case 2: $\xi\to\xi_0\in(0,\infty)$}. This is very easy to be ruled out, since we infer immediately from the first two limits in \eqref{interm10} and the fact that $\xi\to\xi_0$ that on the one hand $f(\xi)^{p-1}\to\infty$ and on the other hand $f(\xi)^{m-p}\to0$, leading to a contradiction since both $p-1$ and $m-p$ are positive.

\medskip

\noindent \textbf{Case 3: $\xi\to\infty$}. This last case is more involved, and its discussion will be split into several steps for the reader's convenience.

\medskip

\noindent \textbf{Case 3, Step 1.} We prove first that there exists $R>0$ sufficiently large such that $f(\xi)$ is monotone on $(R,+\infty)$. Assume for contradiction that there exists a sequence of local minima $\xi_{0,n}$ of the profile $f(\xi)$ such that $\xi_{0,n}\to\infty$ as $n\to\infty$. We then have $f'(\xi_{0,n})=0$ and $(f^m)'(\xi_{0,n})=0$ for any $n$, and also
$$
(f^m)''(\xi_{0,n})=m(m-1)f(\xi_{0,n})^{m-2}f'(\xi_{0,n})^2+mf(\xi_{0,n})^{m-1}f''(\xi_{0,n})\geq0.
$$
By evaluating \eqref{SSODE} at $\xi=\xi_{0,n}$ we find
$$
f(\xi_{0,n})(\xi_{0,n}^{\sigma}f(\xi_{0,n})^{p-1}-\alpha)=-(f^m)''(\xi_{0,n})\leq0,
$$
which is a contradiction with the fact that
$$
\lim\limits_{n\to\infty}\xi_{0,n}^{\sigma}f(\xi_{0,n})^{p-1}=\infty.
$$
It thus follows that the profile $f(\xi)$ must be monotone on some interval $(R,+\infty)$ and thus have a limit as $\xi\to\infty$, which we denote by $L\in[0,\infty]$.

\medskip

\noindent \textbf{Case 3, Step 2}. Assume now for contradiction that $L=\lim\limits_{\xi\to\infty}f(\xi)\in(0,\infty)$. We apply twice a calculus result stated rigorously as \cite[Lemma 2.9]{IL13}, in a first step to the function
$$
h(\xi)=\left\{\begin{array}{ll}f(\xi)-L, & {\rm if} \ f \ {\rm is \ increasing},\\L-f(\xi), & {\rm if} \ f \ {\rm is \ decreasing},\end{array}\right.
$$
in order to deduce that there is a subsequence $\xi_{k}$ such that $\xi_{k}\to\infty$ and $\xi_kh'(\xi_k)\to0$ as $k\to\infty$, and then to the function $\xi h'(\xi)$, showing that there exists a subsequence (relabeled as $\xi_k$ for simplicity) such that at the same time we have
$$
\lim\limits_{k\to\infty}\xi_k=+\infty, \qquad \lim\limits_{k\to\infty}h(\xi_k)=\lim\limits_{k\to\infty}\xi_kh'(\xi_k)=\lim\limits_{k\to\infty}\xi_k^2h''(\xi_k)=0,
$$
which gives in terms of the initial function $f$ that
\begin{equation}\label{interm11}
\lim\limits_{k\to\infty}f(\xi_k)=L, \qquad \lim\limits_{k\to\infty}\xi_kf'(\xi_k)=\lim\limits_{k\to\infty}\xi_k^2f''(\xi_k)=0.
\end{equation}
In particular, we also infer from \eqref{interm11} that
$$
\lim\limits_{k\to\infty}(f^m)''(\xi_k)=\lim\limits_{k\to\infty}\frac{N-1}{\xi_k}(f^m)'(\xi_k)=0,
$$
whence, by evaluating \eqref{SSODE} at $\xi=\xi_k$, we are left with
$$
\lim\limits_{k\to\infty}f(\xi_k)[\xi_k^{\sigma}f(\xi_k)^{p-1}-\alpha]=0,
$$
which is a contradiction to the first limit in \eqref{interm10}.

\medskip

\noindent \textbf{Case 3, Step 3}. Assume now for contradiction that $\lim\limits_{\xi\to\infty}f(\xi)=\infty$. A previous step in the proof gives that $f$ is in this case increasing on some interval $(R,\infty)$, that is, $f'(\xi)>0$ and $(f^m)'(\xi)>0$ for any $\xi\in(R,\infty)$. We then infer from \eqref{SSODE} and the first limit in \eqref{interm10} that
$$
(f^m)''(\xi)\leq f(\xi)[\alpha-\xi^{\sigma}f(\xi)^{p-1}]\to-\infty \qquad {\rm as} \ \xi\to\infty,
$$
for any $\xi\in(R,\infty)$. There exists thus some $R_1>R$ such that $(f^m)''(\xi)<-1$ for any $\xi\in[R_1,\infty)$. The mean-value theorem then gives
$$
(f^m)'(\xi)<(f^m)'(R_1)-(\xi-R_1), \qquad {\rm for \ any} \ \xi>R_1,
$$
and by letting $\xi$ very large we reach a contradiction with the fact that $(f^m)'(\xi)>0$ for any $\xi\in(R_1,\infty)$.

\medskip

\noindent \textbf{Case 3, Step 4}. Assume now for contradiction that $\lim\limits_{\xi\to\infty}f(\xi)=0$, and from the previous steps we also obtain that $f$ is in this case decreasing on some interval $(R,\infty)$. If $\sigma\leq0$ we get that $\xi^{\sigma}f(\xi)^{p-1}\to0$ as $\xi\to\infty$, contradicting the first limit in \eqref{interm10}. Let now $\sigma>0$. We then write Eq. \eqref{SSODE} in the following form
\begin{equation}\label{interm12}
(f^m)''(\xi)+\frac{N-1}{\xi}(f^m)'(\xi)+\left[\frac{\xi^{\sigma}f(\xi)^{p-1}}{2}-\alpha\right]f(\xi)+\frac{\xi^{\sigma}f(\xi)^{p}}{2}+\beta\xi f'(\xi)=0.
\end{equation}
The first limit in \eqref{interm10} implies that
\begin{equation}\label{interm13}
\left[\frac{\xi^{\sigma}f(\xi)^{p-1}}{2}-\alpha\right]f(\xi)>0
\end{equation}
for $\xi$ sufficiently large, while the third limit in \eqref{interm10} gives that
\begin{equation}\label{interm14}
\begin{split}
\frac{N-1}{\xi}(f^m)'(\xi)&+\frac{\xi^{\sigma}f(\xi)^{p}}{2}+\beta\xi f'(\xi)\\&=\xi^{\sigma}f(\xi)^p\left[\frac{1}{4}+m(N-1)\xi^{-1-\sigma}f(\xi)^{m-p-1}f'(\xi)\right]\\
&+f(\xi)\left[\frac{1}{4}\xi^{\sigma}f(\xi)^{p-1}+\frac{\beta\xi f'(\xi)}{f(\xi)}\right]=f(\xi)(T_1+T_2),
\end{split}
\end{equation}
where
$$
T_1:=\xi^{\sigma}f(\xi)^{p-1}\left[\frac{1}{4}+m(N-1)\xi^{-1-\sigma}f(\xi)^{m-p-1}f'(\xi)\right]
$$
and
$$
T_2:=\frac{1}{4}\xi^{\sigma}f(\xi)^{p-1}+\frac{\beta\xi f'(\xi)}{f(\xi)}.
$$
We immediately infer from the first and third limits in \eqref{interm10} that $T_1(\xi)\to+\infty$ as $\xi\to\infty$, while a calculus exercise based on the first limit in \eqref{interm10} and the definition of the limit readily gives that $T_2$ is non-negative in a neighborhood of $+\infty$. We thus conclude from \eqref{interm12}, \eqref{interm13}, \eqref{interm14} and the previous considerations that $(f^m)''(\xi)<0$ in some interval $(R,\infty)$ for sufficiently large $R$, which is an obvious contradiction with the fact that the function $f^m$ has $y=0$ as a horizontal asymptote.
\end{proof}

\medskip

\noindent \textbf{The critical point $Q_4$ for $p=1$}. This is a special case which introduces important changes, in line with the analysis performed in dimension $N=1$ for the case $p=1$ in \cite[Lemma 2.10]{IS19}. We will generalize this analysis to the $N$-dimensional case below. Notice first that for $p=1$ we are restricted only to $\sigma>0$, according to \eqref{range.exp}.
\begin{lemma}\label{lem.Q4p1}
Let $p=1$ and $\sigma>0$. The critical point $Q_4$ behaves like a stable node for the orbits coming from the finite part of the phase space associated to the system \eqref{PSsyst1}. The orbits entering this critical point contain profiles presenting a tail at infinity with the decay rate
\begin{equation}\label{beh.Q4}
f(\xi)\sim K\xi^{(\sigma+2)/(m-1)}e^{-\xi^{\sigma}}, \qquad {\rm as} \ \xi\to\infty.
\end{equation}
\end{lemma}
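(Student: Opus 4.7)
The plan is to exploit that for $p=1$ the structure of the system \eqref{PSsyst1} simplifies drastically: by definition \eqref{PSchange} one has $Z(\xi)=\xi^{\sigma}/\alpha$ identically, so approaching $Q_4$ from the half-space $\{X>0\}$ is equivalent to $\xi\to\infty$, while the conditions $X/Z\to 0$ and $Y/Z\to 0$ translate into
\[
\xi^{-\sigma-2}f(\xi)^{m-1}\to 0,\qquad \xi^{-\sigma-1}f(\xi)^{m-2}f'(\xi)\to 0,
\]
so that the proof reduces to a direct asymptotic study of \eqref{SSODE} as $\xi\to\infty$.

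First, I would show that $f(\xi)\to 0$ as $\xi\to\infty$ on any such orbit, adapting Step~1 of the proof of Lemma \ref{lem.Q4} to $p=1$: at any local minimum $\xi_0$ of $f$ the evaluation of \eqref{SSODE} yields $(f^{m})''(\xi_0)=(\alpha-\xi_0^{\sigma})f(\xi_0)$, which becomes negative as soon as $\xi_0^{\sigma}>\alpha$, contradicting $(f^{m})''(\xi_0)\geq 0$. Thus $f$ is eventually monotone, the bound $f=o(\xi^{(\sigma+2)/(m-1)})$ inherited from $X/Z\to 0$ forces $f$ to be bounded, hence $f(\xi)\to L\in[0,\infty)$; a double application of \cite[Lemma~2.9]{IL13} then produces a sequence $\xi_k\to\infty$ along which $\xi_k f'(\xi_k)$ and $\xi_k^{2}f''(\xi_k)$ tend to zero, so that \eqref{SSODE} evaluated on $\xi_k$ gives $(\xi_k^{\sigma}-\alpha)L\to 0$, forcing $L=0$.

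To extract the precise rate \eqref{beh.Q4}, I would use the identities $\beta=1/\sigma$ and $\alpha\sigma=(\sigma+2)/(m-1)$ valid for $p=1$, and introduce
\[
\Psi(\xi):=\log f(\xi)+\xi^{\sigma}-a\log\xi,\qquad a=\frac{\sigma+2}{m-1}=\alpha\sigma.
\]
Dividing \eqref{SSODE} by $f$ and substituting $f'/f=\Psi'-\sigma\xi^{\sigma-1}+a/\xi$, a direct calculation shows that the constants $-\alpha$, $+\xi^{\sigma}$ and $a/\sigma$ cancel exactly thanks to the choice $a=\alpha\sigma$, leaving
\[
\Psi'(\xi)=-\frac{\sigma}{\xi f(\xi)}\left[(f^{m})''(\xi)+\frac{N-1}{\xi}(f^{m})'(\xi)\right].
\]
Once a preliminary exponential upper bound $f(\xi)\leq Ce^{-c\xi^{\sigma}}$ is obtained by bootstrap from $f\to 0$ using the leading balance $\beta\xi f'+\xi^{\sigma}f\approx 0$, the right-hand side of this identity is controlled by a multiple of $\xi^{(m-1)a+2\sigma-3}e^{-(m-1)\xi^{\sigma}}$ and is therefore integrable at $+\infty$. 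Hence $\Psi(\xi)\to\Psi_\infty\in\real$, yielding \eqref{beh.Q4} with $K=e^{\Psi_\infty}>0$, and the attractor property of $Q_4$ in $\{X>0\}$ follows because every orbit in a small neighborhood of $Q_4$ corresponds to a profile with $f\to 0$ whose asymptotic is uniquely determined up to $K$.

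The main obstacle is the bootstrap step, namely upgrading the qualitative information $f\to 0$ to the quantitative exponential upper bound needed to make the nonlinear terms $(f^{m})''$ and $(N-1)(f^{m})'/\xi$ genuinely subdominant in the formula for $\Psi'$. This is the technical analog of \cite[Lemma~2.10]{IS19}, here complicated by the $N$-dimensional curvature contribution $(N-1)(f^{m})'/\xi$, which must be controlled simultaneously with $(f^{m})''$ when inserting them into the identity for $\Psi'$.
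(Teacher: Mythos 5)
Your key algebraic observation is correct and genuinely different from the paper's route: with $p=1$ one has $\beta=1/\sigma$ and $\alpha\sigma=(\sigma+2)/(m-1)$, and dividing \eqref{SSODE} by $f$ after substituting $f'/f=\Psi'-\sigma\xi^{\sigma-1}+\alpha\sigma/\xi$ with $\Psi=\log f+\xi^{\sigma}-\alpha\sigma\log\xi$ does make the terms $-\alpha$, $+\xi^{\sigma}$ and $\alpha\sigma/\sigma$ cancel exactly, leaving the clean identity $\Psi'(\xi)=-\sigma\bigl[(f^m)''(\xi)+\tfrac{N-1}{\xi}(f^m)'(\xi)\bigr]/(\xi f(\xi))$. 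This is a purely ODE-level path to \eqref{beh.Q4} which does not use the change of variable $W=XZ$ that the paper employs. However, the part you flag as "the main obstacle" is not a side issue but is essentially the whole technical content: it is exactly the paper's estimate \eqref{interm24}, obtained by first proving that $(f^m)''\geq 0$ for large $\xi$, deducing the inequality $(\xi^\sigma-\alpha)f+\beta\xi f'+\tfrac{N-1}{\xi}(f^m)'\leq0$, dividing by $\beta\xi f$, and integrating. Without this you cannot show $\Psi'\in L^1$, and even with a pointwise exponential bound on $f$ you still need separate control of $(f^m)'$ and $(f^m)''$ (available via the convexity of $f^m$ and via the ODE itself, but these estimates have to be carried out).

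Two further points. The inference "$f=o(\xi^{(\sigma+2)/(m-1)})$ forces $f$ to be bounded" is incorrect: that little-$o$ bound is compatible with $f\to\infty$. You need to exclude $f\to\infty$ directly, for instance as in Case~3, Step~3 of the proof of Lemma~\ref{lem.Q4}: if $f$ increases, then $(f^m)''\leq f(\alpha-\xi^{\sigma})\to-\infty$, contradicting $(f^m)'>0$. Finally, your $\Psi$ computation proves the rate for orbits already known to converge to $Q_4$, but does not by itself establish that $Q_4$ behaves like a stable node, i.e.\ attracts a full open neighborhood in $\{X>0\}$. The paper gets this from the change of variable $W=XZ$, which brings $Q_4$ to the origin of the polynomial system \eqref{PSsyst1bis}, where a center-manifold analysis (as in \cite[Lemma~2.10]{IS19}, unaffected by the extra $NXY$ term) yields both the attracting node structure and the asymptotic simultaneously. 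So your route, if completed with the explicit differential-inequality bound, gives a nice alternative derivation of the rate \eqref{beh.Q4}, but the local phase-space argument still seems needed for the "stable node" claim.
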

\begin{proof}[Sketch of the proof]
Notice first that, since $p=1$, we have $Z=\xi^{\sigma}\to\infty$ on all the orbits connecting to $Q_4$, thus this point can be reached only by orbits entering it as $\xi\to\infty$. The proof of their local behavior follows very closely the proofs of \cite[Lemma 2.9 and Lemma 2.10]{IS19} to which we refer, and we only give here the differences with respect to the above mentioned proofs. In a first step, it is shown that if $f(\xi)$ is a solution to Eq. \eqref{SSODE} with $p=1$ and such that $f(\xi)>0$ for any $\xi>0$, then there exists $R>0$ and $K(R)>0$ such that
\begin{equation}\label{interm24}
f(\xi)\leq K(R)\xi^{(\sigma+2)/(m-1)}e^{-\xi^{\sigma}}, \qquad {\rm for \ any} \ \xi>R.
\end{equation}
In order to prove the estimate \eqref{interm24}, one can follow verbatim the proof of \cite[Lemma 2.9]{IS19} to show that necessarily a profile such that $f(\xi)>0$ for any $\xi>0$ has to decrease to zero in an interval $(R,\infty)$ for some $R>0$ large. Moreover, by an argument of contradiction with sequence of zeros also given in \cite[Lemma 2.9]{IS19}, one can also get that $(f^m)''(\xi)\geq0$ for any $\xi\in(R,\infty)$ (by taking $R$ larger in order to fulfill both conditions). We infer from \eqref{SSODE} that
\begin{equation}\label{interm25}
(\xi^{\sigma}-\alpha)f(\xi)+\beta\xi f'(\xi)+\frac{N-1}{\xi}(f^m)'(\xi)\leq0, \qquad \xi\in(R,\infty).
\end{equation}
Fixing $\epsilon>0$ sufficiently small, we can increase $R>0$ further in order to have
$$
-\frac{(N-1)m(f^{m-1})'(\xi)}{(m-1)\beta}<\epsilon, \qquad \xi>R,
$$
and the estimate \eqref{interm25} can (by dividing it by $\beta\xi f(\xi)$) be written in an equivalent form as
$$
\frac{f'(\xi)}{f(\xi)}\leq-\sigma\xi^{\sigma-1}+\frac{\sigma+2}{(m-1)\xi}+\frac{\epsilon}{\xi^2},
$$
which leads after an integration on $(\xi_0,\xi)$, $\xi_0>R$ fixed, to
$$
f(\xi)\leq K(\xi_0)\xi^{(\sigma+2)/(m-1)}e^{-\xi^{\sigma}}e^{\epsilon/\xi},
$$
for any $\xi>R$. The inequality \eqref{interm24} follows then by noticing that, for $\xi>R$ large enough, $e^{\epsilon/\xi}<2$ and thus doubling the constant $K(\xi_0)$. In a second step, we show that the local behavior \eqref{beh.Q4} is actually taken on orbits entering $Q_4$. To this end, we proceed as in \cite[Lemma 2.10]{IS19} by performing the change of variable $W=XZ$ to find a new dynamical system
\begin{equation}\label{PSsyst1bis}
\left\{\begin{array}{ll}\dot{X}=X[(m-1)Y-2X],\\
\dot{Y}=-Y^2-\frac{\beta}{\alpha}Y+X-NXY-W,\\
\dot{W}=W[(m-1)Y+(\sigma-2)X].\end{array}\right.
\end{equation}
Notice that with this change of variable the point $Q_4$ ``moves" at the origin. More precisely, we infer from the estimate \eqref{interm24} that any orbit entering $Q_4$ in the phase space associated to the system \eqref{PSsyst1} will now enter the origin of the system \eqref{PSsyst1bis} as
$$
W=XZ\sim\xi^{\sigma-2}f(\xi)^{m-1}\to0, \qquad {\rm as} \ \xi\to\infty.
$$
The local analysis of the flow of the system \eqref{PSsyst1bis} in a neighborhood of the origin is completely similar to the analysis performed in \cite[Lemma 2.10]{IS19} in dimension $N=1$, since a simple inspection of the proof shows that the term $NXY$ in the second equation of \eqref{PSsyst1bis} is completely irrelevant for the flow on the center manifold in a neighborhood of the origin. We thus refer the reader to the proof of \cite[Lemma 2.10]{IS19} for the rest of the argument.
\end{proof}

\section{Existence of good profiles with interface}\label{sec.exist}

This section is devoted to the proof of Theorem \ref{th.exist}, based on a shooting technique in backward sense from the interface point. Such a technique has been used also in the one-dimensional case with $\sigma>0$ \cite[Section 3]{IS21a}, but based on continuity arguments with respect to parameters directly for the solutions to Eq. \eqref{SSODE}. This approach is no longer valid here at least for $\sigma<0$ due to the singular coefficient at $\xi=0$, then we will perform all the shooting argument in the phase space. We are thus interested in monitoring where do the orbits entering the critical point $P_1$ come from.
\begin{proof}[Proof of Theorem \ref{th.exist}]
We divide the proof into several steps.

\medskip

\noindent \textbf{Step 1. The two-dimensional manifold.} As we know from Lemma \ref{lem.P1}, there exists a two-dimensional stable manifold of orbits entering $P_1$ generated by the eigenvalues corresponding to the first and third equation in the system \eqref{PSsyst1}. Since $P_1$ is a hyperbolic critical point, the Hartman-Grobman theorem implies that the orbits entering $P_1$ on the stable manifold are tangent to the directions of the system obtained by keeping only the linear terms. Thus, recalling that $Y=-\beta/\alpha$ at $P_1$, we can write
$$
\frac{dZ}{dX}\sim\frac{(p-1)YZ}{(m-1)XY}=\frac{p-1}{m-1}\frac{Z}{X},
$$
thus the orbits enter $P_1$ tangent to the curves obtained by integration, that is
\begin{equation}\label{interm15}
Z=KX^{(p-1)/(m-1)}, \qquad K\in(0,\infty),
\end{equation}
having thus two limits: one included in the invariant plane $\{Z=0\}$ corresponding to $K=0$ and one included in the invariant plane $\{X=0\}$ corresponding to the limit $K\to\infty$.

\medskip

\noindent \textbf{Step 2. Limit in the plane $\{Z=0\}$}. When restricted to the invariant plane $\{Z=0\}$, the system \eqref{PSsyst1} reduces to
\begin{equation}\label{PSsyst1Z0}
\left\{\begin{array}{ll}\dot{X}=X[(m-1)Y-2X],\\
\dot{Y}=-Y^2-\frac{\beta}{\alpha}Y+X-NXY,\end{array}\right.
\end{equation}
and it is easy to see that the critical point $P_1=(0,-\beta/\alpha)$ is a saddle point, thus there exists a unique orbit entering this point in the phase plane associated to the system \eqref{PSsyst1Z0}. We show that this orbit remains always negative and in fact it stays in the half-plane $\{Y\leq-\beta/\alpha\}$. To this end, we first notice that the flow of the system \eqref{PSsyst1Z0} on the line $\{Y=-\beta/\alpha\}$ is given by the sign of the expression
$$
X\left(1+\frac{N\beta}{\alpha}\right)>0,
$$
thus this line can be only crossed from left to right (in the positive direction with respect to $Y$) by orbits of the system. But we know that the orbit entering $P_1$ arrives tangent to the eigenvector corresponding to the negative eigenvalue $\lambda_1=-(m-1)\beta/\alpha$ of the critical point $P_1$, which is
$$
e_1=(m\beta,-(\alpha+N\beta)),
$$
hence it enters the point from the half-plane $\{Y<-\beta/\alpha\}$ and thus it lies forever in this half-plane. Coming back to the global analysis of the space, it follows that this orbit comes either from the unstable node $Q_5$ (for $N\geq3$) or from the node-sector of the saddle-node $Q_1$ (for $N=2$).

\medskip

\noindent \textbf{Step 3. Limit in the plane $\{X=0\}$}. Let us introduce the change of variable $\overline{Z}=XZ$ in the system \eqref{PSsyst1}, before restricting ourselves to the plane $\{X=0\}$, similarly to the analysis in \cite[Proposition 3.4]{IS21a}. Letting then $X=0$ in the newly obtained system, we are left with the following reduced system
\begin{equation}\label{PSsyst1X0}
\left\{\begin{array}{ll}
\dot{Y}=-Y^2-\frac{\beta}{\alpha}Y-\overline{Z},\\
\dot{\overline{Z}}=(m+p-2)Y\overline{Z},\end{array}\right.
\end{equation}
which is (modulo a rescaling of the coefficients) exactly the same system as in Step 1 of \cite[Proposition 3.4]{IS21a}, as it does not depend on the dimension $N$. The analysis of this system has been performed in the proof of the above quoted result, to which we refer for details. Its outcome is that there exists a unique orbit of the system \eqref{PSsyst1X0} entering the saddle point $P_1$, and this unique orbit comes from the critical point $Q_2$.

\medskip

\noindent \textbf{Step 4. The three-sets argument}. Recalling that all the orbits entering $P_1$ on the stable manifold are tangent to the one-parameter family of curves in \eqref{interm15}, we define the following three sets
\begin{equation*}
\begin{split}
&\mathcal{A}=\{K\in(0,\infty): {\rm the \ orbit \ with \ parameter} \ K \ {\rm in \ \eqref{interm15} \ comes \ from} \ Q_5\},\\
&\mathcal{C}=\{K\in(0,\infty): {\rm the \ orbit \ with \ parameter} \ K \ {\rm in \ \eqref{interm15} \ comes \ from} \ Q_2\},\\
&\mathcal{B}=\{K\in(0,\infty): {\rm the \ orbit \ with \ parameter} \ K \ {\rm in \ \eqref{interm15} \ does \ neither \ come \ from} \ Q_5 \ {\rm nor} \ Q_2\},
\end{split}
\end{equation*}
with the obvious adaptation of the node-sector of $Q_1$ instead of $Q_5$ if $N=2$. Since both $Q_5$ and $Q_2$ are unstable nodes, the sets $\mathcal{A}$ and $\mathcal{C}$ are both open. The orbit entering $P_1$ and contained in the invariant plane $\{Z=0\}$ comes from $Q_5$ which is an unstable node, thus we infer from standard continuity arguments that $\mathcal{A}$ is non-empty and contains an interval of the form $(0,K_*)$ for some $K_*>0$. A similar argument using the orbit included in the plane $\{X=0\}$ coming from $Q_2$ proves that $\mathcal{C}$ is also non-empty and contains an interval of the form $(K^*,\infty)$ for some $K^*>K_*>0$. We deduce that the set $\mathcal{B}$ is non-empty (and closed) by standard topology. Thus, there exists at least a parameter $K\in\mathcal{B}$. The orbit tangent to the curves in \eqref{interm15} with parameters $K\in\mathcal{B}$ cannot come from either $Q_5$ or $Q_2$, thus they should go out from one of the remaining critical points $Q_1$, $P_2$ or $P_0$, or from an $\alpha$-limit set.

\medskip

\noindent \textbf{Step 5. End of the proof}. We are left with ruling out the possibility of an $\alpha$-limit set as the origin of the orbits entering $P_1$ with parameters $K\in\mathcal{B}$. To this end, we derive from \cite[Theorem 1, Section 3.2]{Pe} that any $\alpha$-limit set has to be a compact set in the phase space. We show first that a profile $f(\xi)$ solution to Eq \eqref{SSODE} and contained in an orbit starting from an $\alpha$-limit set may only have damped oscillations. Assume for contradiction that this is not true. We can thus extract convergent subsequences of minima, respectively maxima, of $f(\xi)$
$$
\xi_{n,\min}\to\xi_{\min}, \qquad \xi_{n,\max}\to\xi_{\max}\in[0,\infty), \qquad {\rm as} \ n\to\infty,
$$
such that their terms are alternated (that is, a maximum point lies between two minima and viceversa) and $\xi_{\min}\neq\xi_{\max}$. We then deduce that there exist points
$$
\xi_n\in(\xi_{n,\max},\xi_{n,\min}), \qquad (f^m)''(\xi_n)=0, \qquad \xi_n\to\xi_0,
$$
where the latter is obtained by eventually restricting ourselves to a subsequence. Evaluating Eq. \eqref{SSODE} at $\xi=\xi_n$ we get
\begin{equation}\label{interm16}
\lim\limits_{n\to\infty}\left[\frac{m(N-1)}{\xi_n}f(\xi_n)^{m-1}+\beta\xi_n\right]f'(\xi_n)=\alpha f(\xi_0)-\xi_0^{\sigma}f(\xi_0)^p\in\real.
\end{equation}
Since $\xi_{\min}\neq\xi_{\max}$, we readily obtain from the mean-value theorem that $f'(\xi_n)\to\pm\infty$ and $(f^m)'(\xi_n)\to\pm\infty$ as $n\to\infty$ (the sign depending on whether the function oscillates from minima to maxima or viceversa when passing through $\xi_n$) and this leads to a contradiction with the limit in \eqref{interm16}. We find that the oscillations that a profile $f(\xi)$ may present are either finite or damped. 

Let us now go back to the phase space associated to the system \eqref{PSsyst1} and assume for contradiction that there exists an orbit entering $P_1$ with parameter $K\in\mathcal{B}$ and starting from an $\alpha$-limit set which is not reduced to a critical point. It is easy to show that, if both $X$ and $Z$ have a limit as $\eta\to-\infty$ along this orbit and only the $Y$ coordinate oscillates in a compact interval $[a,b]$, then the extremal points with $Y=a$ and $Y=b$ (together with the limits of $X$ and $Z$) are critical points. A detailed argument, based on subsequences of maxima and minima of $Y$ on the trajectory converging to $a$ and $b$, can be found in \cite[Proposition 4.10]{ILS22}. But this is impossible, as there are no different critical points in the system, with different finite values of $Y$ and unstable local behavior, hence the $\alpha$-limit is reduced to a single point. We are now left with the following two possibilities:

$\bullet$ There exist $X_{\min}<X_{\max}\in[0,\infty)$ such that 
$$
\liminf\limits_{\eta\to-\infty}X(\eta)=X_{\min}, \qquad \limsup\limits_{\eta\to-\infty}X(\eta)=X_{\max}.
$$
Expressing the above in terms of profiles and going back to the independent variable $\xi$, it follows that 
\begin{equation}\label{interm16bis}
\frac{\alpha}{m}X_{\min}\xi^{2}\leq f(\xi)^{m-1}\leq \frac{\alpha}{m}X_{\max}\xi^2,
\end{equation}
and the extremal values in \eqref{interm16bis} are taken along subsequences. We then conclude from the fact that oscillations must be damped in terms of $f(\xi)$ that the $\alpha$-limit is taken as $\xi\to0$. This fact, together with the definition of $Z$ in \eqref{PSchange} and the range of $\sigma$ in \eqref{range.exp}, readily implies that $Z\to0$ along the orbit going out of the $\alpha$-limit. The invariance of the plane $\{Z=0\}$ then entails that the compact $\alpha$-limit set lies in the plane $\{Z=0\}$, and we further infer from the Poincar\'e-Bendixon's Theorem \cite[Theorem 1, Section 3.7]{Pe} that the $\alpha$-limit set must be either a closed union of orbits between finite critical points (which is obviously impossible) or a periodic orbit inside the invariant plane $\{Z=0\}$. We prove next that this is not possible by using the Dulac's Criteria \cite[Theorem 2, Section 3.9]{Pe}. Indeed, if we restrict ourselves to the invariant plane $\{Z=0\}$, where the system \eqref{PSsyst1} reduces to \eqref{PSsyst1Z0} and we set $\theta=(3-m)/(m-1)$, we find by direct calculation that the divergence of the vector field of the system \eqref{PSsyst1Z0} multiplied by $X^{\theta}$, namely
\begin{equation*}
\begin{split}
{\rm div}X^{\theta}(\dot{X},\dot{Y})=-\frac{\beta}{\alpha}X^{\theta}-\left[N+\frac{2(m+1)}{m-1}\right]X^{\theta+1}
\end{split}
\end{equation*}
is strictly negative on all the half-plane $\{X>0, Z=0\}$, thus no periodic orbits may exist in this region.

$\bullet$ There exist $Z_{\min}<Z_{\max}\in[0,\infty)$ such that
$$
\liminf\limits_{\eta\to-\infty}Z(\eta)=Z_{\min}, \qquad \limsup\limits_{\eta\to-\infty}Z(\eta)=Z_{\max}.
$$
Expressing the above in terms of profiles and going back to the independent variable $\xi$, it follows that
\begin{equation}\label{interm16bisbis}
\alpha Z_{\min}\xi^{-\sigma}\leq f(\xi)^{p-1}\leq \alpha Z_{\max}\xi^{-\sigma},
\end{equation}
and the extremal values in \eqref{interm16bisbis} are taken along subsequences. We then conclude from the fact that oscillations must be damped in terms of $f(\xi)$ that the $\alpha$-limit is taken as $\xi\to0$ (and it may only happen when $\sigma<0$). This fact, together with the definition of $X$ in \eqref{PSchange} and the range of $\sigma$ in \eqref{range.exp}, readily implies that $X\to\infty$ along the orbit going out of the $\alpha$-limit. We can thus translate ourselves to the system \eqref{systinf1} which maps the limit $X\to\infty$ onto $w\to0$ and argument as above to conclude from the Poincar\'e-Bendixon's Theorem that there should be either a countable union of closed orbits between finitely many critical points or a periodic orbit inside the invariant plane $\{w=0\}$ of the system \eqref{systinf1}. But the system \eqref{systinf1} reduces in the plane $\{w=0\}$ to
\begin{equation}\label{systinf1bisbis}
\left\{\begin{array}{ll}\dot{y}=-(N-2)y-z-my^2,\\
\dot{z}=(\sigma+2)z+(p-m)yz,\end{array}\right.
\end{equation}
which has no critical points in the half-plane $\{z>0\}$ of it. Since, according to \cite[Theorem 5, Section 3.7]{Pe}, any periodic orbit must contain a critical point in the interior region of it, both a closed union of orbits between critical points or a periodic orbit cannot exist and thus the $\alpha$-limit is reduced to a point, as desired.
\end{proof}

\section{Classification of the profiles: $\sigma$ small}\label{sec.small}

In this section we prove the first part of Theorem \ref{th.class}, the rest of its proof being completed in the next Section \ref{sec.large}. We will thus show in this section that there exists $\sigma_0>0$ such that for any $\sigma\in(-2,\sigma_0)$, the orbits going out of both $P_2$ and $P_0$ cannot reach the critical point $P_1$ (and in fact they have to enter the critical point $P_{\gamma}$). This proof as a whole is probably the most technical one in the present work, thus, before entering into precise details, we will explain here its strategy for a better understanding. We will change our look during this proof by fixing $\sigma$ and moving $p$, more precisely letting
$$
-2<\sigma\leq0, \qquad 1-\frac{\sigma(m-1)}{2}<p<m.
$$
The main difficulty, as we see in Figure \ref{fig1}, is that for $p$ closer to 1, the orbits going out of $P_2$ enter the critical point $P_{\gamma}$ in a monotone way, while as $p$ approaches $m$, oscillations of them start to occur.

\begin{figure}[ht!]
  \begin{center}
  \subfigure[$p \ closer \ to \ 1$]{\includegraphics[width=7.5cm,height=6cm]{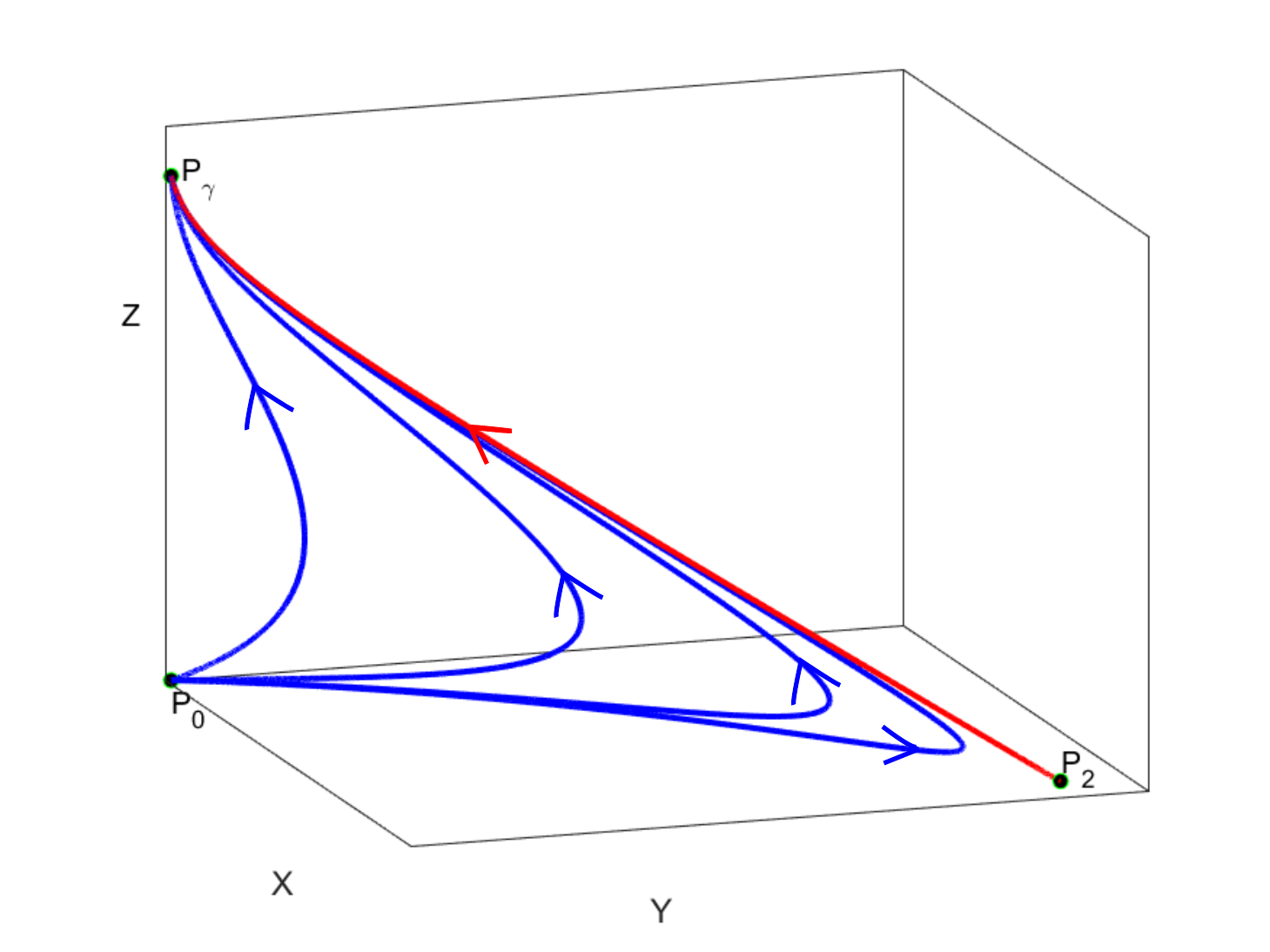}}
  \subfigure[$p \ closer \ to \ m$]{\includegraphics[width=7.5cm,height=6cm]{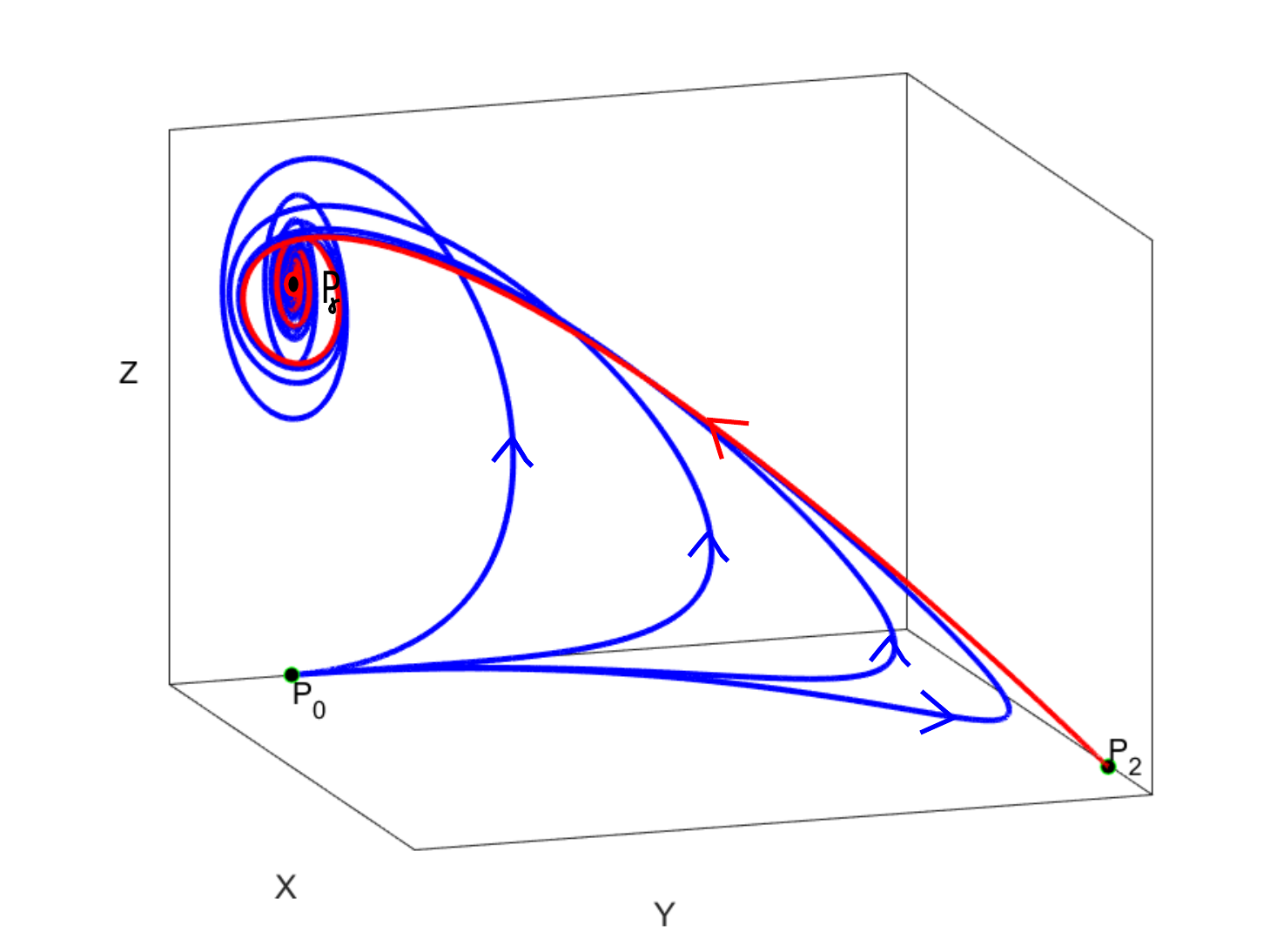}}
  \end{center}
  \caption{Orbits from $P_2$ going to $P_{\gamma}$ for $p$ is close to 1 and $p$ close to $m$. Experiments for $m=3$, $N=3$, $\sigma=0.2$ and $p=2$, respectively $p=2.99$.}\label{fig1}
\end{figure}

Owing to this fact, we are unable to cover the whole interval of $p$ with a single geometrical construction in the phase space, thus our strategy in the proof is \emph{tending a bridge} from $p\sim1$ to $p\sim m$. More precisely, when $p$ is small, we will employ as barrier for the flow in the phase space associated to the system \eqref{PSsyst1} a similar construction to the one that proved successful when dealing with $p\leq 1$ in \cite{IS19, IMS22}, while for $p$ closer to $m$, we will use a different and much more involved construction, limiting the orbits by means of a special surface that was very successful exactly in the limit case $p=m$, see \cite{IS22}. The joining point of the ``bridge" built with the two estimates from both sides lies at the following exponent
\begin{equation}\label{branch}
p_c(\sigma):=\frac{mN+\sigma+2}{N+\sigma+2}.
\end{equation}
It is now the moment to go to the detailed proof, which starts with a general, preparatory lemma.
\begin{lemma}\label{lem.walls}
At any point different from $P_2$ lying on any of the orbits going out of $P_2$ and of $P_0$ in the phase space associated to the dynamical system \eqref{PSsyst1}, it holds true that $X<X(P_2)$ and $Y<Y(P_2)$.
\end{lemma}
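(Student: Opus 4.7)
The plan is to show that the open region
\[
R = \{(X,Y,Z)\in\real^3 : 0 < X < X(P_2),\ Y < Y(P_2),\ Z > 0\}
\]
is positively invariant for the system \eqref{PSsyst1}, and that every orbit going out of $P_2$ or of $P_0$ enters $R$ immediately after leaving its starting critical point. The candidate escape faces are the two walls $\mathcal{W}_1 = \{X = X(P_2)\}$ and $\mathcal{W}_2 = \{Y = Y(P_2)\}$, while the remaining boundary pieces $\{X = 0\}$ and $\{Z = 0\}$ are already invariant by inspection of \eqref{PSsyst1}.

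The key computation on $\mathcal{W}_1$ is immediate: the identity $2X(P_2) = (m-1)Y(P_2)$ coming from \eqref{comp.P2} reduces the first equation of \eqref{PSsyst1} to $\dot X = X(P_2)(m-1)[Y - Y(P_2)]$, which is strictly negative as long as $Y < Y(P_2)$. For $\mathcal{W}_2$, I would use that the second equation of \eqref{PSsyst1} vanishes at $P_2$ in order to eliminate the $Y$-only terms, obtaining
\[
\dot Y\big|_{Y=Y(P_2)} = [1 - NY(P_2)][X - X(P_2)] - XZ,
\]
and the inequality \eqref{ineq1} yields $1 - NY(P_2) > 0$. Hence $\dot Y \leq 0$ throughout $\mathcal{W}_2 \cap \overline{R}$, with equality only at $P_2$ itself. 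Both walls therefore act as one-sided barriers preventing escape from $R$.

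Next I verify the initial conditions. The unique orbit going out of $P_2$ is tangent to the eigenvector $e_3$ whose components are listed in \eqref{vector.P2}: its $X$- and $Y$-components are strictly negative and its $Z$-component is strictly positive, so the orbit enters $R$ in a small neighborhood of $P_2$. For orbits going out of $P_0$, Lemma \ref{lem.P0} provides the expansion $U = h(X,Z) = O(|(X,Z)|^2)$ for the center manifold, where $U = (\beta/\alpha)Y - X$; thus $Y \sim (\alpha/\beta)X > 0$ for $X$ small positive, and the reduced flow \eqref{interm4} ensures that $X$ and $Z$ grow positively out of zero along the center manifold. These orbits therefore also enter $R$ near $P_0$.

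The only delicate point, and the main obstacle in the argument, is the edge $\mathcal{E} = \mathcal{W}_1 \cap \mathcal{W}_2 = \{X = X(P_2),\ Y = Y(P_2),\ Z \geq 0\}$, because on $\mathcal{W}_1$ the first-order computation only delivers $\dot X \leq 0$ with equality all along $\mathcal{E}$. To close this gap I intend to differentiate once more along the flow: starting from $\dot X = X[(m-1)Y-2X]$ and using $\dot X = 0$ together with $(m-1)Y(P_2) - 2X(P_2) = 0$ on $\mathcal{E}$, one obtains $\ddot X = X(P_2)(m-1)\dot Y = -(m-1)X(P_2)^2 Z$, which is strictly negative whenever $Z > 0$. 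Consequently an orbit contained in $R$ cannot make first contact with the boundary at a point of $\mathcal{E}\setminus\{P_2\}$, for then $X$ would attain a strict local maximum equal to $X(P_2)$, contradicting $X < X(P_2)$ in $R$. Once this second-order edge argument is in place, the invariance of $R$ combined with the initial-condition check yields the strict inequalities $X < X(P_2)$ and $Y < Y(P_2)$ along the entire orbits issuing from $P_2$ and $P_0$.
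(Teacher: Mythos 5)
Your overall strategy matches the paper's: take the region $\{X < X(P_2),\ Y < Y(P_2)\}$ (intersected with the invariant half-spaces $\{X>0\}$, $\{Z>0\}$), show the flow points inward on the two coordinate walls through $P_2$ using the critical-point relations $2X(P_2)=(m-1)Y(P_2)$ and $-Y(P_2)^2-\tfrac{\beta}{\alpha}Y(P_2)+X(P_2)-NX(P_2)Y(P_2)=0$, and then verify that the orbit out of $P_2$ (via the eigenvector $e_3$ in \eqref{vector.P2}) and the orbits out of $P_0$ (via the center manifold \eqref{center.P0}) enter this region immediately. All of your computations are correct, and the inequality \eqref{ineq1} is invoked exactly where the paper uses it.

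There is, however, a logical slip in your handling of the edge $\mathcal{E}$. The second-order computation $\ddot X = -(m-1)X(P_2)^2 Z < 0$ at a point of $\mathcal{E}\setminus\{P_2\}$ is right and shows that $X$ has a strict local maximum there; but a strict local maximum of $X$ at the boundary does \emph{not} contradict $X<X(P_2)$ on the prior part of the orbit — on the contrary, it is entirely compatible with it, so no contradiction is obtained from this alone. The correct and much simpler closure is already contained in the $\dot Y$ formula you derived on $\{Y=Y(P_2)\}$: at any edge point with $Z>0$ one has $\dot Y\big|_{\mathcal E} = -X(P_2)Z < 0$, whereas a first-contact time $t_0$ with $Y(t)<Y(P_2)$ for $t<t_0$ forces the one-sided derivative $\dot Y(t_0)\ge 0$ — an immediate contradiction. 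Contact at $P_2$ itself ($Z=0$) is excluded since the orbit only approaches $P_2$ as $\eta\to-\infty$. Replacing the spurious ``local maximum'' contradiction by this one-line observation removes the $\ddot X$ detour and aligns your argument fully with the paper's proof.
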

\begin{proof}
Let us consider the region $\mathcal{R}_1=\{X<X(P_2),Y<Y(P_2)\}$ in the phase space associated to the system \eqref{PSsyst1}. On the one hand, the direction of the flow of the system across the plane $\{X=X(P_2)\}$ is given by the sign of the expression
$$
X(P_2)[(m-1)Y-2X(P_2)]<0,
$$
provided $Y<Y(P_2)$. On the other hand, the direction of the flow of the system across the plane $\{Y=Y(P_2)\}$ is given by the sign of the expression
\begin{equation*}
F(X,Z)=-Y(P_2)^2-\frac{\beta}{\alpha}Y(P_2)+X-NXY(P_2)-XZ<X[1-NY(P_2)]-Y(P_2)\left[Y(P_2)+\frac{\beta}{\alpha}\right]<0,
\end{equation*}
provided $X<X(P_2)$. This shows that an orbit entering the region $\mathcal{R}_1$ cannot go out of it later on. Since the unique orbit going out of $P_2$ follows the direction of the eigenvector $e_3$ with components given by \eqref{vector.P2}, it goes into the interior of the region $\mathcal{R}_1$ and will stay there forever. The same happens in a more obvious way to all the orbits going out of $P_0$.
\end{proof}
The next step in the proof is the first part of the ``bridge", that is, to prove that for $p\in(1,p_c(\sigma))$ the orbits starting from $P_2$ and $P_{0}$ cannot reach $P_1$.
\begin{lemma}\label{lem.rooftop}
Let $N\geq2$. There exists $\sigma_{s}>0$ such that for any $\sigma\in(-2,\sigma_{s})$ and any $p\in(1-\sigma(m-1)/2,p_c(\sigma)]$, the orbits going out of $P_0$ and of $P_2$ cannot enter the critical point $P_1$.
\end{lemma}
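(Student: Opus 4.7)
The plan is to build an invariant region in the phase space of the system \eqref{PSsyst1} that contains the orbits going out of $P_0$ and $P_2$ but excludes the critical point $P_1=(0,-\beta/\alpha,0)$. By Lemma \ref{lem.walls}, these orbits are already trapped in the half-space $\{X<X(P_2),\,Y<Y(P_2)\}$; since $P_{\gamma_0}$ has $Y$-coordinate $0$ whereas $P_1$ has $Y=-\beta/\alpha<0$, the remaining task is to prevent $Y$ from ever decreasing to $-\beta/\alpha$ along these trajectories.

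Following the strategy successfully employed for the case $p\le 1$ in \cite{IS19, IMS22}, I would seek a barrier surface described as the zero set of a function $g(X,Y,Z)=Y-h(X,Z)$, where $h$ is a piecewise affine function of $X$ and $Z$, the two affine pieces meeting along a ``ridge'' and producing the rooftop shape suggested by the label of the lemma. The coefficients of $h$ depend explicitly on $m$, $p$, $\sigma$, $N$ and are to be tuned so that the following three requirements hold simultaneously: (i) $P_1$ lies in the half-space $\{g<0\}$, while the eigenvector $e_3$ of \eqref{vector.P2} along which the orbit leaves $P_2$, and the leading-order direction on the center manifold at $P_0$ obtained in Lemma \ref{lem.P0}, both point into $\{g>0\}$; (ii) on each of the two affine pieces of the surface, the normal component $\nabla g\cdot(\dot X,\dot Y,\dot Z)$ has a sign pointing into the region $\{g>0\}$ throughout the box $\{0\le X<X(P_2),\,0\le Z\}$; (iii) on the ridge, the two one-sided normal inequalities are compatible.

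Once (i)--(iii) are established, a standard topological argument closes the construction by adjoining the invariant planes $\{X=0\}$ and $\{Z=0\}$ together with the bounding walls $\{X=X(P_2)\}$ and $\{Y=Y(P_2)\}$ from Lemma \ref{lem.walls}. This yields a closed invariant set that excludes $P_1$ but traps the orbits of interest. Combining this with the classification of possible $\omega$-limits coming from Lemma \ref{lem.Pg} and the local analyses at infinity of Section \ref{sec.infty} forces these orbits to converge to $P_{\gamma_0}$, thereby ruling out $P_1$.

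The main technical obstacle is precisely the sign analysis in (ii). When $g=0$, the quantity $\nabla g\cdot(\dot X,\dot Y,\dot Z)$ becomes a polynomial in $X$ and $Z$ whose coefficients carry the full dependence on $m$, $p$, $\sigma$, $N$, and the borderline $p=p_c(\sigma)$ defined in \eqref{branch} should emerge exactly as the threshold at which this polynomial loses its desired sign somewhere in the box; beyond it, no affine piece can be made to work and the alternative construction of Section \ref{sec.large} must take over. The smallness bound $\sigma<\sigma_s$ arises because several of the coefficients degrade monotonically with $\sigma$; in particular, since $\gamma_0=1/(\alpha(p-1))>1$ precisely when $\sigma>0$, the flow on the flat plane $\{Y=0\}$ changes direction across $Z=1$, which is exactly why a single plane cannot serve as a barrier and the rooftop must bend downward past its ridge in order to accommodate the region $\{Z>1\}$.
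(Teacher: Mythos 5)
Your overall strategy---trap the orbits emanating from $P_0$ and $P_2$ in an invariant region that excludes $P_1$ by preventing $Y$ from decreasing to $-\beta/\alpha$, and deduce the threshold $p_c(\sigma)$ and the smallness bound $\sigma_s$ from the sign analysis of the flow across the barrier---is exactly the right high-level idea, and your observation that $\gamma_0>1$ precisely when $\sigma>0$ (so that the flow on $\{Y=0\}$, whose direction is $X(1-Z)$, can cross downward near $Z=1$ once $\sigma$ turns positive) is indeed the structural reason why the argument is delicate for $\sigma>0$. However, committing to a barrier of the form $Y=h(X,Z)$ with $h$ piecewise affine is where your proposal breaks down. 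The term $-XZ$ in $\dot Y$ grows linearly in $Z$ at fixed $X$, so any affine "roof piece" bending downward in $Z$ will eventually be defeated by this term unless one simultaneously controls the product $X(Z-1)$ along the orbit; but $X$ is a free coordinate of the barrier in your ansatz, so a piecewise-affine graph over $(X,Z)$ cannot supply this control. The paper resolves the difficulty with three barriers of fundamentally different shapes, two of which do \emph{not} fit the form $Y=h(X,Z)$: (1) in $\{Y\geq 0\}$ the affine plane $Y+Z/(N+\sigma)=1/N$, which \emph{does} fit your ansatz and is the source of the threshold $p_c(\sigma)$ via the root comparison $Z_1\leq Z_2$; (2) in $\{Y<0\}$ the \emph{hyperbolic cylinder} $X(Z-1)=\beta^2/(4\alpha^2)$, which is a constraint on $X$ and $Z$ only, entirely independent of $Y$, and is the crucial mechanism that bounds $X(Z-1)$; and (3) the vertical plane $\{Y=-\beta/(2\alpha)\}$, which is where the flow is shown to point upward (using the bound from (2)), thereby keeping $Y>-\beta/(2\alpha)>-\beta/\alpha$ forever. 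No ridge appears; the three pieces act in successive phases of the orbit's evolution, not as a single roof.

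Two further points worth flagging. First, Step 1 of the paper is not merely a barrier computation: for $\sigma\leq 0$ it is a genuine contradiction argument. The plane from (1) forces any crossing of $\{Y=0\}$ to occur at $Z<(N+\sigma)/N\leq 1$, while the flow on $\{Y=0\}$ (direction $X(1-Z)$) only allows downward crossings at $Z>1$; so in fact for $\sigma\leq 0$ the orbits never leave $\{Y\geq 0\}$ at all and pieces (2)--(3) are not even needed. This short-circuit for $\sigma\leq 0$ is invisible in your framework. Second, your final paragraph reaches for convergence to $P_{\gamma_0}$; this is stronger than what the lemma asserts and, as the paper's own remarks acknowledge, ruling out limit cycles for $\sigma\in(-2,\sigma_0)$ is left open---so you should not appeal to it. The statement only asks you to exclude $P_1$, and the paper does this by the geometric containment in $\{Y>-\beta/(2\alpha)\}$, which is self-contained and does not require identifying the $\omega$-limit.
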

\begin{proof}
The proof is divided into several steps for easiness. Notice that, in what follows, the fact that $N+\sigma>0$ is fundamental, thus we have to let $N\geq2$.

\medskip

\noindent \textbf{Step 1. Analysis for $\sigma\in(-2,0]$}. We already know from Lemma \ref{lem.walls} that $X<X(P_2)$ and $Y<Y(P_2)$ on these orbits. We next consider the plane
\begin{equation}\label{plane.roof}
Y+\frac{1}{N+\sigma}Z=\frac{1}{N}.
\end{equation}
in the phase space associated to the system \eqref{PSsyst1}. The direction of the flow of the system across this plane is given by the sign of the expression
\begin{equation}\label{interm17}
G(Z)=-\frac{p}{(N+\sigma)^2}Z^2+\frac{N(m-p)+(\sigma+2)(p+2)}{N(N+\sigma)(\sigma+2)}Z-\frac{N(m-p)+\sigma+2}{\sigma+2},
\end{equation}
which is a parabola with negative dominating coefficient and having the following roots
$$
Z_1=\frac{N+\sigma}{N}, \qquad Z_2=\frac{(N+\sigma)[N(m-p)+\sigma+2]}{Np(\sigma+2)}.
$$
We readily notice that, on the one hand, the root $Z_1$ corresponds in the plane \eqref{plane.roof} exactly to $Y=0$, and on the other hand, that $Z_1\leq Z_2$ provided $1<p\leq p_c(\sigma)$, where $p_c(\sigma)$ has been defined in \eqref{branch}. It follows that $G(Z)\leq0$ for $Y\geq0$ and $1<p\leq p_c(\sigma)$ and thus the flow of the system \eqref{PSsyst1} through the plane \eqref{plane.roof} has negative direction. Taking into account that $Y(P_2)<1/N$ by \eqref{ineq1}, it follows that the orbit going out of $P_2$ (and the same happens with all the orbits going out of $P_0$, as they enter first the half-space $\{Y>0\}$ as proved in Lemma \ref{lem.P0}) starts in the half-space $\{Y+Z/(N+\sigma)<1/N\}$ and it will remain there at least until it crosses the plane $\{Y=0\}$ as it cannot cross the plane \eqref{plane.roof} from below. In particular, this implies that if this orbit crosses the plane $\{Y=0\}$, it must do it at a point with coordinate $Z<(N+\sigma)/N$. We notice that, for $\sigma\in(-2,0]$, $(N+\sigma)/N\leq1$, and since the plane $\{Y=0\}$ can be crossed only in the region $\{Z>1\}$ according to the direction of the flow on it (given by the sign of $X(1-Z)$), it follows that all the orbits going out of $P_0$ and $P_2$ \emph{will remain forever in the half-space $\{Y\geq0\}$} if $\sigma\leq0$.

\medskip

\noindent \textbf{Step 2. $\sigma>0$ small}. The next step in the proof is to consider $\sigma>0$ but sufficiently small and work in the region $\{Y<0\}$. Consider the surface of equation
\begin{equation}\label{interm19}
X(Z-1)=k_1^2, \qquad k_1=\frac{\beta}{2\alpha}.
\end{equation}
The normal vector to this surface is $\overline{n}=(Z-1,0,X)$ and the direction of the flow of the system \eqref{PSsyst1} on the surface \eqref{interm19} is given by the sign of
\begin{equation*}
H(X,Y)=(\dot{X},\dot{Y},\dot{Z})\cdot\overline{n}=\left[\sigma X+\frac{(\sigma-2)\beta^2}{4\alpha^2}\right]X+\left[(p-1)X+\frac{(m+p-2)\beta^2}{4\alpha^2}\right]Y.
\end{equation*}
From now on, we will always compute the flow across planes or surfaces as the scalar product between the vector field of the system and the normal vector. Since we work now in the region $\{Y\leq0\}$, it is easy to see that the second term in the expression of $H(X,Y)$ is always negative, while the first term in the expression of $H(X,Y)$ is also negative provided $X<X(P_2)$ and
\begin{equation}\label{interm19bis}
0<\sigma<\frac{2\beta^2}{4\alpha^2X(P_2)+\beta^2},
\end{equation}
an inequality that is compatible on some interval $\sigma\in(0,\sigma_{s})$ despite the fact that the right-hand side also depends on $\sigma$ but has a positive minimum on some interval $(0,\sigma_s)$ with $\sigma_s>0$ sufficiently small. Thus, for such interval of $\sigma$, the flow on the surface \eqref{interm19} has negative direction. Moreover, since we have shown in the previous step that any orbit going out of $P_0$ or $P_2$ has to cross the plane $\{Y=0\}$ at a height $Z\leq(N+\sigma)/N$, that is $Z-1\leq\sigma/N$, it follows that if we let $\sigma>0$ sufficiently small such that it satisfies the estimate \eqref{interm19bis}, we infer that also
$$
0<\sigma<\frac{N\beta^2}{4\alpha^2X(P_2)}
$$
and thus $X(Z-1)<k_1^2$, which implies that the orbit we are analyzing will enter the region $\{Y<0\}$ (and thus remain forever) below the surface \eqref{interm19} due to its negative direction of the flow.

\medskip

\noindent \textbf{Step 3. Barrier by vertical plane}. Finally, considering the plane $Y=-k_1$, the direction of the flow on this plane is given by the sign of the expression
$$
L(X,Z)=-X(Z-1)+\frac{\beta^2}{4\alpha^2}+\frac{N\beta}{2\alpha}X>0,
$$
in the region lying below the surface \eqref{interm19}, where our orbits lie for $\sigma$ as in \eqref{interm19bis}. This gives that the orbits going out of $P_2$ and $P_0$ cannot cross the plane $Y=-\beta/2\alpha$ and thus cannot reach the critical point $P_1$.
\end{proof}
The main problem comes with noticing that, for $p>p_c(\sigma)$, the plane \eqref{plane.roof} is no longer a good barrier for the flow, as then $Z_2<Z_1$ and orbits can escape in the interval $(Z_2,Z_1)$ while still in the half-space $\{Y>0\}$. This is why, we need a different construction, which comes from a surface that was very successful in the limiting case $p=m$. Let us perform first the change of variable $W=XZ$ in \eqref{PSsyst1} to obtain the following system we will work with in the rest of this section
\begin{equation}\label{PSsyst2}
\left\{\begin{array}{ll}\dot{X}=X[(m-1)Y-2X],\\
\dot{Y}=-Y^2-\frac{\beta}{\alpha}Y+X-NXY-W,\\
\dot{W}=W[(m+p-2)Y+(\sigma-2)X].\end{array}\right.
\end{equation}
We define in these variables the following surface, similar to the one that has been used as a barrier for the flow in the case $p=m$ in \cite{IS22}:
\begin{equation}\label{surface}
W=\left(-N-\frac{\sigma}{2}+1\right)XY-mY^2-\frac{(2N+\sigma-2)(\sigma+2)}{8m}X^2+\frac{2m}{m+1}X
\end{equation}
with normal vector
$$
\overline{n}(X,Y,W)=\left(\frac{\partial W}{\partial X},\frac{\partial W}{\partial Y},-1\right),
$$
where the partial derivatives correspond to the right hand side of the expression \eqref{surface}. The reader can see a picture of this surface in Figure \ref{fig2}. The main technical step in the proof of Theorem \ref{th.class}, part 1, is the following
\begin{lemma}\label{lem.surface}
Let $N\geq2$. Then for any $-2<\sigma\leq0$ and for any $p\in(p_c(\sigma),m)$, the direction of the flow of the system \eqref{PSsyst2} through the portion of the surface \eqref{surface} with $X<X(P_2)$ is in the decreasing direction with respect to the $W$ coordinate.
\end{lemma}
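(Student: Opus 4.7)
Denote by $\Phi(X,Y)$ the right-hand side of \eqref{surface}, so the surface in question is the zero set of $G(X,Y,W):=W-\Phi(X,Y)$. Since the normal $\overline n=(\Phi_X,\Phi_Y,-1)$ chosen in the text points in the direction of decreasing $W$, the conclusion of the lemma is equivalent to proving that
\begin{equation*}
\mathcal{F}(X,Y):=\Phi_X(X,Y)\,\dot X+\Phi_Y(X,Y)\,\dot Y-\dot W\;>\;0
\end{equation*}
at every point of the surface with $0<X<X(P_2)$, after substituting $W=\Phi(X,Y)$ in the right-hand sides of \eqref{PSsyst2}.

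The plan is to carry out this scalar product explicitly and rewrite $\mathcal{F}(X,Y)$ as a polynomial in $(X,Y)$ whose positivity can be checked coefficient by coefficient. Computing the partial derivatives
\begin{equation*}
\Phi_X=\bigl(1-N-\tfrac{\sigma}{2}\bigr)Y-\tfrac{(2N+\sigma-2)(\sigma+2)}{4m}X+\tfrac{2m}{m+1},\qquad \Phi_Y=\bigl(1-N-\tfrac{\sigma}{2}\bigr)X-2mY,
\end{equation*}
and noticing that \eqref{PSsyst2} is purely quadratic while $\Phi$ has no constant term, one finds that $\mathcal{F}$ is a polynomial of degree three in $(X,Y)$ with no constant and no purely linear contribution. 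Using the identity $1-N-\sigma/2=-(2N+\sigma-2)/2$ and grouping terms, a direct (if lengthy) expansion gives the cubic part in the compact form
\begin{equation*}
m(p-m)Y^3+\tfrac{(2N+\sigma-2)(p-m)}{2}XY^2+\tfrac{(2N+\sigma-2)(\sigma+2)(p-m)}{8m}X^2Y+c_{30}X^3,
\end{equation*}
and the quadratic part in the form $\tfrac{2m\beta}{\alpha}Y^2+c_{11}XY+c_{20}X^2$, where $c_{11}=\tfrac{2m(m-p)}{m+1}+\tfrac{(2N+\sigma-2)\beta}{2\alpha}$ is manifestly positive, and $c_{20}=\tfrac{2(m-1)(N-1)-\sigma(3m+1)}{2(m+1)}$ is positive as well under the hypothesis $\sigma\leq0$ (with $N\geq2$, $m>1$).

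The key observation is that the $X$-free part of $\mathcal{F}$ factors as $mY^2\bigl[\tfrac{2\beta}{\alpha}-(m-p)Y\bigr]$, which is strictly positive for every $Y<\tfrac{2\beta}{\alpha(m-p)}$; a short verification, exploiting $\sigma\leq0$, shows that the bound $Y<Y(P_2)$ provided by Lemma \ref{lem.walls} implies this inequality, so this part of $\mathcal{F}$ contributes a strictly positive surplus whenever $Y\neq0$. For $Y\geq0$ the cubic cross terms are non-positive and will be absorbed into $c_{11}XY$ and the positive $X$-free surplus by exploiting the upper bound $X<X(P_2)$; for $Y<0$ the term $c_{21}X^2Y$ flips sign and becomes a positive contribution, which compensates the negative contribution of $c_{11}XY$ and $c_{12}XY^2$. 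The role of the hypothesis $p>p_c(\sigma)$ is to keep $c_{30}X^3$ under control and to ensure that the loss in the $XY$ and $X^2Y$ pieces stays below the gain in the $X$-free part and in $c_{20}X^2$.

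The main technical obstacle will be the exact bookkeeping of cancellations, in particular verifying that the coefficient $c_{30}$ (whose sign depends on the size of $2N-\sigma-6$ and therefore on $N$) does not spoil positivity for $X$ close to $X(P_2)$. To make the algebra manageable I plan to factor out the recurring quantities $(2N+\sigma-2)$ and $(\sigma+2)$ throughout, split the surface into the regions $Y\geq0$ and $Y<0$, and inside each region bound $\mathcal{F}$ from below by a manifestly positive expression of the form $X\cdot Q_1(X,Y)+Y^2R(X,Y)$ with $Q_1$ and $R$ controlled by the positive coefficients $c_{02}$, $c_{11}$ and $c_{20}$ already identified.
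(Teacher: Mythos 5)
Your setup is sound and your computation of the flow $\mathcal{F}(X,Y)$ agrees with the paper's equation \eqref{flow} (your coefficients $c_{02}=\tfrac{2m\beta}{\alpha}=\tfrac{2m(m-p)}{\sigma+2}$, $c_{11}$, $c_{20}$ and the cubic coefficients all check out), and the two preliminary observations you make are correct: the $X$-free part does factor as $mY^2[\tfrac{2\beta}{\alpha}-(m-p)Y]$ and one does have $Y(P_2)<\tfrac{2\beta}{\alpha(m-p)}$ for $\sigma\leq0$, $N\geq2$. But at that point your argument stops being a proof and becomes an announcement of intent. Everything hinging on the words ``will be absorbed,'' ``compensates,'' and ``keeps under control'' is precisely the content that has to be verified, and it is by far the hardest part: in the paper this occupies ten separate steps, with different groupings of fractional shares of the $X^2$ and $Y^2$ terms ($1/2$, $1/3$, $2/3$, $9/10$, $1/10$ in various combinations) chosen according to whether $m\leq3$ or $m\geq3$ and to the specific dimension ($N=2$, $N=3,4$, and $N\geq5$ each require a distinct balance), together with explicit sign analyses of several degree-two and degree-three polynomials in $\sigma$ at the endpoints $\sigma=-2$ and $\sigma=0$.

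There is also a concrete inaccuracy in your understanding of where the hypothesis $p>p_c(\sigma)$ is needed. You attribute its role to ``keeping $c_{30}X^3$ under control,'' but the $X^3$ term is handled by comparison with a fraction of the $X^2$ term alone and does not require the lower bound on $p$ at all. The restriction $p>p_c(\sigma)$ enters in a different place: when balancing the quadratic block $c_{02}Y^2+c_{11}XY+c_{20}X^2$ by completing the square, the leftover coefficient on $X^2$ is a linear function of $p$ which is positive at $p=m$ but \emph{fails to be positive at $p=1$}; the lower endpoint $p=p_c(\sigma)$ (justified because Lemma \ref{lem.rooftop} already covers $p\leq p_c(\sigma)$ by a completely different barrier) is exactly what rescues the sign. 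If you carry out your plan without realizing this, you will find a genuine obstruction in the $(XY,X^2,Y^2)$ compensation, not in the $X^3$ term. So: same approach, same starting computation, but the actual proof is missing, and the one heuristic you offer about the role of $p_c(\sigma)$ points in the wrong direction.
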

Some of the calculations in the following proof of Lemma \ref{lem.surface} have been performed with the aid of a symbolic computing software.
\begin{proof}
Somehow tedious but direct calculations give that the direction of the flow of the system \eqref{PSsyst2} over the surface \eqref{surface} is given by the following seven-term expression
\begin{equation}\label{flow}
\begin{split}
F(X,Y)&=-m(m-p)Y^3-\frac{1}{2}(m-p)(2N+\sigma-2)XY^2+\frac{2m(m-p)}{\sigma+2}Y^2\\
&-\frac{(m-p)(\sigma+2)(2N+\sigma-2)}{8m}X^2Y+\frac{(m-p)(2mN+2N+5m\sigma+6m+\sigma-2)}{2(\sigma+2)(m+1)}XY\\
&-\frac{(2N+\sigma-2)(\sigma+2)(2N-\sigma-6)}{16m}X^3+\frac{2(N-1)(m-1)-(3m+1)\sigma}{2(m+1)}X^2.
\end{split}
\end{equation}
The rest of the proof will consist of a careful compensation between terms in \eqref{flow} in order to show that $F(X,Y)>0$ for any $(X,Y)$ such that $0\leq X<X(P_2)$ and $Y<Y(P_2)$, of course with $\sigma$ and $p$ as in the statement of Lemma \ref{lem.surface}. The analysis will be also split on the regions $\{Y>0\}$ and $\{Y<0\}$ of the surface we consider, the latter being more involved. We divide it into a number of technical steps for the easiness of the reading.

\medskip

\noindent \textbf{Step 1. Terms in $X^2$ and $X^3$, taking a half of the term in $X^2$}. We deal with the last two terms in \eqref{flow}. In fact, optimizing a bit, we will show that it is enough to take only a half of the term in $X^2$. By dividing by $X^2$, the sign of their combination is the same as the sign of the expression
$$
E_1(X)=-\frac{(2N+\sigma-2)(\sigma+2)(2N-\sigma-6)}{16m}X+\frac{2(N-1)(m-1)-(3m+1)\sigma}{4(m+1)}.
$$
Notice that for $\sigma\in(-2,0]$ the free term in $E_1(X)$ (which is the same as $E_1(0)$) is positive, and it is then sufficient to show that $E_1(X(P_2))>0$ to complete the proof, since $E_1(X)$ is a linear function. We then have
$$
E_1(X(P_2))=-\frac{L_1(m,p,N,\sigma)}{32m(m+1)(mN-N+2)},
$$
where $L_1(m,p,N,\sigma)$ is an expression that can be written as a third degree polynomial with respect to $\sigma$ as follows
\begin{equation*}
\begin{split}
L_1&(m,p,N,\sigma)=-(m+1)(m-1)^2\sigma^3-2(m-1)(m+1)(2m+p-3)\sigma^2\\
&+\big[4(m+1)(m-1)^2N^2+8(m-1)(m^2+m+2)N+12m^3-8m^2p+44m^2+4m+8p+4\big]\sigma\\
&-8(m-1)(N-1)[((m+1)(m-p)+(m-1)^2)N+3mp+m+3(p-1)].
\end{split}
\end{equation*}
We notice that for $\sigma\in(-2,0)$, all the summands are negative except for the first one. But it is very easy to see that we obtain a negative result by coupling the first two terms in $L_1(m,p,N,\sigma)$, namely
$$
-(m+1)(m-1)\sigma^2[(m-1)(\sigma+2)+2(m+p-2)]<0.
$$
We then infer that $E_1(X(P_2))>0$, as desired.

\medskip

\noindent \textbf{Step 2. Region $\{Y>0\}$, terms in $X^2Y$ and $XY$}. The idea is the same as in the previous step, noticing that we can divide by $XY>0$ and the sign of this combination of terms in \eqref{flow} is the same as the sign of the following expression
$$
E_2(X)=-\frac{(m-p)(\sigma+2)(2N+\sigma-2)}{8m}X+\frac{(m-p)(2mN+2N+5m\sigma+6m+\sigma-2)}{2(\sigma+2)(m+1)}
$$
This is again a linear function in $X$ with $E_2(0)>0$ in an obvious way, thus it suffices to prove that also $E(X(P_2))\geq0$. We have
$$
E(X(P_2))=\frac{(m-p)L_2(m,p,N,\sigma)}{16m(mN-N+2)(m+1)(\sigma+2)},
$$
where $L_2(m,p,N,\sigma)$ is linear with respect to $p$. It is then enough to show that it is positive for $p=m$ and $p=1$. In the former case, we have
\begin{equation*}
\begin{split}
L_2&(m,m,N,\sigma)=-(m+1)(m-1)^2\sigma^3-(2(m+1)(m-1)^2N+2(m+1)(m-1)^2)\sigma^2\\
&+(8(m-1)(4m^2+m+1)N+4m^3+76m^2+12m+4)\sigma\\&+16m(m-1)(m+1)N^2+(40m^3-24m^2+56m-8)N+8m^3+88m^2-40m+8,
\end{split}
\end{equation*}
which is a cubic polynomial with respect to $\sigma$. We readily find that at $\sigma=0$ it gives
$$
L_2(m,m,N,0)=16m(m-1)(m+1)N^2+(40m^3-24m^2+56m-8)N+8m^3+88m^2-40m+8>0,
$$
while its evaluation at $\sigma=-2$ gives
$$
L_2(m,m,N,-2)=16m(m+1)(N-2)(Nm-N+2)\geq0,
$$
for $N\geq2$. Moreover, its second derivative with respect to $\sigma$ has the following expression
$$
-2(m+1)(m-1)^2[2(N+\sigma)+2+\sigma]<0, \qquad {\rm for} \ \sigma\in(-2,0),
$$
thus by basic calculus arguments we infer that $L_2(m,m,N,\sigma)\geq0$ for any $\sigma\in(-2,0]$. A similar analysis holds true for $p=1$, where
\begin{equation*}
\begin{split}
L_2(m,1,N,\sigma)&=-(m+1)(m-1)^2\sigma^3-2(m+1)(m-1)^2N\sigma^2\\&+(4(m-1)(9m^2+2m+1)N+4m^3+76m^2+12m+4)\sigma\\&+16m(m-1)(m+1)N^2+16m(3m^2-2m+3)N+32m(3m-1).
\end{split}
\end{equation*}
We then have
$$
L_2(m,1,N,0)=16m(m-1)(m+1)N^2+16m(3m^2-2m+3)N+32m(3m-1)>0,
$$
while
$$
L_2(m,1,N,-2)=16m(m+1)(N-2)(Nm-N+2)\geq0, \qquad {\rm for} \ N\geq2.
$$
Moreover, the second derivative of the polynomial $L_2(m,1,N,\sigma)$ with respect to $\sigma$ is
$$
-2(m+1)(m-1)^2(3\sigma+2N)
$$
which is obviously negative for $N\geq3$ in our range of $\sigma$. Finally, for the case $N=2$ we consider the first derivative with respect to $\sigma$ of $L_2(m,1,2,\sigma)$, which is a second degree polynomial in $\sigma$ with dominating negative coefficient, and observe that
$$
\frac{\partial L_2}{\partial\sigma}\Big|_{\sigma=-2}=16m^2(5m+1)>0, \ \frac{\partial L_2}{\partial\sigma}\Big|_{\sigma=0}=76m^3+20m^2+4m-4>0,
$$
whence $L_2(m,1,2,\sigma)$ is increasing for $\sigma\in(-2,0]$ for $N=2$, which leads to its positivity.

\medskip

\noindent \textbf{Step 3. Region $\{Y>0\}$, terms in $Y^3$, $Y^2$ and $XY^2$}. We are left with the sign of the combination of the first three terms in \eqref{flow}, which after dividing by $Y^2$, is the same as the sign of the expression
$$
E_3(X,Y)=-\frac{m-p}{2(\sigma+2)}\left[(2N+\sigma-2)(\sigma+2)X+2m(\sigma+2)Y-4m\right]
$$
and since the term in brackets is linearly increasing in both $X$ and $Y$, it is sufficient to show that $E_3(X(P_2),Y(P_2))>0$. To this end, we find that
$$
E_3(X(P_2),Y(P_2))=-\frac{m-p}{2(\sigma+2)}\frac{[2(m-1)^2\sigma-4(m-1)(2m-p+1)]N+L_3(m,p,\sigma)}{2(mN-N+2)},
$$
where
$$
L_3(m,p,\sigma)=(m-1)^2\sigma^2+2(m-1)(m+p)\sigma+4mp-20m+4p-4.
$$
Notice that the numerator of $E_3(X(P_2),Y(P_2))$ is a linearly decreasing expression of $N$, thus it suffices to prove that this numerator is negative for $N=1$. And this is achieved by standard calculus tools, noticing that the expression giving the sign can be written, for $N=1$, as
$$
P(\sigma)=(m-1)^2\sigma^2+2(m-1)(2m-1+p)\sigma-8m(m-p+2),
$$
which is a second degree polynomial with dominating positive coefficient and such that
$$
P(0)=-8m(m-p+2)<0, \qquad P(-2)=-4(3m-p)(m+1)<0,
$$
thus $P(\sigma)<0$ for any $\sigma\in(-2,0)$. It then follows that $E_3(X(P_2),Y(P_2))>0$ and thus $E_3(X,Y)>0$ in the region we are interested in. The analysis of the case $Y>0$ is completed, since the remaining half of the term in $X^2$ in \eqref{flow} is obviously positive for $\sigma\in(-2,0]$.

\medskip

\noindent \textbf{Step 4. Compensation of the term in $X^3$ in \eqref{flow} by one third of the term in $X^2$, for $m\geq3$}. This is a technical improvement over Step 1 above (where we were taking a half of the term in $X^2$) needed in order to deal with the region of the surface \eqref{surface} lying in the half-space $\{Y<0\}$, but only works for $m\geq3$. We proceed as in Step 1, and deduce that the sign of this combination of terms is given by the following expression
$$
\tilde{E}_1(X)=-\frac{(2N+\sigma-2)(\sigma+2)(2N-\sigma-6)}{16m}X+\frac{2(N-1)(m-1)-(3m+1)\sigma}{6(m+1)},
$$
and once more we wish to show that $\tilde{E}_1(X(P_2))>0$. We then have
$$
\tilde{E}_1(X(P_2))=-\frac{\tilde{L}_1(m,p,N,\sigma)}{96m(mN-N+2)(m+1)},
$$
where $\tilde{L}_1(m,p,N,\sigma)$ can be written as a third degree polynomial in $\sigma$ as follows:
\begin{equation*}
\begin{split}
\tilde{L}_1&(m,p,N,\sigma)=-3(m+1)(m-1)^2\sigma^3-6(m-1)(m+1)(2m+p-3)\sigma^2\\
&+\big[12(m+1)(m-1)^2N^2+16(m+3)(m-1)N+36m^3-24m^2p+84m^2-4m+24p+12\big]\sigma\\
&-8(m-1)(N-1)[(4m^2-(3p+1)m-3p+3)N+(9p-1)m+9(p-1)].
\end{split}
\end{equation*}
We easily observe that, for $\sigma=0$, we are left with the free term in the expression of $\tilde{L}_1$ and this is a linear function of $p$. Letting $p=m$, respectively $p=1$, we get
$$
\frac{\tilde{L}_1(m,m,N,0)}{8(m-1)(N-1)}=-(m-1)(m-3)N-9m^2-8m+9, \qquad \frac{\tilde{L}_1(m,1,N,0)}{8(m-1)(N-1)}=-4m(m-1)N-8m
$$
and both are negative provided that $m\geq3$. By linearity, we infer that $\tilde{L}_1(m,p,N,0)<0$. On the other hand, similar calculations for $\sigma=-2$, respectively $p=m$ and $p=1$ give
$$
\tilde{L}_1(m,m,N,-2)=-32m(Nm-N+2m+2)(Nm-N+2)<0
$$
and
$$
\tilde{L}_1(m,1,N,-2)=-8(7m+3)(m-1)^2N^2+32(m-1)(m^2-4m-3)N-32(m+1)(3m^2-2m+3)
$$
which is straightforward to see that it is negative for $N>1$. We again infer by linearity that $\tilde{L}_1(m,p,N,-2)<0$. The final argument is to see that $\tilde{L}_1(m,p,N,\sigma)$ is monotone increasing with respect to $\sigma$ for $\sigma\in(-2,0)$. Indeed, its derivative with respect to $\sigma$
\begin{equation*}
\begin{split}
\frac{\partial \tilde{L}_1}{\partial\sigma}&=-9(m+1)(m-1)^2\sigma^2-12(m-1)(m+1)(2m+p-3)\sigma\\
&+12(m+1)(m-1)^2N^2+16(m+3)(m-1)N+36m^3-24m^2p+84m^2-4m+24p+12
\end{split}
\end{equation*}
is a second degree polynomial in $\sigma$ with negative dominating coefficient and such that it is positive at $\sigma=0$ and $\sigma=-2$. The former is obvious by examining the free term, while the latter follows from noticing that
$$
\frac{\partial \tilde{L}_1}{\partial\sigma}(m,p,N,-2)=12(m+1)(m-1)^2N^2+16(m+3)(m-1)N+48m^3+48m^2-16m+48
$$
which is obviously positive.

\medskip

\noindent \textbf{Step 5. Region $\{Y<0\}$, compensation of terms in $XY^2$, $Y^3$ and one half of the term in $Y^2$ in \eqref{flow}}. We take the full terms in $Y^3$ and $XY^2$ in \eqref{flow} and only a half of the term in $Y^2$ in \eqref{flow}. After dividing by $Y^2$, we notice that the sign of the combination of these terms is the same as the sign of
$$
E_4(X,Y)=-\frac{(m-p)[(2N+\sigma-2)(\sigma+2)X+2m(\sigma+2)Y-2m]}{2(\sigma+2)}.
$$
Since now $Y<0$, it suffices to show that
$$
L_4(m,p,N,\sigma)=(2N+\sigma-2)(\sigma+2)X(P_2)-2m<0
$$
for any $\sigma\in(-2,0)$. We then have
\begin{equation*}
\begin{split}
L_4(m,p,N,\sigma)&=\frac{1}{2(mN-N+2)}\left[(m-1)^2\sigma^2+[2(m-1)^2N-2(m-1)(m-p)]\sigma\right.\\
&\left.-4(m-1)(m-p+1)N-4mp-4m+4p-4\right]
\end{split}
\end{equation*}
which is again a second degree polynomial in $\sigma$ with positive dominating coefficient. Noticing that at the endpoints one gets
\begin{equation*}
\begin{split}
&L_4(m,p,N,0)=-\frac{2(m-1)(m-p+1)N+2mp+2m-2p+2}{mN-N+2}<0, \\ &L_4(m,p,N,-2)=-\frac{2(m-1)(2m-p)N-4m^2+4mp+8m-4p}{mN-N+2},
\end{split}
\end{equation*}
and that the numerator of $L_4(m,p,N,-2)$ is a linear expression in $N$ with negative coefficient such that at $N=1$ we have
$$
L_4(m,p,1,-2)=-\frac{2mp+4m-2p}{m+1}<0,
$$
we infer that $L_4(m,p,N,\sigma)<0$ for any $\sigma\in(-2,0]$ and thus $E_4(X,Y)>0$.

\medskip

\noindent \textbf{Step 6. Region $\{Y<0\}$, compensation of terms in $XY^2$, $Y^3$ and one third of the term in $Y^2$ in \eqref{flow} if $m\leq3$}. In the same way as in the previous step, the sign of this combination of terms is the same as the sign of the expression
$$
\tilde{E}_4(X,Y)=-\frac{(m-p)[3(2N+\sigma-2)(\sigma+2)X+6m(\sigma+2)Y-4m]}{6(\sigma+2)}
$$
and we are interested to prove that the expression
$$
\tilde{L}_4(m,p,N,\sigma)=3(2N+\sigma-2)(\sigma+2)X(P_2)-4m<0
$$
for any $\sigma\in(-2,0)$, provided that $m\leq3$. We have
\begin{equation*}
\begin{split}
\tilde{L}_4(m,p,N,\sigma)&=\frac{1}{2(mN-N+2)}\left[3(m-1)^2\sigma^2+[6(m-1)^2N-6(m-1)(m-p)]\sigma\right.\\
&\left.-4(m-1)(2m-3p+3)N-12mp-4m+12p-12\right],
\end{split}
\end{equation*}
which is again a second degree polynomial in $\sigma$ with positive dominating coefficient. At the endpoint $\sigma=-2$ we notice that
$$
\tilde{L}_4(m,p,N,-2)=-\frac{4(m-1)(5m-3p)N-24m^2+24mp+40m-24p}{2(mN-N+2)}
$$
and this is negative for any $N\geq2$, since its numerator is a linear expression with respect to $N$, with positive coefficient, and at $N=2$ we readily have $\tilde{L}_4(m,p,2,-2)=-4m<0$. Things are a bit more complicated at the endpoint $\sigma=0$, since
$$
\tilde{L}_4(m,p,N,0)=-\frac{4(m-1)(2m-3p+3)N+12mp+4m-12p+12}{2(mN-N+2)}
$$
whose coefficient as a function of $N$ might change sign. But we notice that its numerator is a linear function of $p$ and that at $p=m$ and at $p=1$
$$
\tilde{L}_4(m,m,N,0)=\frac{4(m-1)(m-3)N-12m^2+8m-12}{2(mN-N+2)}, \ \tilde{L}_4(m,1,N,0)=\frac{-8(m-1)mN-16m}{2(mN-N+2)}
$$
both expressions have negative signs. It then follows that $\tilde{L}_4<0$ for any $\sigma\in(-2,0]$ and thus $\tilde{E}_4(X,Y)>0$ for any $m\in[1,3]$.

\medskip

\noindent \textbf{Step 7. Compensation of the terms in $X^2$, $Y^2$ and $XY$ in \eqref{flow}. End of the proof for $N\geq5$}. From previous steps, we were left with some parts of the terms in $X^2$ and $Y^2$, as either one half or one third of them (according to each step) have been used in different other balances of terms. We are thus interested in showing that an expression of the form
\begin{equation}\label{interm22}
\begin{split}
a&\frac{2m(m-p)}{\sigma+2}Y^2+\frac{(m-p)(2Nm+5m\sigma+2N+6m+\sigma-2)}{2(\sigma+2)(m+1)}XY\\&+b\frac{2(N-1)(m-1)-(3m+1)\sigma}{2(m+1)}X^2\\
&=\frac{(m-p)[(2Nm+5m\sigma+2N+6m+\sigma-2)X+8(m+1)amY]^2}{32am(m+1)^2(\sigma+2)}\\
&+b\left[\frac{2(N-1)(m-1)-(3m+1)\sigma}{2(m+1)}X^2-\frac{(m-p)(2Nm+5m\sigma+2N+6m+\sigma-2)^2}{32(\sigma+2)(m+1)^2abm}X^2\right]
\end{split}
\end{equation}
with $a$ and $b$ to be fixed according to each case, has positive sign. Let us notice that the first term in the right hand side of the equality \eqref{interm22} is a square, thus it is sufficient to prove that the terms in the final line of \eqref{interm22} give a positive contribution. Noticing that this sign only depends on the value of the product $ab$, we begin with $a=1/2$ and $b=2/3$ and find that the combination of the terms in the last line of \eqref{interm22} gives
$$
\frac{X^2}{48m(m+1)^2(\sigma+2)}L_5(m,p,N,\sigma),
$$
where $L_5(m,p,N,\sigma)$ is a linear term with respect to $p$ (whose expression we omit here for simplicity). It is then sufficient to show that it is negative at $p=m$ and at $p=p_c(\sigma)$ in order to conclude that it has negative sign in the middle, where we recall that $p_c(\sigma)$ has been introduced in \eqref{branch}. We indeed have
$$
L_5(m,m,N,\sigma)=16[2(N-1)(m-1)-(3m+1)\sigma](\sigma+2)(m+1)m>0.
$$
It is right now where the lower limit $p=p_c(\sigma)$ already established in Lemma \ref{lem.rooftop} comes into action. Indeed, evaluating $L_5$ at $p=1$ does not lead to the desired positive sign, but instead, since we already covered the interval $p\in[1,p_c(\sigma)]$, we can work starting from $p=p_c(\sigma)$. We thus have
$$
L_5(m,p_c(\sigma),N,\sigma)=\frac{P(m,N,\sigma)(\sigma+2)}{N+\sigma+2},
$$
where $P(m,N,\sigma)$ is a polynomial of second degree in $\sigma$ as follows
\begin{equation*}
\begin{split}
P(m,N,\sigma)&=-(3m-1)(41m^2+20m+3)\sigma^2\\&-[4(m+1)(19m^2-3)N+308m^3-76m^2+12m+12]\sigma\\
&+4(m-1)[(5m-3)(m+1)N^2-2(5m-3)(m+1)N-43m^2+2m-3].
\end{split}
\end{equation*}
Notice that $P(m,N,\sigma)$ has negative dominating coefficient and by analyzing $P(m,N,0)$ which is the free term (as a polynomial with respect to $N$) we easily notice that it is positive for any $N\geq5$ (but unfortunately not for $N=2,3,4$, cases that have to be considered separately in forthcoming steps). Moreover,
$$
P(m,N,-2)=4(m+1)[(5m-3)(m-1)N^2+4(m+1)(7m-3)N-12(m-1)(m+1)]
$$
is obviously positive for $N\geq5$. Thus $P(m,N,\sigma)>0$ for any $\sigma\in(-2,0]$, which implies also the positivity of the combination of terms we started with in this step. Thus, for $N\geq5$, if $m\geq3$ we complete the proof of the Lemma by using the fractions of terms as in Step 4, Step 5, together with $a=1/2$ and $b=2/3$ in the current step, while if $m\leq3$ we complete the proof by using the fractions of terms as in Step 1, Step 6 together with $a=2/3$ and $b=1/2$ in the current step.

\medskip

\noindent \textbf{Step 8. Dimensions $N=3,4$. Compensation of the terms in $X^2$, $Y^2$ and $XY$ in \eqref{flow}}. We are left with dimensions $N=3$ and $N=4$ where Step 7 above did not work with the coefficients $a=1/2$ and $b=2/3$. We thus work in a completely similar manner but letting $a=1/2$ and $b=9/10$ in \eqref{interm22}. In this case, the relevant term for the sign (given by the combination of terms in the last line of \eqref{interm22}) gives
$$
-\frac{Q(m,p,N,\sigma)}{80m(m+1)^2(\sigma+2)}X^2,
$$
where $Q(m,p,N,\sigma)$ is a linear function with respect to $p$ and at the same time a second degree polynomial in $\sigma$. On the one hand, for $p=m$ and $\sigma\in(-2,0]$, we compute
$$
Q(m,m,N,\sigma)=-36m(m+1)(\sigma+2)[2(N-1)(m-1)-(3m+1)\sigma]<0.
$$
On the other hand, if $p=p_c(\sigma)$, we get
$$
Q(m,p_c(\sigma),N,\sigma)=-\frac{\sigma+2}{N+\sigma+2}R(m,N,\sigma),
$$
where $R(m,N,\sigma)$ is a second degree polynomial in $\sigma$, more precisely
\begin{equation*}
\begin{split}
R(m,N,\sigma)&=-(233m^3+69m^2-9m-5)\sigma^2\\
&-(136Nm^3+164Nm^2+588m^3+8Nm-52m^2-20N+20m+20)\sigma\\
&+4(m-1)[(m+1)(13m-5)N^2-2(m+1)(6m-5)N-81m^2-6m-5].
\end{split}
\end{equation*}
It is easy to see that $R(m,N,0)$ (that is, the free term) is positive for $N\geq3$. Moreover,
$$
R(m,N,-2)=4(m+1)[(13m-5)(m-1)N^2+4(m+1)(14m-5)N-20(m-1)(m+1)]>0
$$
provided $N\geq3$. It then follows that $R(m,N,\sigma)>0$ and thus $Q(m,p,N,\sigma)<0$ for any $\sigma\in(-2,0]$, as desired.

\medskip

\noindent \textbf{Step 9. Dimensions $N=3,4$. Compensation of the term in $X^3$ with one tenth of the term in $X^2$ in \eqref{flow}}. Since we had to use $b=9/10$ in front of the term involving $X^2$ in Step 8, we are only left with $1/10$ of the term involving $X^2$ in \eqref{flow} in order to compensate the term in $X^3$, thus we need to improve Step 1 for dimensions $N=3$ and $N=4$. We again divide by $X^2$ and then notice that it is sufficient to prove that the remaining term is positive evaluated at $X=X(P_2)$, as we did in Step 1. Proceeding exactly the same as in Step 1, we obtain that the term whose sign we have to study is given by
\begin{equation*}
\begin{split}
P_3&(m,p,\sigma)=5(m+1)(m-1)^2\sigma^3+10(m-1)(m+1)(2m+p-3)\sigma^2\\
&+(-72m^3+40m^2p-40m^2+8m-40p+40)\sigma+32m(3m-1)(m-1),
\end{split}
\end{equation*}
for $N=3$ and
\begin{equation*}
\begin{split}
P_4&(m,p,\sigma)=5(m+1)(m-1)^2\sigma^3+10(m-1)(m+1)(2m+p-3)\sigma^2\\
&+(-156m^3+40m^2p+36m^2+76m-40p-20)\sigma+24(m-1)(8m^2-5p(m+1)+m+5),
\end{split}
\end{equation*}
for $N=4$. Since for $p\in[1,m]$ we have
$$
8m^2-5p(m-1)+m+5\geq8m^2-5m(m-1)+m+5=3m^2-4m+5>0,
$$
we notice that all the summands but the first one are positive for $\sigma\in(-2,0]$ in both polynomials $P_3$ and $P_4$, but the first term is dominated by the second term since
$$
5(m+1)(m-1)^2\sigma^3+10(m-1)(m+1)(2m+p-3)\sigma^2=5(m-1)(m+1)\sigma^2[(\sigma+4)(m-1)+2(p-1)]
$$
is a positive contribution. The proof is now complete also for dimensions $N=3$ and $N=4$.

\medskip

\noindent \textbf{Step 10. Dimension $N=2$}. We are left with the case of dimension $N=2$, which is easier than the previous ones. Let us notice that on the one hand Steps 2 and 3 (regarding the region $\{Y>0\}$) also hold true for $N=2$, while on the other hand the term in $X^3$ in \eqref{flow} is now positive, since $2N-\sigma-6=-(\sigma+2)<0$ for $N=2$. We are thus left with the region $\{Y<0\}$, but as we shall see below, the argument now follows just by completing squares. Thus, the terms in $Y^3$, $XY^2$ and $X^2Y$ in \eqref{flow} give for $N=2$
$$
-\left[m(m-p)\left(Y+\frac{(\sigma+2)X}{4m}\right)^2+\frac{(\sigma+2)^2(m-p)X^2}{16m}\right]Y>0,
$$
since we are working on the region $\{Y<0\}$. We are left with the terms in $X^2$, $Y^2$ and $XY$ in \eqref{flow}. Their joint contribution gives after completing squares the following term
\begin{equation}\label{interm23}
\begin{split}
H(X,Y)&=\frac{2m(m-p)}{\sigma+2}\left[Y+\frac{5m\sigma+10m+\sigma+2}{8m(m+1)}X\right]^2\\
&+\left[\frac{2(m-1)-(3m+1)\sigma}{2(m+1)}-\frac{(m-p)(5m+1)}{32m(m+1)^2}\right]X^2.
\end{split}
\end{equation}
It is then enough to show that the coefficient multiplying $X^2$ in \eqref{interm23} is positive in order to complete the proof. This is obvious at $p=m$ where the only negative term is canceled. Since this coefficient is a linear function of $p$, it suffices to prove its positivity also at $p=p_c(\sigma)$, where its expression is
$$
\frac{2(m-1)-(3m+1)\sigma}{2(m+1)}-\frac{(m-p_c(\sigma))(5m+1)}{32m(m+1)^2}=-\frac{R(m,\sigma)}{32m(m+1)^2(\sigma+4)},
$$
where
$$
R(m,\sigma)=(73m^3+49m^2+7m-1)\sigma^2+(260m^3+196m^2+60m-4)\sigma-4(m-1)(7m^2+22m-1).
$$
We readily observe that
$$
R(m,0)=-4(m-1)(7m^2+22m-1)<0, \qquad R(m,-2)=-256m^2(m+1)<0,
$$
hence $R(m,\sigma)<0$ for any $\sigma\in(-2,0]$ as a second degree polynomial with positive dominating coefficient. It thus follows that $H(X,Y)>0$ and the proof is complete.
\end{proof}
We infer from Lemma \ref{lem.walls} and Lemma \ref{lem.surface} that , for $\sigma\in(-2,0]$ and $p\in(p_c(\sigma),m)$, the region $\mathcal{W}$ limited by the planes $X=X(P_2)$, $Y=Y(P_2)$, $W=0$ and the surface \eqref{surface} is positively invariant for the flow of the system \eqref{PSsyst2}: once an orbit enters it, it cannot go out afterwards. We then analyze the orbits going out of $P_2$ and $P_0$.
\begin{lemma}\label{lem.roofsurf}
Let $N\geq2$, $\sigma\in(-2,0]$ and $p_c(\sigma)\leq p<m$. Then the orbits going out of $P_2$ and $P_0$ in the phase space associated to the system \eqref{PSsyst2} enter the region $\mathcal{W}$.
\end{lemma}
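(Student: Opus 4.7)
The plan is to reduce the claim to the invariance of $\mathcal{W}$ (which follows directly from previously established ingredients) plus a local verification near the starting points $P_0$ and $P_2$. Indeed, invariance of $\mathcal{W}$ under the flow of \eqref{PSsyst2} is immediate from three facts: Lemma \ref{lem.walls}, which handles the two side walls $\{X=X(P_2)\}$ and $\{Y=Y(P_2)\}$; the factored form $\dot W=W[(m+p-2)Y+(\sigma-2)X]$ of \eqref{PSsyst2}, which makes $\{W=0\}$ invariant; and Lemma \ref{lem.surface}, which forbids upward crossings of the surface \eqref{surface} inside $\{X<X(P_2)\}$. It thus remains to check that each orbit in question lies strictly inside $\mathcal{W}$ in a punctured right-neighborhood of its starting point.

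For an orbit leaving $P_0$, Lemma \ref{lem.P0} describes the center manifold by the leading relation $Z\sim KX$, together with $Y\sim(\alpha/\beta)X$ obtained from $U=(\beta/\alpha)Y-X\sim 0$. Consequently $W=XZ\sim KX^2$, while the surface value satisfies $W_{\mathrm{surf}}(X,Y)\sim\tfrac{2m}{m+1}X$ as $(X,Y)\to(0,0)$, since the quadratic terms in \eqref{surface} are subdominant. Hence $0<W<W_{\mathrm{surf}}$ for small $X>0$, and together with the obvious inequalities $X<X(P_2)$ and $Y<Y(P_2)$ this shows the orbit enters $\mathcal{W}$ immediately after leaving $P_0$.

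For the orbit leaving $P_2$, Lemma \ref{lem.P2} gives tangency at $P_2$ to the eigenvector $e_3$ whose components are displayed in \eqref{vector.P2}. Converting to the $(X,Y,W)$ coordinates via $W=XZ$ and using $Z|_{P_2}=0$, the $W$-component of this tangent vector is $X(P_2)\,Z(\sigma)>0$, while its $X$- and $Y$-components are negative. The orbit therefore enters the open octant $\{X<X(P_2),\,Y<Y(P_2),\,W>0\}$ at first order. To place it also below the surface \eqref{surface}, it suffices by continuity and $W|_{P_2}=0$ to establish the strict inequality
\begin{equation*}
W_{\mathrm{surf}}(X(P_2),Y(P_2))>0.
\end{equation*}

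Using the relation $Y(P_2)=\tfrac{2}{m-1}X(P_2)$ from \eqref{comp.P2}, this inequality reduces to a scalar condition of the form $\tfrac{2m}{m+1}+C\,X(P_2)>0$ with
\begin{equation*}
C=\frac{2(1-N-\sigma/2)}{m-1}-\frac{4m}{(m-1)^2}-\frac{(2N+\sigma-2)(\sigma+2)}{8m},
\end{equation*}
and since $C$ is generically negative, one must verify the corresponding upper bound for $X(P_2)=\tfrac{(m-1)[\sigma(m-1)+2(p-1)]}{2(\sigma+2)(mN-N+2)}$. I expect this algebraic verification to be the main obstacle; the plan is to resolve it by the same term-by-term compensation technique employed in the proof of Lemma \ref{lem.surface}, if necessary splitting the analysis according to the sign of $\sigma$ and to the relative sizes of $N$ and $m$ (the regime of small $X(P_2)$, corresponding to large $\phi:=mN-N+2$, being clearly favorable). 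Once this inequality is established, the orbit from $P_2$ also enters $\mathcal{W}$ immediately, and the invariance of $\mathcal{W}$ noted in the first paragraph prevents any later exit for either orbit, completing the proof.
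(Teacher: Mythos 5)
Your structural decomposition (invariance of $\mathcal{W}$ from Lemmas \ref{lem.walls} and \ref{lem.surface} plus the invariance of $\{W=0\}$, then a local check at each starting point) is exactly the one the paper uses, and your handling of $P_0$ is in fact a bit slicker than the paper's: rather than forming $W-W_0$ explicitly from the center manifold \eqref{center.P00} as the paper does, you observe that on the center manifold $W=XZ\sim KX^2$ is subdominant to the linear leading term $\tfrac{2m}{m+1}X$ of the surface, which is a clean and valid shortcut. Your reduction of the $P_2$ case to the single scalar inequality $W_{\mathrm{surf}}(X(P_2),Y(P_2))>0$ is also the same reduction the paper makes, and your transfer of the eigenvector $e_3$ from \eqref{vector.P2} into $(X,Y,W)$ coordinates (the $W$-component becoming $X(P_2)Z(\sigma)>0$ since $Z(P_2)=0$) is correct.

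The gap is that the inequality $W_{\mathrm{surf}}(X(P_2),Y(P_2))>0$ is asserted but not established. This is not a routine check that can be waved away: it is the technical core of the lemma, and the paper devotes the bulk of the proof to it, rewriting the quantity as $-S(m,p,N,\sigma)/[32m(m+1)(mN-N+2)^2(\sigma+2)^2]$ and proving $S<0$ by exploiting linearity in $p$ (evaluating at $p=1$ and $p=m$), then controlling each of the resulting cubic polynomials in $\sigma$ over $(-2,0]$ via endpoint evaluations and a sign check of the second derivative. Saying you ``plan to resolve it by the same term-by-term compensation technique employed in Lemma \ref{lem.surface}, if necessary splitting'' does not constitute a proof; until that sign analysis is actually carried out for all admissible $(m,p,N,\sigma)$ with $N\geq2$, $\sigma\in(-2,0]$, $p_c(\sigma)\leq p<m$, the $P_2$ half of the lemma remains open. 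Everything else in your argument is sound, so closing this one computation would complete the proof along essentially the paper's lines.
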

\begin{proof}
We prove first that the critical point $P_2$ lies itself ``below" the surface \eqref{surface}. To this end, it is enough to evaluate \eqref{surface} at $(X,Y)=(X(P_2),Y(P_2))$ and show that $W=W(X(P_2),Y(P_2))>0=W(P_2)$. We thus have at $(X,Y)=(X(P_2),Y(P_2))$
$$
W=-\frac{S(m,p,N,\sigma)}{32m(m+1)(mN-N+2)^2(\sigma+2)^2},
$$
where $S(m,p,N,\sigma)$ is a rather long expression (whose exact form we omit here for simplicity) which is linear in $p$. Moreover, we obtain at $p=m$
\begin{equation*}
\begin{split}
S(m,m,N,\sigma)&=(m-1)^2(\sigma+2)\left[(m-1)(m+1)\sigma^2\right.\\&\left.+2(m+1)((m-1)N+4m)\sigma-4(m-1)(3m-1)(N-1)\right].
\end{split}
\end{equation*}
and the factor in brackets is negative both at $\sigma=0$ and at $\sigma=-2$, where it gives $-16m(mN-N+2)<0$. Since this factor is a second degree polynomial in $\sigma$ with positive dominating coefficient, we infer that $S(m,m,N,\sigma)<0$ for any $\sigma\in(-2,0]$. Evaluating now at $p=1$, we get
\begin{equation*}
\begin{split}
S(m,1,N,\sigma)&=(m+1)(m-1)^3\sigma^3+2(m-1)^2(m+1)[(m-1)N+4m]\sigma^2\\
&-4(m-1)(3m-1)(N-1)\sigma-64(m-1)m^2(mN-N+2),
\end{split}
\end{equation*}
which is a cubic polynomial in $\sigma$ such that the values at the endpoints $\sigma=0$ and $\sigma=-2$
$$
S(m,1,N,0)=-64(m-1)m^2(mN-N+2), \ S(m,1,N,-2)=-32m(m-1)(m+1)(mN-N+2),
$$
are both negative and its second derivative is positive for any $\sigma\in[-2,0]$ (we leave to the reader the verification of this rather easy fact). By standard calculus results it follows that $S(m,1,N,\sigma)<0$ and thus $S(m,p,N,\sigma)<0$ for any $\sigma\in(-2,0]$ and $p\in(p_c(\sigma),m)$. This proves that the point $P_2$ lies below the surface \eqref{surface} and thus Lemmas \ref{lem.P2} and \ref{lem.walls} imply that the orbit from $P_2$ goes out into $\mathcal{W}$.

We draw now our attention to the orbits going our of $P_0$. We recall from Lemma \ref{lem.P0} that these orbits go out tangent to the center manifold \eqref{center.P0}, which in variables $(X,Y,W)$ writes
\begin{equation}\label{center.P00}
W_0=X-\frac{\beta}{\alpha}Y-\frac{\alpha}{\beta^2}[(N-2)\beta+m\alpha]X^2.
\end{equation}
We then notice that
\begin{equation*}
\begin{split}
&W-W_0=\frac{m-1}{m+1}X+\frac{m-p}{\sigma+2}Y-\frac{2N+\sigma-2}{2}XY-mY^2+\frac{\sigma+2}{8m(m-p)^2}A(m,p,N,\sigma)X^2,\\
&A(m,p,N,\sigma)=(7m^2+2mp-p^2)\sigma+2(3m+p)(m-p)N+2m^2+12mp+2p^2,
\end{split}
\end{equation*}
hence in a small neighborhood of the critical point $P_0$ we find $W-W_0>0$, since the orbits going out of $P_0$ enter the positive half-space $\{Y>0\}$. This shows that the orbits go out directly into $\mathcal{W}$.
\end{proof}
We represent in the Figure \ref{fig2} the surface defined in \eqref{surface} and the center manifold given in \eqref{center.P00}, and we see that for $X$ sufficiently small, the center manifold lies in the region $\mathcal{W}$, as proved. 

\begin{figure}[ht!]
  \begin{center}
  \includegraphics[width=11cm,height=7.5cm]{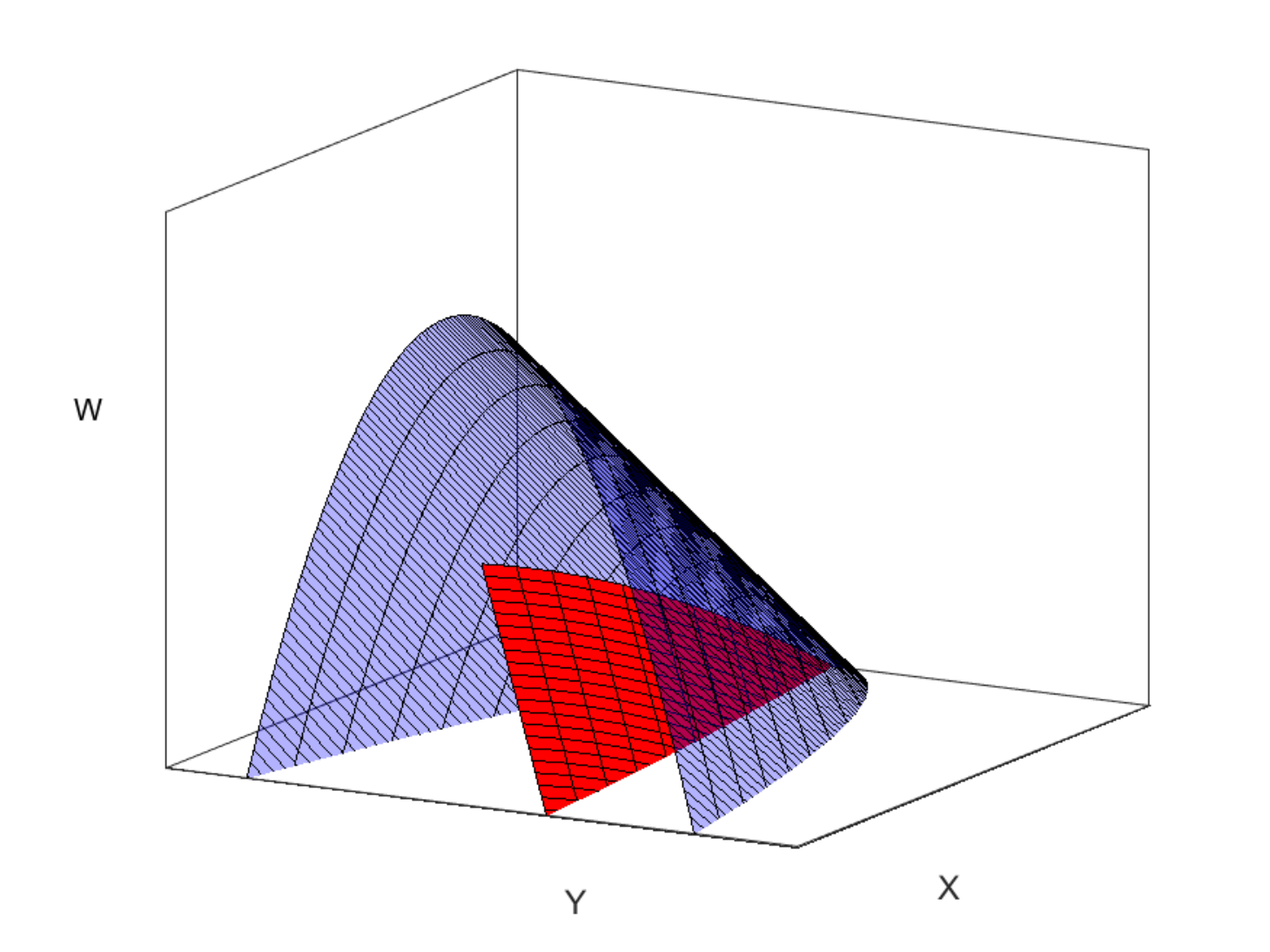}
  \end{center}
  \caption{The surface defined in \eqref{surface} and the center manifold \eqref{center.P00}.}\label{fig2}
\end{figure}

\medskip

\noindent \textbf{Differences in dimension $N=1$}. We are left with dimension $N=1$, where the previous analysis does not work. In change, we introduce a \emph{different surface} for this special case:
\begin{equation}\label{surface1}
\begin{split}
W&=-\frac{(m+p)\sigma(2m\sigma+3m+p)}{2m(3m+p)^2}X^2-\frac{(m+p)\sigma}{3m+p}XY-\frac{m+p}{2}Y^2\\
&+\frac{2(m+p)[(m+p)\sigma+3m+p]}{(m+1)(3m+p)(\sigma+2)}X.
\end{split}
\end{equation}
The following result shows that the surface \eqref{surface1} does the desired job for $N=1$.
\begin{lemma}\label{lem.roofsurfN1}
Let $N=1$. Then for any $\sigma\in(-2,0]$ and any $p\in[1,m)$, the region $\mathcal{V}$ limited by the planes $X=X(P_2)$, $Y=Y(P_2)$, $W=0$ and the surface \eqref{surface1} is positively invariant for the flow of the system \eqref{PSsyst2} and the orbits going out of $P_0$ and $P_2$ enter this region.
\end{lemma}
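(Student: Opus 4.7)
The plan is to mimic the strategy used in Lemmas \ref{lem.surface} and \ref{lem.roofsurf}, replacing the surface \eqref{surface} by the new surface \eqref{surface1} which is designed specifically for $N=1$ (and which makes the critical compensations tractable without splitting the $p$-interval in two). Three sides of the boundary of $\mathcal{V}$ require only standard verifications: $\{X=X(P_2)\}$ and $\{Y=Y(P_2)\}$ are one-sided barriers by Lemma \ref{lem.walls}, while $\{W=0\}$ is invariant for \eqref{PSsyst2} since $\dot W = W[(m+p-2)Y+(\sigma-2)X]$. Thus the actual work concerns the new surface and the fact that the relevant orbits enter $\mathcal{V}$.

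The main computation is to evaluate the flow of \eqref{PSsyst2} through \eqref{surface1}. Denoting the right-hand side of \eqref{surface1} by $\Psi(X,Y)$, the outward normal is $\overline n = (\Psi_X, \Psi_Y, -1)$, and I will compute the scalar product
\begin{equation*}
F_1(X,Y) = \Psi_X \dot X + \Psi_Y \dot Y - \dot W,
\end{equation*}
after substituting $W=\Psi(X,Y)$ and $N=1$ in \eqref{PSsyst2}. The resulting expression is a polynomial in $X$ and $Y$ with a manageable number of monomials, and the goal is to show $F_1(X,Y) > 0$ throughout the rectangular region $0 \le X < X(P_2)$, $Y < Y(P_2)$. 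I would split the verification into $Y \ge 0$ and $Y < 0$ just as in Steps 2--7 of the proof of Lemma \ref{lem.surface}, regrouping the monomials in complete squares whenever possible and otherwise dividing a common factor ($X^2$, $XY$, $Y^2$) and checking the sign of the resulting affine expression at the endpoint $X=X(P_2)$, using the linearity in $p$ to reduce to the two extremes $p=1$ and $p=m$. Because $N=1$ kills the $(N-1)/\xi$ contribution in the original ODE, several of the awkward coefficients in \eqref{flow} disappear, and I expect the algebra to be substantially lighter than in higher dimensions.

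Once the flow across the surface is understood, the inclusion of the relevant orbits in $\mathcal{V}$ follows as in Lemma \ref{lem.roofsurf}. For $P_2$, I evaluate $\Psi(X(P_2),Y(P_2))$ and verify positivity; this is again a polynomial identity that reduces to checking the sign at $p=1$ and $p=m$ thanks to linearity in $p$, and then in $\sigma$ at $\sigma=0$ and $\sigma=-2$ together with a monotonicity/convexity control in the interior. Since the unique orbit leaving $P_2$ moves initially into $\{W>0\}$ along the direction $e_3$ of \eqref{vector.P2}, the inequality $\Psi(X(P_2),Y(P_2))>0$ suffices to place that orbit inside $\mathcal{V}$. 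For $P_0$, the center manifold is still given by \eqref{center.P00} (it does not depend on the precise choice of barrier surface), and in a sufficiently small neighborhood of the origin where the orbits enter the half-space $\{Y>0\}$, the dominant balance $\Psi(X,Y) - W_0 \sim \tfrac{m-1}{m+1}X + \tfrac{m-p}{\sigma+2}Y > 0$ of the linear terms shows that the orbits emerge into $\mathcal{V}$.

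The main obstacle will be the flow computation $F_1(X,Y)>0$: as in the $N \ge 2$ case, no single square completion works uniformly, and the argument hinges on partitioning the available portion of a positive monomial (typically the $X^2$ or the $Y^2$ coefficient) between several compensations, then checking that each group of grouped monomials has the right sign. The choice of coefficients in \eqref{surface1} was made precisely so that this splitting succeeds in one go for $N=1$ and the whole interval $p \in [1,m)$, without needing to introduce the auxiliary threshold $p_c(\sigma)$; the verification is nonetheless delicate and I would carry it out with the aid of a symbolic computing software to avoid errors in collecting coefficients.
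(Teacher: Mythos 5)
Your plan is essentially the paper's argument, and the paper itself only sketches the details, deferring to the same style of computations as in Lemmas \ref{lem.surface} and \ref{lem.roofsurf}. Two points are worth flagging.

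First, you anticipate that the flow computation for \eqref{surface1} will again require a split into $\{Y\ge 0\}$ and $\{Y<0\}$ with partial square completions, but in fact the surface was chosen so that the flow expression $F_1(X,Y)$ contains \emph{only} a nonnegative $Y^2$-term and an $X^2$-term whose coefficient factors as $\frac{(m+p)\sigma}{(3m+p)^2}X^2$ times an affine-in-$X$ bracket; see \eqref{flow1}. Since the prefactor is nonpositive for $\sigma\in(-2,0]$, positivity of $F_1$ reduces to the single affine check that the bracket is negative at $X=X(P_2)$, done by linearity in $p$ at $p=1,m$ exactly as in Step 1 of Lemma \ref{lem.surface}. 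No case-splitting in $Y$, no square completions. This is why the authors could afford a single surface for the whole interval $p\in[1,m)$ in dimension one.

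Second, your claimed linear part $\Psi-W_0\sim\frac{m-1}{m+1}X+\frac{m-p}{\sigma+2}Y$ for the orbits out of $P_0$ is the $N\ge2$ formula. For $N=1$ the linear $X$-term of \eqref{surface1} is $\frac{2(m+p)[(m+p)\sigma+3m+p]}{(m+1)(3m+p)(\sigma+2)}X$, so the $X$-coefficient of $\Psi-W_0$ is
$$
\frac{\big[-m^2+3mp+2p^2-3m-p\big]\sigma+2(3m+p)(p-1)}{(m+1)(3m+p)(\sigma+2)},
$$
which is linear in $p$, nonnegative at $p=1$ (where it equals $-(m-1)(m+1)\sigma\ge0$) and at $p=m$ (where it equals $4m(m-1)(\sigma+2)>0$), hence nonnegative on $[1,m]\times(-2,0]$. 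It can vanish (at $p=1$, $\sigma=0$), but the $Y$-coefficient $\frac{m-p}{\sigma+2}$ is strictly positive and the orbits out of $P_0$ enter $\{Y>0\}$, so the conclusion that they enter $\mathcal{V}$ still holds; just do not quote $\frac{m-1}{m+1}$ as the $X$-coefficient here.
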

\begin{proof}[Sketch of the proof]
The flow of the system \eqref{PSsyst1} on the surface \eqref{surface1} is given by the sign of the expression
\begin{equation}\label{flow1}
\begin{split}
&F_1(X,Y)=\frac{(m+p)(m-p)}{\sigma+2}Y^2+\frac{(m+p)\sigma}{(3m+p)^2}X^2\\
&\times\left[\frac{(2m\sigma+3m+p)(m\sigma+3m+p)}{m(3m+p)}X-\frac{(7m^2+5mp+3m+p)\sigma+2(3m+1)(3m+p)}{(m+1)(\sigma+2)}\right]
\end{split}
\end{equation}
and in order to prove that the sign of $F_1(X,Y)$ is positive for $\sigma\in(-2,0)$ it is sufficient to check that the factor in brackets in the second line of \eqref{flow1} is negative, which is analyzed at $X=X(P_2)$ in a similar way as in Step 1 in the proof of Lemma \ref{lem.surface}. Moreover, rather similar calculations as in the proof of Lemma \ref{lem.roofsurf} give that $P_2$ lies ``below" the surface \eqref{surface1}, while the orbits going out of $P_0$ tangent to the center manifold \eqref{center.P0} do it also inside the region $\mathcal{V}$, since the coefficients of the linear terms in $X$ and $Y$ in the difference of surfaces are still positive for any $\sigma\in(-2,0)$. We omit here the technical details, since they follow the same arguments as in the proofs of Lemmas \ref{lem.surface} and \ref{lem.roofsurf}.
\end{proof}
We are finally in a position to complete the proof of Theorem \ref{th.class}, Part 1.
\begin{proof}[Proof of Theorem \ref{th.class}, Part 1]
We infer from Lemmas \ref{lem.walls}, \ref{lem.rooftop}, \ref{lem.surface},  and \ref{lem.roofsurf} in dimension $N\geq2$, respectively Lemma \ref{lem.roofsurfN1} in dimension $N=1$, that for any $p\in[1,m)$ and for any $\sigma\in(-2,0]$, the orbits going out of both $P_0$ and $P_2$ have to remain forever inside the positively invariant regions $\mathcal{W}$, respectively $\mathcal{V}$ and thus cannot reach the critical point $P_1$. Moreover, this property can be extended by standard continuity arguments up to some $\sigma_0>0$. We then infer from Theorem \ref{th.exist} that the only orbits entering $P_1$ and containing good profiles must necessarily come from the critical point $Q_1$ and we are done.
\end{proof}

\noindent \textbf{Remarks.} (a) In particular, we have given an alternative proof (to the one in \cite[Theorem 2, Section 1.3, Chapter 4]{S4}) of the existence of solutions stemming from the critical point $Q_1$ (in our notation) for the non-weighted case $\sigma=0$. Let us stress here that we also show that solutions with $f(0)>0$, $f'(0)=0$ are the \emph{only} possible solutions for $\sigma=0$.

\medskip

(b) It is very likely that the orbits going out of $P_0$ and $P_2$ for $\sigma\in(-2,\sigma_0)$ will actually enter the critical point $P_{\gamma}$. In order to prove rigorously this fact, we have to avoid the existence of limit cycles, which is not an easy problem in three-dimensional dynamical systems and when no coordinate among $X$, $Y$, $Z$ (or $W$) is monotone along the orbits we are looking at. We have dealt with such a situation in the recent work \cite[Section 2.3]{IMS22} and we believe that similar arguments might be used to avoid the existence of limit cycles, but we do not wish to enter this discussion here. We will go back to this question only for $\sigma>0$ (when some monotonicity is granted on the orbit going out of $P_2$) at the end of the next section.

\section{Classification of the profiles: $\sigma$ large}\label{sec.large}

In this final section, we complete the proof of Theorem \ref{th.class} by proving the remaining two items of it, dealing with values of $\sigma$ much larger than zero. Some parts of the proof might look a bit tedious and some of the calculations were performed with the aid of a symbolic calculation tool. We first need a preparatory result.
\begin{lemma}\label{lem.monot}
Let $\sigma>0$. Then the coordinate $X$ is decreasing and the coordinate $Y$ is also decreasing in the half-space $\{Y\geq0\}$ along the unique orbit going out of $P_2$ in the phase space associated to the system \eqref{PSsyst1}.
\end{lemma}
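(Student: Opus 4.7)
The strategy is to establish the two sign conditions $U := (m-1)Y - 2X < 0$ (which is equivalent to $\dot X = XU < 0$) and $\dot Y < 0$ simultaneously along the orbit in the open half-space $\{Y > 0\}$, by means of a single combined continuity argument. The boundary case $Y = 0$ is then handled separately by direct inspection of the right-hand sides of \eqref{PSsyst1}.

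First I would use the explicit components of the eigenvector $e_3$ from \eqref{vector.P2} to check by direct substitution that $(m-1)Y(\sigma) - 2X(\sigma) = -(m-1)^2 L$, where $L = \sigma(m-1) + 2(p-1) > 0$ by \eqref{range.exp}. Hence $U < 0$ immediately after the orbit leaves $P_2$, while $\dot Y \sim \lambda_3\, Y(\sigma) < 0$ along the same direction. Moreover, since the $Z$-component of $e_3$ is strictly positive, $Z$ becomes strictly positive immediately after $P_2$, and then $\dot Z = Z[(p-1)Y + \sigma X] > 0$ for $Y \geq 0$ keeps $Z$ strictly positive throughout the region of interest.

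The core of the argument is then a contradiction: let $\eta_*$ be the first time at which either $U(\eta_*) = 0$ or $\dot Y(\eta_*) = 0$ occurs while $Y(\eta_*) > 0$. A direct computation shows that, whenever $\dot Y = 0$,
\begin{equation*}
\ddot Y = X\bigl[U(1-NY-Z) - Z((p-1)Y + \sigma X)\bigr],
\end{equation*}
while $\dot Y = 0$ together with $Y > 0$ forces $1-NY-Z = Y(Y+\beta/\alpha)/X > 0$. Provided $U \leq 0$, $Z > 0$ and $\sigma > 0$, the first bracketed term is non-positive and the second is strictly negative, so $\ddot Y < 0$. Three scenarios can occur at $\eta_*$, and each collapses. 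If $U(\eta_*)=0$ while $\dot Y(\eta_*) < 0$, then $\dot U(\eta_*) = (m-1)\dot Y(\eta_*) < 0$ forces $U$ to be positive just before $\eta_*$, contradicting $U < 0$ there. If instead $\dot Y(\eta_*) = 0$ (whether $U(\eta_*) = 0$ or $U(\eta_*) < 0$), then $\ddot Y(\eta_*) < 0$ forces $\dot Y > 0$ just before $\eta_*$, contradicting $\dot Y < 0$ there. Consequently, no such $\eta_*$ exists, and both $U < 0$ and $\dot Y < 0$ hold throughout the portion of the orbit with $Y > 0$.

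The main obstacle is precisely the interdependence of the two sign conditions, since to rule out $Y$ becoming non-monotone one seems to need $X$ non-increasing, and vice versa. This is resolved by the first-exit-time argument above, in which the hypothesis $\sigma > 0$, combined with $Z > 0$ and $Y > 0$, is exactly what makes the term $Z((p-1)Y + \sigma X)$ strictly positive in the formula for $\ddot Y$ and delivers the required strict negativity. Strict monotonicity of $X$ then extends up to the boundary $Y = 0$, where $\dot X = -2X^2 < 0$, completing the conclusion in the closed half-space $\{Y \geq 0\}$.
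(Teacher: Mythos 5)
Your proof is correct. The computation $(m-1)Y(\sigma)-2X(\sigma)=-(m-1)^2 L$ from \eqref{vector.P2} does indeed give $U<0$ and $\dot Y<0$ in a punctured neighborhood of $P_2$ on the orbit tangent to $e_3$ with $Z>0$, the identity
\begin{equation*}
\ddot Y\big|_{\dot Y=0}=X\bigl[U(1-NY-Z)-Z((p-1)Y+\sigma X)\bigr]
\end{equation*}
is verified directly by differentiating the second line of \eqref{PSsyst1} and using $\dot X=XU$, $\dot Z=Z[(p-1)Y+\sigma X]$, and the positivity $1-NY-Z=Y(Y+\beta/\alpha)/X>0$ at a zero of $\dot Y$ with $Y>0$ follows from $\alpha,\beta>0$ and $X>0$ (the plane $\{X=0\}$ being invariant). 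The three scenarios at the first-exit time $\eta_*$ are exhaustive and each leads to the stated contradiction, using $p\geq 1$, $\sigma>0$, and $Z>0$ to force $\ddot Y<0$.

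The paper does not reproduce a proof of this lemma but refers to \cite[Lemma 5.3]{IS21a} and \cite[Lemma 5.1]{IMS22}, where the analogous statement is proved ($N=1$, respectively the case $p>m$). Your argument is the natural self-contained version of that approach: a first-exit-time contradiction controlling $U=(m-1)Y-2X$ and $\dot Y$ simultaneously. One small presentational point: you might state explicitly at the outset that $X>0$ and $Z>0$ along the whole orbit after $P_2$ because $\{X=0\}$ and $\{Z=0\}$ are invariant and the orbit leaves $P_2$ into $\{X>0,Z>0\}$; you invoke both facts repeatedly but only mention $Z$ explicitly.
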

The proof is totally similar to the one of \cite[Lemma 5.3]{IS21a} where the same result is proved in the case $N=1$, or \cite[Lemma 5.1]{IMS22}. We are now in a position to proceed to the proof of the remaining items of Theorem \ref{th.class} and we begin with the last one of them.
\begin{proof}[Proof of Theorem \ref{th.class}, Part 3]
The proof is again divided into several steps for the reader's convenience. The scheme of the proof is based on constructing a region in the phase space, limited by two planes passing through $P_2$, such that any orbit entering this region has to go very far away in the sense of negative values of $Y$ in order to get out of it, and thus be forced to connect to $Q_3$. We then show that all the orbits stemming from $P_2$ and $Q_1$ for sufficiently large $\sigma$ cannot enter the critical point $P_1$, thus Theorem \ref{th.exist} implies that the good profiles contained in orbits entering $P_1$ must come from the remaining point $P_0$. Let us set $Y_0=(m-1)/2$.

\medskip

\noindent \textbf{Step 1. Plane of no return}. In this step, we show that, if an orbit of the system \eqref{PSsyst1} crosses the plane $\{Y=-Y_0\}$, then it has to enter the critical point $Q_3$. The direction of the flow of the system across the plane $\{Y=-Y_0\}$ is given by the sign of the expression
$$
R(X,Z)=\frac{mN-N+2}{2}X-\frac{(m-1)[\sigma(m-1)+2(p-1)]}{4(\sigma+2)}-XZ,
$$
and since $0\leq X\leq X(P_2)$ over the orbit going out of $P_2$, we observe that
$$
R(0,Z)=-\frac{(m-1)[\sigma(m-1)+2(p-1)]}{4(\sigma+2)}<0, \ R(X(P_2),Z)=-X(P_2)Z<0,
$$
hence $R(X,Z)<0$ for any $X\in[0,X(P_2)]$. It follows that the plane $\{Y=-Y_0\}$ cannot be crossed from the negative part, thus an orbit entering the region $\{Y<-Y_0\}$ will stay there forever. We then infer from the first and the second equation in the system \eqref{PSsyst1} that in the region $\{Y<-Y_0\}$ both components $X$ and $Y$ are monotone over any orbit, thus the orbit going out of $P_2$ cannot end up in a limit cycle. It has then to enter the stable node $Q_3$.

\medskip

\noindent \textbf{Step 2. Definitions of the two limiting planes}. We introduce the following planes
\begin{equation}\label{PlaneYZ}
DY+Z=E, \qquad D=\frac{2(mN-N+2)^2}{m-1}, \ E=\frac{2(mN-N+2)}{\alpha(m-1)}.
\end{equation}
and
\begin{equation}\label{PlaneXY}
\begin{split}
X=BY+C, \qquad &B=\frac{m(m-1)}{2(m-1)^2N^2+7(m-1)N+2(m+3)}, \\ &C=\frac{(2mN-2N+3)(m-1)[\sigma(m-1)+2(p-1)]}{2(\sigma+2)[2(m-1)^2N^2+7(m-1)N+2(m+3)]}.
\end{split}
\end{equation}
It is easy to check that both planes \eqref{PlaneYZ} and \eqref{PlaneXY} contain the critical point $P_2$. We show next that the following region limited by them
\begin{equation}\label{reg.large}
\mathcal{D}:=\{(X,Y,Z)\in\real^3: X>BY+C, Z>E-DY, 0\leq X\leq X(P_2), -Y_0\leq Y\leq Y(P_2)\}
\end{equation}
is almost invariant for the flow of the system \eqref{PSsyst1}, in the sense that it can be left only by crossing the plane $\{Y=-Y_0\}$. We plot the region $\mathcal{D}$ in Figure \ref{fig3}.

\begin{figure}[ht!]
  \begin{center}
  \includegraphics[width=11cm,height=7.5cm]{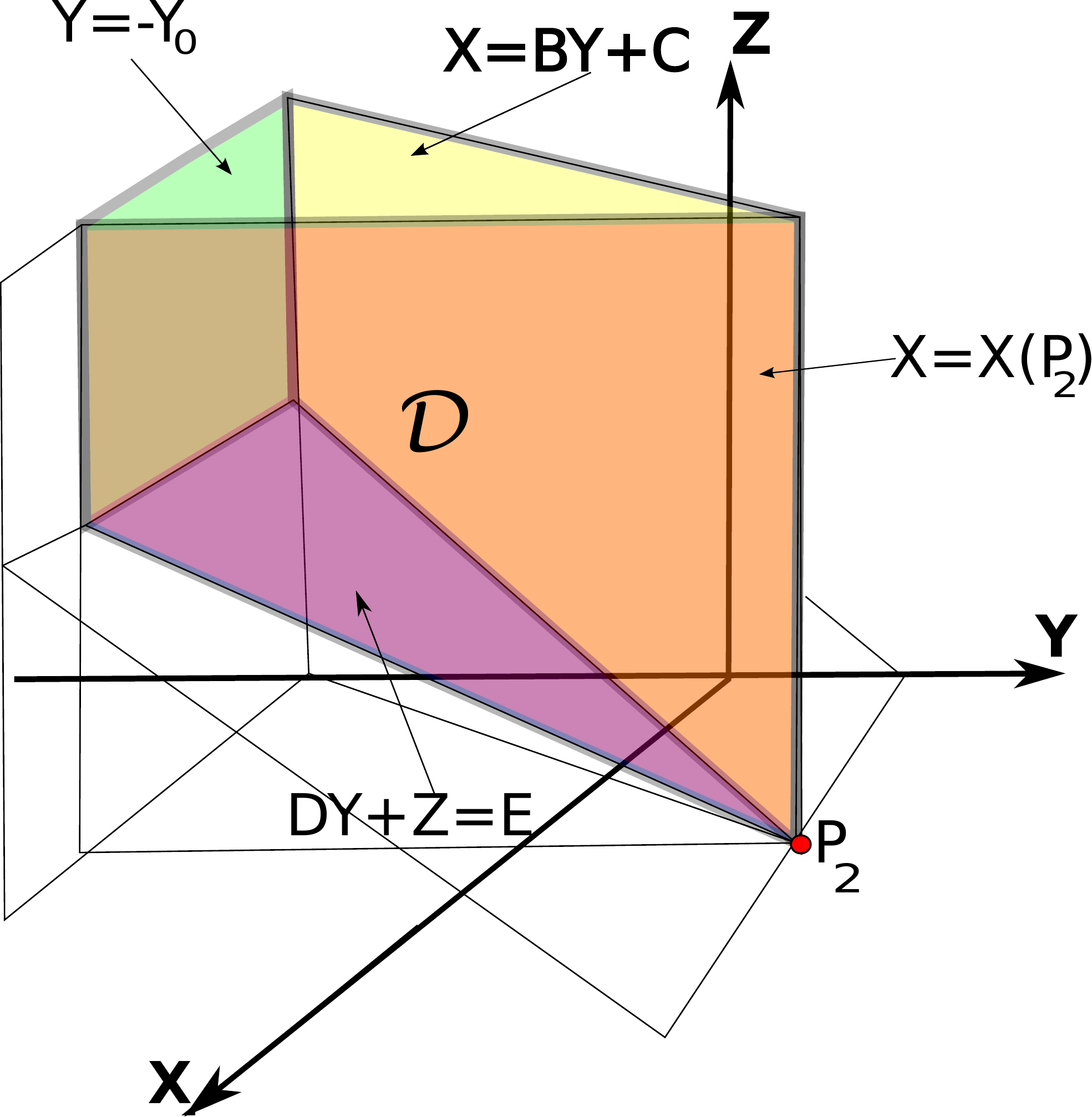}
  \end{center}
  \caption{The region $\mathcal{D}$ defined in \eqref{reg.large}.}\label{fig3}
\end{figure}

\medskip

\noindent \textbf{Step 3. Flow on the first plane}. The direction of the flow of the system \eqref{PSsyst1} over the plane \eqref{PlaneYZ} is given by the sign of the following rather complicated expression
\begin{equation}\label{flow.planeYZ}
\begin{split}
F(X,Y)&=-\frac{2(mN-N+2)^2p}{m-1}Y^2\\&-\frac{2(mN-N+2)[-(p-1)(m-1)\sigma+(m-1)(m-p)N-2p^2+2m+2p-2]}{(m-1)(\sigma+2)}Y\\
&+\frac{2(mN-N+2)^2[2(m-1)^2N^2+7(m-1)N+8-(m-1)\sigma]}{(m-1)^2}XY\\&-\frac{2(mN-N+2)L(m,p,N,\sigma)}{(m-1)^2(\sigma+2)}X\\
&=K(Y)X-\frac{2(mN-N+2)^2p}{m-1}Y^2\\&-\frac{2(mN-N+2)[-(p-1)(m-1)\sigma+(m-1)(m-p)N-2p^2+2m+2p-2]}{(m-1)(\sigma+2)}Y,
\end{split}
\end{equation}
where
\begin{equation*}
\begin{split}
L(m,p,N,\sigma)&=-(m-1)^2\sigma^2+[2(m-1)^3N^2+7(m-1)^2N-2(p-4)(m-1)]\sigma\\&+4(m-1)^2(p-1)N^2-2(m-1)(m-8p+7)N-4m+16p-12
\end{split}
\end{equation*}
and
\begin{equation*}
\begin{split}
K(Y)&=\frac{2(mN-N+2)^2[2(m-1)^2N^2+7(m-1)N+8-(m-1)\sigma]}{(m-1)^2}Y\\&-\frac{2(mN-N+2)L(m,p,N,\sigma)}{(m-1)^2(\sigma+2)}.
\end{split}
\end{equation*}
Let us notice first that, for $\sigma$ sufficiently large, the coefficient of $Y$ in the expression of $K(Y)$ is negative, while the free term is positive. It thus follows that $K(Y)>0$ for $Y\leq0$ and $\sigma$ large enough. Moreover, we are interested in the invariant region $\mathcal{D}$ defined in \eqref{reg.large}, thus $Y\leq Y(P_2)$ and we further have
$$
K(Y(P_2))=\frac{4(mN-N+2)(mN-pN+\sigma+2)}{(m-1)(\sigma+2)}>0,
$$
hence by linearity we infer that $K(Y)>0$ for any $Y\in(-\infty,Y(P_2))$. Therefore, the coefficient of $X$ in the expression of the flow \eqref{flow.planeYZ} is positive in the region of interest and this implies that, in the region $\mathcal{D}$, we get
$$
F(X,Y)>F(BY+C,Y)=\frac{(mN-N+2)^2(Y-Y(P_2))}{(\sigma+2)(m-1)[2(m-1)^2N^2+7(m-1)N+2(m+3)]}A(Y;\sigma),
$$
where
\begin{equation*}
A(Y;\sigma)=-[2m(m-1)Y+(m-1)^2(2mN-2N+3)]\sigma^2+K_1(Y)\sigma+K_2(Y),
\end{equation*}
with $K_1$ and $K_2$ continuous functions of $Y$ (thus bounded for $Y\in[-Y_0,Y(P_2)]$) whose expressions we omit here since for $\sigma$ sufficiently large the first term in $A(Y;\sigma)$ is the dominating one. Since
$$
-2m(m-1)Y_0+(m-1)^2(2mN-2N+3)=-(m-1)^2[(2N-1)(m-1)+2]<0,
$$
the coefficient of $\sigma^2$ is always negative for $Y\in[-Y_0,Y(P_2)]$ and thus $F(X,Y)>F(BY+C,Y)>0$ for $Y\in[-Y_0,Y(P_2)]$ and $X>BY+C$. This implies that orbits cannot leave the region $\mathcal{D}$ by crossing its wall given by the plane \eqref{PlaneYZ} as the flow across it points towards the interior of $\mathcal{D}$.

\medskip

\noindent \textbf{Step 4. Flow on the second plane}. We want in this step to prove a similar property as in Step 3 but for the wall of the region $\mathcal{D}$ given by the plane \eqref{PlaneXY}. The direction of the flow of the system \eqref{PSsyst1} over the plane \eqref{PlaneXY} is given by the sign of the expression
\begin{equation}\label{flow.planeXY}
H(Y,Z)=\frac{(m-1)(H_1(Y)Z+H_2Y^2+(\sigma+2)(mN-N+2)H_3Y+H_4)}{2(\sigma+2)^2[2(m-1)^2N^2+7(m-1)N+2(m+3)]^2},
\end{equation}
with
$$
H_1(Y)=2m^2(\sigma+2)^2(m-1)Y+m(m-1)(\sigma+2)(2mN-2N+3)[\sigma(m-1)+2(p-1)],
$$
$$
H_2=4m^2(\sigma+2)^2(mN-N+2)^2,
$$
and the lower order terms
\begin{equation*}
\begin{split}
H_3&=[4(m-1)^4N^2+2(m-1)^2(7m-6)N-(2m-3)^2(m-1)]\sigma+8(m-1)^2(p-1)N^2\\
&+4(m-1)^2(m+6p-6)N-8m^2p+14m^2+18mp-24m-18p+18
\end{split}
\end{equation*}
and finally
\begin{equation*}
\begin{split}
H_4&=(2mN-2N+3)(m-1)(\sigma(m-1)+2(p-1))\\
&\times[(2(m-1)^2N+4m-3)\sigma+4(p-1)(m-1)N+2m+6(p-1)].
\end{split}
\end{equation*}
We notice that $H_1(Y)$ is an affine function of $Y$ which is increasing, therefore
\begin{equation*}
\begin{split}
H_1(Y)>H_1(-Y_0)&=m(m-1)(\sigma+2)\left[(2(m-1)^2N-(m-1)(m-3))\sigma\right.\\&\left.+4(p-1)(m-1)N-2m^2+2m+6p-6\right]
\end{split}
\end{equation*}
which is positive for $\sigma$ sufficiently large. We infer that the coefficient of $Z$ in \eqref{flow.planeXY} is positive and thus $H(Y,Z)>H(Y,E-DY)$ in the region $\mathcal{D}$ introduced in \eqref{reg.large}. Since
\begin{equation*}
\begin{split}
H(Y,E-DY)&=\frac{(m-1)(mN-N+2)(2mN-2N+3)(Y(P_2)-Y)}{2(\sigma+2)[(2(m-1)^2N+4m-3)\sigma+4(p-1)(m-1)N+2m+6(p-1)]^2}\\
&\times\left[(2mN-2N+3)(m-1)\sigma+4(p-1)(m-1)N-2m^2+2mp+6(p-1)\right]>0
\end{split}
\end{equation*}
provided $\sigma>0$ is taken sufficiently large, we infer that an orbit entering the region $\mathcal{D}$ cannot leave it by crossing its wall defined by the plane \eqref{PlaneXY}.

\medskip

\noindent \textbf{Step 5. The orbit from $P_2$ enters the region $\mathcal{D}$}. We prove here that for $\sigma$ sufficiently large, the unique orbit going out of $P_2$ enters the region $\mathcal{D}$ introduced in \eqref{reg.large}. We know that the orbit goes out tangent to the eigenvector $e_3$ whose components $(X(\sigma),Y(\sigma),Z(\sigma))$ are given in \eqref{vector.P2}. We only need to compute the product between the direction of this vector and the normal directions to these planes and study its sign for $\sigma$ sufficiently large. For the plane \eqref{PlaneXY} we get
\begin{equation}\label{interm20}
\begin{split}
(1,-B,0)&\cdot(X(\sigma),Y(\sigma),Z(\sigma))\\&=\frac{m(m-1)^3}{(2(m-1)^2N+4m-3)\sigma+4(p-1)(m-1)N+2m+6(p-1)}\sigma\\
&+\frac{2m(m-1)^2(m+p-2)}{(2(m-1)^2N+4m-3)\sigma+4(p-1)(m-1)N+2m+6(p-1)}-(m-1)^3,
\end{split}
\end{equation}
while for the plane \eqref{PlaneYZ} we get
\begin{equation}\label{interm21}
\begin{split}
(0,D,1)&\cdot(X(\sigma),Y(\sigma),Z(\sigma))\\&=(m-1)^2\sigma^2-(m-1)[2(m-1)^2N^2+7(m-1)N-2(m+2p-5)]\sigma\\
&-4(m-1)^2(m+p-2)N^2-4(m-1)(3m+4p-7)N\\&+4mp+4p^2-12m-24p+28,
\end{split}
\end{equation}
and both scalar products in \eqref{interm20} and \eqref{interm21} are positive for $\sigma$ large, ending this step.

\medskip

\noindent \textbf{Step 6. No $\omega$-limits. The orbit from $P_2$ enters $Q_3$}. We established in Step 5 that the orbit going out of $P_2$ for $\sigma$ sufficiently large must enter the region $\mathcal{D}$ and once there, it has two alternatives: either go out of $\mathcal{D}$ in some way, or remain there and go to a stable $\omega$-limit, if any. We prove here that the latter is not possible. We infer from Lemma \ref{lem.monot} that coordinates $X$ and $Y$ are monotone along the orbit going out of $P_2$ as long as $Y>0$, thus no $\omega$-limit different from critical points can exist in the region $\mathcal{D}\cap\{Y>0\}$. Assume for contradiction that the orbit from $P_2$ ends up in a $\omega$-limit orbit lying in the region $\mathcal{D}\cap\{-Y_0<Y\leq0\}$ for some $\sigma_0>0$ sufficiently large (such that the previous steps of the proof are fulfilled). This $\omega$-limit is invariant for the flow according to \cite[Theorem 2, Section 3.2]{Pe}. Since $X$ is monotone decreasing along all the orbit going out of $P_2$, it should have a finite limit, thus

$\bullet$ either $X(\eta)\to\delta>0$ as $\eta\to\infty$ along the orbit from $P_2$, which means that the periodic $\omega$-limit is included in the plane $\{X=\delta\}$ and by invariance, it is itself a solution to the system (hence, to \eqref{SSODE} if translated into profiles). But $X=\delta$ implies
$$
f(\xi)=\delta\xi^{2/(m-1)}, \qquad {\rm for \ any} \ \xi>0,
$$
and it is easy to check by direct calculation that there are no solutions to \eqref{SSODE} having this form, reaching a contradiction.

$\bullet$ or $X(\eta)\to0$ as $\eta\to\infty$, which implies the existence of a periodic orbit lying in the plane $\{X=0\}$ and the region $\{-Y_0<Y<0\}$. But this is easy to be ruled out, for example by noticing that on such orbit
$$
\dot{Z}=(p-1)YZ\leq0,
$$
hence the component $Z$ is either decreasing or constant, leading again to a contradiction.

We thus find that the orbit going out of $P_2$ has to quit the region $\mathcal{D}$ for $\sigma$ sufficiently large, and this is mandatory to be done by crossing the plane $\{Y=-Y_0\}$, as shown in the previous steps. Then the orbit has to enter the attractor $Q_3$, as proved in Step 1.

\medskip

\noindent \textbf{Step 7. No orbits from $Q_1$ to $P_1$. Good orbits only from $P_0$}. In order to complete the proof of Part 3 of Theorem \ref{th.class}, we are left with showing that for $\sigma$ sufficiently large, no orbit going out of $Q_1$ might enter $P_1$. We in fact show that all the orbits going out of $Q_1$ end up in the region $\mathcal{D}$ introduced in \eqref{reg.large}. To this end, we recall from the Remark after Lemma \ref{lem.Q1} that the orbits going out of $Q_1$ start with $Y=1/N>Y(P_2)$ by \eqref{ineq1}. This shows that, on the one hand, on such orbits $Z>E-DY$ for any $Y\geq Y(P_2)$, since $E-DY\leq0$ if $Y\geq Y(P_2)$. On the other hand, we go back to the system \eqref{systinf1} centered in $Q_1$ and consider in variables $(y,z,w)$ the cylinder of equation
\begin{equation}\label{cyl}
-(N-2)y+w-my^2-\frac{\beta}{\alpha}yw=0,
\end{equation}
which connects $Q_1$ to $P_2$. The idea of this cylinder is given by the isocline of the reduced system \eqref{systinf1} in the invariant plane $\{z=0\}$. The direction of the flow of the system \eqref{systinf1} across this cylinder is given by the sign of
$$
T(y,z,w)=z\left[N-2+2my+\frac{\beta}{\alpha}w\right]+\left(1-\frac{\beta}{\alpha}y\right)w(2-(m-1)y)>0,
$$
provided $0\leq y\leq y(P_2)$ where $y(P_2)=2/(m-1)$ is the first coordinate of the critical point $P_2$ in variables $(y,z,w)$. Since the direction of the normal vector to the cylinder \eqref{cyl} is given by
$$
\overline{n}=\left(-(N-2)-2my-\frac{\beta}{\alpha}w,0,1-\frac{\beta}{\alpha}y\right),
$$
it follows that this cylinder cannot be crossed from inside to its exterior while $0\leq y\leq y(P_2)=2/(m-1)$. Moreover, by analyzing the matrix $M(Q_1)$ given in the proof of Lemma \ref{lem.Q1}, it follows that the orbits going out of $Q_1$ go out (in a small neighborhood of $Q_1$) with a slope given by
$$
\frac{w}{y}=\frac{1}{Y}\sim N,
$$
while the slope of the cylinder \eqref{cyl} in a neighborhood of $Q_1$ (where the linear terms dominate over the quadratic ones) is given by $w/y\sim(N-2)<N$. We thus infer that all the orbits going out of $Q_1$ are interior to the cylinder \eqref{cyl} in a small neighborhood of $Q_1$ and thus will remain in the same situation while $y\leq y(P_2)$. It is then easy to see from the expression of \eqref{cyl} and the form $X=BY+C$ of the plane \eqref{PlaneXY}, which translates into $1=By+Cw$ in terms of the variables of the system \eqref{systinf1}, that the cylinder \eqref{cyl} and thus also the orbits going out of $Q_1$ stay in the region $1>By+Cw$ (that is, $X>BY+C$) up to arriving at $Y=Y(P_2)$. We have thus shown that all the orbits going out of $Q_1$ enter the region $\mathcal{D}$ and thus for $\sigma$ sufficiently large they cannot reach $P_1$. Theorem \ref{th.exist} then gives that the only good profiles for $\sigma$ sufficiently large have to belong to orbits coming from $P_0$, as claimed.
\end{proof}
We are left with the second statement in Theorem \ref{th.class}, which is proved below.
\begin{proof}[Proof of Theorem \ref{th.class}, Part 2]
We want to show that there exists at least a value $\sigma^*>0$ such that the orbit going out of $P_2$ enters $P_1$ in the phase space associated to the system \eqref{PSsyst1}. To this end, let $\sigma>0$. We infer from Lemma \ref{lem.monot} that the $X$ coordinate is decreasing along the orbit going out of $P_2$, and the same happens for the $Y$ coordinate while $Y\geq0$. This, together with an argument completely similar to \textbf{Step 6} in the above proof, gives that the orbit going out of $P_2$ cannot enter a limit cycle: indeed, due to the monotonicity in $X$, this limit cycle should lie in the invariant plane $\{X=0\}$. Since the line $\{Y=0\}$ is invariant in the plane $\{X=0\}$ and $\dot{Z}=(p-1)YZ$, it follows that $Z$ is monotone in both half-planes $\{Y>0\}$ and $\{Y<0\}$, thus no limit cycles may exist inside $\{X=0\}$. We then infer that the orbits going out of $P_2$ must reach a critical point among $P_{\gamma}$, $P_1$ and $Q_3$. We thus define the three sets, with respect to $\sigma$
\begin{equation*}
\begin{split}
&\mathbb{A}=\{\sigma\in(0,\infty):{\rm the \ orbit \ from} \ P_2 \ {\rm enters} \ P_{\gamma}\},\\
&\mathbb{B}=\{\sigma\in(0,\infty):{\rm the \ orbit \ from} \ P_2 \ {\rm enters} \ P_{1}\}, \\
&\mathbb{C}=\{\sigma\in(0,\infty):{\rm the \ orbit \ from} \ P_2 \ {\rm enters} \ Q_3\}.
\end{split}
\end{equation*}
Since $P_{\gamma}$ is asymptotically stable for orbits coming from the positive part of the phase space and $Q_3$ is a stable node, sets $\mathbb{A}$ and $\mathbb{C}$ are open. We furthermore infer from \textbf{Step 6} and the above considerations on the orbits from $P_2$ that these two sets are also non-empty. It follows by basic topology that the set $\mathbb{B}$ is non-empty and closed, thus it contains at least an element $\sigma^*$, as claimed.
\end{proof}

\bigskip

\noindent \textbf{Acknowledgements} R. I. and A. S. are partially supported by the Spanish project PID2020-115273GB-I00.

\bibliographystyle{plain}

\begin{thebibliography}{1}

\bibitem{AdB91}
D. Andreucci and E. DiBenedetto, \emph{On the Cauchy problem and initial traces for a class of evolution equations with strongly nonlinear sources}, Ann. Scuola Norm. Sup. Pisa, \textbf{18} (1991).

\bibitem{AT05}
D. Andreucci and A. F. Tedeev, \emph{Universal bounds at the
blow-up time for nonlinear parabolic equations}, Adv. Differential
Equations, \textbf{10} (2005), no. 1, 89-120.


\bibitem{BZZ11}
X. Bai, S. Zhou and S. Zheng, \emph{Cauchy problem for fast
diffusion equation with localized reaction}, Nonlinear Anal.,
\textbf{74} (2011), no. 7, 2508-2514.

\bibitem{BL89}
C. Bandle and H. Levine, \emph{On the existence and nonexistence of
global solutions of reaction-diffusion equations in sectorial
domains}, Trans. Amer. Math. Soc., \textbf{316} (1989), 595-622.

\bibitem{BG84}
P. Baras and J. Goldstein, \emph{The heat equation with a singular potential}, Trans. Amer. Math. Soc., \textbf{284} (1984), no. 1, 121-139.

\bibitem{BK87}
P. Baras and R. Kersner, \emph{Local and global solvability of a
class of semilinear parabolic equations}, J. Differential Equations,
\textbf{68} (1987), 238-252.

\bibitem{BS19}
B. Ben Slimene, \emph{Asymptotically self-similar global solutions for Hardy-H\'enon parabolic systems}, Differ. Equ. Appl., \textbf{11} (2019), no. 4, 439-462.

\bibitem{BSTW17}
B. Ben Slimene, S. Tayachi and F. B. Weissler, \emph{Well-posedness, global existence and large time behavior for Hardy-H\'enon parabolic equations}, Nonlinear Anal., \textbf{152} (2017), 116-148.

\bibitem{CM99}
X. Cabr\'e and Y. Martel, \emph{Existence versus explosion instantan\'ee por des \'equations de la chaleur lin\'eaires avec potentiel singulier}, C. R. Acad. Sci. Paris, \textbf{329} (1999), no. 11, 973-978.

\bibitem{Carr}
J. Carr, \emph{Applications of Centre Manifold Theory}, Springer Verlag, New York, 1981.

\bibitem{CIT21a}
N. Chikami, M. Ikeda and K. Taniguchi, \emph{Well-posedness and global dynamics for the critical Hardy-Sobolev parabolic equation}, Nonlinearity, \textbf{34} (2021), no. 11, 8094-8142.

\bibitem{CIT21b}
N. Chikami, M. Ikeda and K. Taniguchi, \emph{Optimal well-posedness and forward self-similar solution for the Hardy-H\'enon parabolic equation in critical weighted Lebesgue spaces}, Nonlinear Anal., to appear, Preprint ArXiv no. 2104.14166.

\bibitem{CdPE98}
C. Cort\'azar, M. del Pino and M. Elgueta, \emph{On the blow-up set for $u_t=\Delta u^m+u^m$, $m>1$}, Indiana Univ. Math. J., \textbf{47} (1998), 541--562.

\bibitem{CdPE02}
C. Cort\'azar, M. del Pino and M. Elgueta, \emph{Uniqueness and stability of regional blow-up in a porous-medium equation}, Ann. Inst. H. Poincar\'e Analyse Non Lin\'eaire, \textbf{19} (2002), no. 6, 927--960.

\bibitem{FdP18}
R. Ferreira and A. de Pablo, \emph{Grow-up for a quasilinear heat equation with a localized reaction in higher dimensions}, Rev. Mat. Complut., \textbf{31} (2018), no. 3, 805-832.

\bibitem{FdPV06}
R. Ferreira, A. de Pablo and J. L. V\'azquez, \emph{Classification
of blow-up with nonlinear diffusion and localized reaction}, J.
Differential Equations, \textbf{231} (2006), no. 1, 195-211.

\bibitem{FT00}
S. Filippas and A. Tertikas, \emph{On similarity solutions of a heat equation with a nonhomogeneous nonlinearity}, J. Differential Equations, \textbf{165} (2000), no. 2, 468-492.


\bibitem{GU05}
Y. Giga and N. Umeda, \emph{Blow-up directions at space infinity for solutions of semilinear heat equations}, Bol. Soc. Paran. Mat., \textbf{23} (2005), 9-28.

\bibitem{GU06}
Y. Giga and N. Umeda, \emph{On blow-up at space infinity for semilinear heat equations}, J. Math. Anal. Appl., \textbf{316} (2006), 538-555.

\bibitem{GK03}
J. A. Goldstein and I. Kombe, \emph{Nonlinear degenerate prabolic equations with singular lower-order term}, Adv. Differential Equations, \textbf{8} (2003), no. 10, 1153-1192.

\bibitem{GGK05}
G. R. Goldstein, J. A. Goldstein, and I. Kombe, \emph{Nonlinear parabolic equations with singular coefficient and critical exponent}, Appl. Anal., \textbf{84} (2005), no. 6, 571-583.

\bibitem{GH}
J. Guckenheimer and Ph. Holmes, \emph{Nonlinear oscillation, dynamical systems and bifurcations of vector fields}, Applied Mathematical Sciences, vol. 42, Springer-Verlag, New York, 1990.

\bibitem{GLS}
J.-S. Guo, C.-S. Lin and M. Shimojo, \emph{Blow-up behavior for a
parabolic equation with spatially dependent coefficient}, Dynam.
Systems Appl., \textbf{19} (2010), no. 3-4, 415-433.

\bibitem{GS11}
J.-S. Guo and M. Shimojo, \emph{Blowing up at zero points of
potential for an initial boundary value problem}, Commun. Pure Appl.
Anal., \textbf{10} (2011), no. 1, 161-177.

\bibitem{GLS13}
J.-S. Guo, C.-S. Lin and M. Shimojo, \emph{Blow-up for a
reaction-diffusion equation with variable coefficient}, Appl. Math.
Lett., \textbf{26} (2013), no. 1, 150-153.

\bibitem{GS18}
J.-S. Guo, and P. Souplet, \emph{Excluding blowup at zero points of the potential by means of Liouville-type theorems},
J. Differential Equations, \textbf{265} (2018), no. 10, 4942-4964.

\bibitem{HT21}
K. Hisa and J. Takahashi, \emph{Optimal singularities of initial data for solvability of the Hardy parabolic equation}, J. Differential Equations, \textbf{296} (2021), 822-848.

\bibitem{IL13}
R. G. Iagar and Ph.~Lauren\ced{c}ot, \emph{Existence and uniqueness
of very singular solutions for a fast diffusion equation with
gradient absorption},  J. London Math. Soc., \textbf{87} (2013),
509-529.

\bibitem{ILS22}
R. G. Iagar, Ph. Lauren\ced{c}ot and A. S\'anchez, \emph{Self-similar shrinking of supports and non-extinction for a nonlinear diffusion equation with strong nonhomogeneous absorption}, Submitted (2022), Preprint ArXiv no. 2204.09307.
%

\bibitem{IMS21}
R. G. Iagar, A. I. Mu\~{n}oz and A. S\'anchez, \emph{Self-similar solutions preventing finite time blow-up for reaction-diffusion equations with singular potential}, Submitted (2021), Preprint ArXiv no. 2111.04806.

\bibitem{IMS22}
R. G. Iagar, A. I. Mu\~{n}oz and A. S\'anchez, \emph{Self-similar blow-up patterns for a reaction-diffusion equation with weighted reaction in general dimension}, Comm. Pure Appl. Analysis, \textbf{21} (2022), no. 3, 891-925.

\bibitem{IS19}
R. G. Iagar and A. S\'anchez, \emph{Blow up profiles for a quasilinear reaction-diffusion equation with weighted reaction with linear growth}, J. Dynam. Differential Equations, \textbf{31} (2019), no. 4, 2061-2094.

\bibitem{IS20}
R. G. Iagar and A. S\'anchez, \emph{Instantaneous and finite time blow-up of solutions to a reaction-diffusion equation with Hardy-type singular potential}, J. Math. Anal. Appl., \textbf{491} (2020), no. 1, paper no. 124244, 11 pages.

\bibitem{IS21a}
R. G. Iagar and A. S\'anchez, \emph{Blow up profiles for a quasilinear reaction-diffusion equation with weighted reaction}, J. Differential Equations, \textbf{272} (2021), no. 1, 560-605.

\bibitem{IS22}
R. G. Iagar and A. S\'anchez, \emph{Separate variable blow-up patterns for a reaction-diffusion equation with critical weighted reaction}, Nonlinear Anal., \textbf{217} (2022), article no. 112740, 33 pages.



\bibitem{KWZ11}
X. Kang, W. Wang and X. Zhou, \emph{Classification of solutions of
porous medium equation with localized reaction in higher space
dimensions}, Differential Integral Equations, \textbf{24} (2011),
no. 9-10, 909-922.

\bibitem{Ko04}
I. Kombe, \emph{Doubly nonlinear parabolic equations with singular lower order term}, Nonlinear Anal., \textbf{56} (2004), no. 2, 185-199.

\bibitem{La84}
A. A. Lacey, \emph{The form of blow-up for nonlinear parabolic equations}, Proc. Royal Society Edinburgh Sect. A, \textbf{98} (1984), no. 1-2, 183-202.

\bibitem{M78}
H. Matano, \emph{Convergence of solutions of one-dimensional semilinear parabolic equations}, J. Math. Kyoto Univ., \textbf{18} (1978), no. 2, 221-227.

\bibitem{MS21}
A. Mukai and Y. Seki, \emph{Refined construction of Type II blow-up solutions for semilinear heat equations with Joseph-Lundgren supercritical nonlinearity}, Discrete Cont. Dynamical Systems, \textbf{41} (2021), no. 10, 4847-4885.

\bibitem{dPS00}
A. de Pablo and A. S\'anchez, \emph{Global travelling waves in reaction-convection-diffusion equations}, J. Differential Equations, \textbf{165} (2000), no. 2, 377-413.

\bibitem{Pe}
L. Perko, \emph{Differential equations and dynamical systems. Third
edition}, Texts in Applied Mathematics, \textbf{7}, Springer Verlag,
New York, 2001.

\bibitem{Pi97}
R. G. Pinsky, \emph{Existence and nonexistence of global solutions
for $u_t=\Delta u+a(x)u^p$ in $\real^d$}, J. Differential Equations,
\textbf{133} (1997), no. 1, 152-177.

\bibitem{Pi98}
R. G. Pinsky, \emph{The behavior of the life span for solutions to
$u_t=\Delta u+a(x)u^p$ in $\real^d$}, J. Differential Equations,
\textbf{147} (1998), no. 1, 30-57.

\bibitem{Qi98}
Y.-W. Qi, \emph{The critical exponents of parabolic equations and blow-up in $\real^n$}, Proc. Royal Soc. Edinburgh A, \textbf{128} (1998), 123-136.

\bibitem{QS}
P. Quittner and Ph. Souplet, \emph{Superlinear parabolic problems.
Blow-up, global existence and steady states}, Birkhauser Advanced
Texts, Birkhauser Verlag, Basel, 2007.

\bibitem{S4}
A. A. Samarskii, V. A. Galaktionov, S. P. Kurdyumov and A. P.
Mikhailov, \emph{Blow-up in quasilinear parabolic problems}, de
Gruyter Expositions in Mathematics, \textbf{19}, W. de Gruyter,
Berlin, 1995.

\bibitem{S73}
J. Sotomayor, \emph{Generic bifurcations of dynamical systems}, in Proceedings of a Symposium Held at University of Bahia, Salvador, Brasil, Academic Press, New York, 1973, 561-582.

\bibitem{Su02}
R. Suzuki, \emph{Existence and nonexistence of global solutions of
quasilinear parabolic equations}, J. Math. Soc. Japan, \textbf{54}
(2002), no. 4, 747-792.

\bibitem{T20}
S. Tayachi, \emph{Uniqueness and non-uniqueness of solutions for critical Hardy-H\'enon parabolic equations}, J. Math. Anal. Appl. \textbf{488} (2020), no. 1, paper no. 123976, 51 pages.

\end{thebibliography}

\end{document}